\numberwithin{equation}{section}
\numberwithin{equation}{section}	
\theoremstyle{plain}			
\newtheorem{theorem}{Theorem}[section]
\newtheorem{lemma}[theorem]{Lemma}
\newtheorem{prop}[theorem]{Proposition}
\newtheorem{corollary}[theorem]{Corollary}
\theoremstyle{definition}		
\newtheorem{definition}[theorem]{Definition}
\newtheorem{rmk}[theorem]{Remark}
\newcommand{\weak}{\rightharpoonup}
\newcommand{\N}{\mathbb{N}}
\newcommand{\R}{\mathbb{R}}
\newcommand{\C}{\mathbb{C}}
\newcommand{\Cc}{\mathcal{C}}
\newcommand{\Hc}{\mathcal{H}}
\newcommand{\St}{\widetilde{S}}
\newcommand{\Gt}{\widetilde{\Gamma}}
\newcommand{\Pm}{\mathcal{P}_m}
\newcommand{\capacity}{\text{cap}_{\omega,m}}
\newcommand{\Capacity}{\text{Cap}_{\omega,m}}
\newcommand{\f}{\varphi}
\newcommand{\ind}{1{\hskip -2.5 pt}\hbox{\textsc{I}}}
\newcommand{\Lc}{\mathcal{L}}
\newcommand{\Lct}{\mathcal{L}^*}
\begin{document}

\title{Solutions to degenerate complex Hessian equations}
\author{Lu Hoang Chinh}
\date{\today}
\maketitle
\begin{abstract}
Let $(X,\omega)$ be  an $n$-dimensional compact K\"{a}hler manifold. We study degenerate  complex Hessian equations  of the  form $(\omega+dd^c\varphi)^m\wedge \omega^{n-m}=F(x,\varphi)\omega^n.$  Under some natural conditions on  $F$, this equation has a unique continuous solution. When $(X,\omega)$ is rational homogeneous we further show that the solution is H\"{o}lder continuous.
\end{abstract}
\section{Introduction}
Let $(X,\omega)$ be a compact  K\"{a}hler manifold of complex dimension $n$. Fix an integer $m$   between $1$ and  $n$, and let $d,d^c$ denote the usual real differential operators $d:=\partial+\bar{\partial}, d^c=\frac{\sqrt{-1}}{2\pi}(\bar{\partial}-\partial)$ so that $dd^c =\frac{i}{\pi}\partial\bar{\partial}.$  We are studying  degenerate complex Hessian equations of the form 
\begin{equation}\label{eq: He}
(\omega+dd^c\varphi)^m\wedge \omega^{n-m}=F(x,\varphi)\omega^n,
\end{equation}
where the density $F: X\times \R \rightarrow \R^+$ satisfies some natural integrability conditions (see Theorem A below).

The case $m=1$ corresponds to the Laplace equation and the case $m=n$
corresponds to  degenerate complex Monge-Amp\`ere equations which have been studied intensively in recent years 
(see \cite{Bl03,Bl05,Bl12,BGZ08,BK07,EGZ09,GKZ08,GZ05,GZ07,Kol98,Kol02,Kol03,Kol05}). So, equation (\ref{eq: He}) is a generalization of both Laplace and Monge-Amp\`ere equations.  
 
The non degenerate complex Hessian equation on  compact K\"{a}hler manifold, where $F(x,\varphi)=f(x)$, with $0<f\in \Cc^{\infty}(X)$, has been studied recently in \cite{H09,HMW10,Jb10,DK12}. In \cite{H09} and \cite{Jb10}, the authors independently solved this equation with a strong  additional hypothesis, assuming $(X,\omega)$ has non negative  holomorphic bisectional curvature. Later on, in \cite{HMW10} an a priori $\Cc^2$ estimate was obtained without curvature assumption. Recently, using this estimate and a blowing up analysis suggested in \cite{HMW10}, Dinew and Kolodziej solved the equation  in full generality.

Following  Blocki \cite{Bl05} we develop a potential theory for the complex Hessian equation on compact K\"{a}hler manifold.  We define the class of $(\omega,m)$-subharmonic functions which is a generalization of the class of $\omega$-plurisubharmonic functions when $m=n.$ The definition of the complex Hessian operator on bounded $(\omega,m)$-subharmonic functions is delicate due to difficulties in  regularization process.

To go around this difficulty, we introduce a capacity and use it to define the concept of quasi-uniform convergence. This allows us to define a suitable class of bounded and quasi-continuous $(\omega,m)$-subharmonic functions on which the complex Hessian operator is well defined and continuous under quasi-uniform convergence. We show that this definition coincides with the definition in the spirit of Bedford and Taylor method for the complex Monge-Amp\`ere  operator. A comparison principle and convergence results for this operator are also established.  

With these potential tools in hand, we then consider the degenerate complex Hessian equation (\ref{eq: He}). The first main result of this paper is the following:
\medskip

\noindent{\bf Theorem A.} {\it Let $(X,\omega)$ be a $n$-dimensional compact K\"{a}hler 
manifold. Fix $1\leq m\leq n$.  Let $F: X\times \R \rightarrow [0,+\infty)$ be a function satisfying the following  conditions:

 (F1) for all $x\in X$,  $t\mapsto F(x,t)$ is non-decreasing and continuous,
 
(F2) for any fixed  $t\in \R$, there exists $p>n/m$ such that the function  $x\mapsto F(x,t)$ belongs to  $L^p(X)$,

(F3) there exists $t_0\in \R$ such that $\int_X F(.,t_0)\omega^n=\int_X \omega^n.$ 

\noindent Then there exists a function $\varphi\in \Pm(X,\omega)\cap \Cc^0(X)$ , unique up to an additive constant, such that		
$$
(\omega+dd^c\varphi)^m\wedge\omega^{n-m}=F(x,\varphi)\omega^n.
$$

\noindent Moreover if $\forall x\in X, \ t\mapsto F(x,t)$ is increasing,  then the solution is unique.}
\medskip

Note that the condition (F3) is automatically satisfied if $F(.,-\infty)=0$ and $F(.,+\infty)=+\infty.$ An important particular case is the exponential function $F(x,t)=f(x)e^t.$ 

A particular case of this result has been obtained in \cite{DK11}. The key point in their proof is a domination between volume and capacity. Our main result is proved using this technique and the recent result in the smooth case \cite{DK12}.

When $(X,\omega)$ is rational homogeneous with $\omega$ being invariant under the Lie group action, we can easily regularize $(\omega,m)$-subharmonic. Adapting the techniques in \cite{EGZ09} we obtain H\"{o}lder continuity of the solution:
\medskip

\noindent{\bf Theorem B.} {\it Under the same assumption  as in Theorem A, assume further that  $(X,\omega)$ is rational homogeneous and $\omega$ is invariant under the Lie group action. Then the unique solution is H\"{o}lder continuous with exponent $0<\gamma<\frac{2(mp-n)}{mnp+2mp-2n}$.}
\medskip

When $m=n$ we get the same exponent $\gamma$  as in \cite{EGZ09}.  
\bigskip
  
\noindent{\bf Acknowledgement.} The paper is part of my Ph.D Thesis. I would like to express my deep gratitude to my advisor, Professor Ahmed Zeriahi, for sacrificing his very valuable time for me. I wish to express my sincere gratitude to Professor Vincent Guedj for his very useful suggestions and discussions to improve the paper. I also wish to say a special word of thanks to Professor S\'ebastien Boucksom for his kind invitation to IMJ and useful discussions. This paper owes much to their help and constant encouragement.
\medskip

\section{Preliminaries} 
In this section we introduce the notion of $(\omega,m)$-subharmonic functions following Blocki's  ideas  \cite{Bl05} (see also \cite{DK11}).  Using classical techniques for plurisubharmonic functions we obtain similar results.  
\subsection{Elementary symmetric functions}
First, we recall some basic properties of elementary symmetric functions (see \cite{Bl05}, \cite{CW01}, \cite{Ga59}). We use the notations in \cite{Bl05}.
Let $S_k$, $k=1,...,n$ be the $k$-elementary symmetric function, that is, for 
$\lambda=(\lambda_1,...,\lambda_n)\in \R^n$,
\begin{equation*}
S_k(\lambda)= \sum_{1\leq i_1<i_2<...<i_k\leq
n}\lambda_{i_1}\lambda_{i_2}...\lambda_{i_k}.
\end{equation*}
Let $\Gamma_k$ denote the closure of the connected component of $\{
S_k(\lambda)>0\}$ containing $(1,...,1).$ It is easy to show that
$$
\Gamma_k=\big\{\lambda\in \R^n \ \mid\  S_k(\lambda_1+t,...,\lambda_n+t)\geq 0,  \ \forall t\geq 0\big\}.
$$
and hence
$$
\Gamma_k= \big\{\lambda\in \R^n\ \mid\ S_j(\lambda)\geq 0,\ \ \forall 1\leq j\leq k\big\}.
$$
We have an obvious inclusion $\Gamma_n\subset ...\subset \Gamma_1.$\\
By G{\aa}rding \cite{Ga59} the set $\Gamma_k$ is a convex cone in $\R^n$ and
$S_k^{1/k}$ is concave on $\Gamma_k.$ 
Let $\Hc$ denote the vector space (over $\R$) of complex hermitian $n\times n$
matrices. For $A\in \Hc$ we set
$$
\St_k(A)=S_k(\lambda(A)),
$$
where $\lambda(A)\in \R^n$ are is the vector of eigenvalues of $A.$ The function $\St_k$ can also be defined as the sum of all principal minors of order $k$,
$$
\St_k(A)=\sum_{\vert I\vert =k}A_{II}.
$$
From the latter we see that $\St_k$ is a homogeneous polynomial of order $k$ on $\Hc$ which is hyperbolic with respect to the identity matrix $I$ (that is for every $A\in
\St$ the equation $\St_k(A+tI)=0$ has $n$ real roots; see \cite{Ga59}). As in
\cite{Ga59} (see also \cite{Bl05}), the cone
$$
\Gt_k:=\big\{A\in \Hc\ \mid\ \St_k(A+tI)\geq 0, \forall t\geq 0\big\}=\{A\in \Hc\ \mid\ \lambda(A)\in \Gamma_k\}
$$
is convex and the function $\St_k^{1/k}$ is concave on $\Gt_k.$

\medskip


		


\subsection{$\omega$-subharmonic functions}
In this section, we consider $\Omega\subset X$  an open subset  contained in a local chart. 
\begin{definition}\label{def: weakly subharmonic}
A function $u\in L^1(\Omega)$ is called weakly $\omega$-subharmonic if 
$$
dd^cu\wedge \omega^{n-1}\geq 0,
$$
in the weak sense of currents.
\end{definition}
\noindent Thanks to Littman \cite{Lit63} we have the following approximation properties.
\begin{prop}\label{prop: Littman}
Let $u$ be a weakly $\omega$-subharmonic function in $\Omega$. Then there exists a one parameter family of functions $u_h$ with the following properties: for every  compact subset $\Omega'\subset \Omega$

a)  $u_h$ is smooth in $\Omega'$ for $h$ sufficiently large,

b)  $dd^cu_h\wedge \omega^{n-1}\geq 0$ in $\Omega',$

c) $u_h$ is non increasing with increasing $h,$ and $\lim_{h\to \infty}u_h(x)=u(x)$ almost every where  in $\Omega',$

d) $u_h$ is given explicitly as 
\begin{equation*}\label{eq: Littman}
u_h(y)=\int_{\Omega} K_h(x,y)u(x)dx,
\end{equation*}
where $K_h$ is a smooth non negative function and 
$$
\int_{\Omega} K_h(x,y)dy \to 1,
$$
uniformly in $x\in \Omega'.$ 
\end{prop}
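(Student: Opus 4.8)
The plan is to recognise that weak $\omega$-subharmonicity is nothing but a distributional subsolution property for a single linear elliptic operator, and then to apply the classical monotone smoothing of subsolutions via kernels \emph{adapted} to that operator. \textbf{Step 1: reduction to a linear elliptic operator.} Writing $\omega=\frac{i}{2}\sum_{j,k}g_{j\bar k}\,dz^j\wedge d\bar z^k$ in the chart, a direct computation of the wedge product gives
\[
dd^c u\wedge\omega^{n-1}=c_n\Big(\sum_{j,k}g^{j\bar k}\,\frac{\partial^2 u}{\partial z^j\partial\bar z^k}\Big)\,\omega^n,\qquad c_n>0,
\]
so that $dd^c u\wedge\omega^{n-1}\ge0$ in the sense of currents is equivalent to $Lu\ge0$ in the sense of distributions, where $Lu:=\sum_{j,k}g^{j\bar k}\,\partial_j\partial_{\bar k}u$. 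On any $\Omega'\Subset\Omega$ the Hermitian matrix $(g^{j\bar k})$ is smooth and uniformly positive definite, so after passing to real coordinates $L$ is a uniformly elliptic second-order operator with smooth coefficients and \emph{no} lower-order terms; in particular $L\,1=0$, i.e.\ constants are $L$-harmonic. Thus $u$ is precisely a weakly $L$-subharmonic function in Littman's sense, and the whole statement becomes a regularisation result for subsolutions of $L$.

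\textbf{Step 2: the adapted monotone mollification.} I would produce $u_h$ by averaging $u$ against a family of smooth nonnegative kernels $K_h(x,y)$ modelled on $L$ rather than on the Euclidean Laplacian. Concretely one works with the $L$-balls $\{\Gamma_x>t\}$, the super-level sets of the fundamental solution $\Gamma_x(\cdot)=\Gamma(x,\cdot)$ of $L$ with pole at $x$, and averages $u$ over them with the weight dictated by the coarea formula for $L$; the resulting solid-average operators $A_r$ obey the sub-mean-value inequality $u\le A_r u$ together with monotonicity of $r\mapsto A_r u$, exactly because $L\,1=0$ and $Lu\ge0$. Setting $u_h:=A_{1/h}u$ (legitimate once $h$ is large enough that the $\tfrac1h$-balls over $\Omega'$ stay inside $\Omega$) then yields all four properties: positivity of $K_h$ and the normalisation $\int_\Omega K_h(x,y)\,dy\to1$ uniformly on $\Omega'$ as $h\to\infty$ (property (d)), since the kernels reproduce constants in the limit of small scale; monotonicity $u_{h+1}\le u_h$ with $u_h\downarrow u$ at every Lebesgue point, hence a.e. (property (c)), from the monotone mean-value property; preservation of subharmonicity $Lu_h\ge0$ (property (b)), because $A_r$ is built from $L$ itself; and smoothness of $u_h$ (property (a)) by differentiating under the integral sign, the kernels $K_h(\cdot,y)$ being smooth by construction.

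\textbf{Main obstacle.} The crux is to achieve monotonicity and preservation of $L$-subharmonicity \emph{simultaneously}. The naive Euclidean mollifier $u*\rho_\varepsilon$ is smooth and would be monotone for the constant-coefficient Laplacian, but for variable $g^{j\bar k}$ it only yields $L_\varepsilon u_\varepsilon\ge0$ for a perturbed frozen-coefficient operator $L_\varepsilon$, and the commutator terms arising from differentiating the coefficients destroy both $Lu_h\ge0$ and the monotonicity; this is precisely the ``regularisation'' difficulty flagged in the introduction. It is resolved by constructing kernels adapted to $L$, which is the technical heart supplied by Littman's theorem. (Conceptually one may instead run the local heat flow $\partial_t v=Lv$ and apply the maximum principle to $w=Lv$, which solves the same heat equation with $w|_{t=0}=Lu\ge0$, whence $Lv\ge0$ and so $\partial_t v\ge0$ gives the monotonicity; the delicacy of controlling $w$ on $\partial\Omega'$ is exactly what makes the solid mean-value construction, matching the kernel form in (d), the cleaner route.) Granting the adapted kernels, the verification of (a)--(d) is then routine.
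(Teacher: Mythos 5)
Your proposal is correct and takes essentially the same route as the paper, which gives no independent argument and simply quotes this statement from Littman \cite{Lit63}: weak $\omega$-subharmonicity is identified with the distributional subsolution property for the uniformly elliptic operator $L=\sum_{j,k} g^{j\bar k}\partial_j\partial_{\bar k}$ (your Step 1, where the closedness of $\omega$ is what makes $\det(g)\,L$ divergence-form and hence self-adjoint, so the current inequality and the distributional inequality $Lu\geq 0$ agree), and the adapted monotone kernel smoothing is precisely Littman's theorem. Your explicit reduction plus the appeal to Littman's construction is exactly what the paper intends.
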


\begin{definition}
A function $u$ is called $\omega$-subharmonic if it is weakly $\omega$-subharmonic  and for every $\Omega'\Subset \Omega$,  $\lim_{h\to\infty}u_h(x)=u(x), \forall x\in \Omega',$ where $u_h$ is constructed as in Proposition \ref{prop: Littman}. 
\end{definition}
\begin{rmk}
Any continuous weakly $\omega$-subharmonic function is $\omega$-subharmonic. \\
\indent If $(u_j)$ is a sequence of continuous $\omega$-subharmonic functions decreasing to $u$ and if $u\neq -\infty$ then $u$ is $\omega$-subharmonic. \\
\indent If $u$ is weakly $\omega$-subharmonic then the pointwise limit of $(u_h)$ is a $\omega$-subharmonic function.\\
\indent Let $(u_j)$ be a sequence of $\omega$-subharmonic functions and $(u_j)$ is uniformly bounded from above. Then $u:=(\limsup_j u_j)^{\star}$ is $\omega$-subharmonic. Where for a function $v$, $v^{\star}$ denotes the upper semicontinuous regularization of $v.$
\end{rmk}
\noindent The following Hartogs lemma can be proved in the same way as in the case of subharmonic functions.
\begin{lemma}\label{lem: Hartogs}
Let $u_t(x),t>0$ be a family of non positive $\omega$-subharmonic functions in $\Omega$ and $u_t$ is uniformly bounded in $L^1_{loc}(\Omega)$. Suppose that for compact subset $K$ in $\Omega$ there exists a constant $C$ such that $v(x)=[\limsup_{t\to +\infty}u_t(x)]^{\star}\leq C$ on $K.$ Then for every $\epsilon >0,$ there exists $T_{\epsilon}$ such that $u_t(x)\leq C+\epsilon$ for $t\geq T_{\epsilon}$ and $x\in K.$
\end{lemma}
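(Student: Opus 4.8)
The plan is to follow the classical proof of Hartogs' lemma, using the Littman regularization of Proposition~\ref{prop: Littman} in place of the ball sub-mean value inequality, and then using the reverse Fatou lemma together with the compactness of $K$ to turn pointwise information into uniform control. The first step is to record the sub-mean value inequality. Since each $u_t$ is $\omega$-subharmonic, Proposition~\ref{prop: Littman} produces the smooth regularizations $y\mapsto\int_\Omega K_h(x,y)u_t(x)\,dx$, which decrease to $u_t$; hence, for every $h$ and every $y$ in a fixed compact neighborhood $\Omega'$ of $K$ in $\Omega$,
\[
u_t(y)\le \int_\Omega K_h(x,y)u_t(x)\,dx .
\]
Applying this to the constant function $1$ (which is continuous and weakly $\omega$-subharmonic, hence $\omega$-subharmonic) and using $u_h\downarrow u$ shows that $\int_\Omega K_h(x,y)\,dx\downarrow 1$, so that $K_h(\cdot,y)\ge 0$ is a nonnegative approximate identity concentrating at $y$ as $h\to\infty$, as in Littman's construction.

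\emph{Reduction and contradiction.} If $C\ge 0$ there is nothing to prove, since $u_t\le 0\le C$; so assume $C<0$. Suppose the conclusion fails: there are $\epsilon_0>0$, points $x_j\in K$, and times $t_j\to+\infty$ with $u_{t_j}(x_j)>C+\epsilon_0$. By compactness, pass to a subsequence with $x_j\to x_*\in K$. For a fixed $h$, the inequality of the first step at $x_j$ gives $C+\epsilon_0< \int_\Omega K_h(x,x_j)u_{t_j}(x)\,dx$. Because $K_h$ is smooth, hence uniformly continuous in its second variable, and $(u_t)$ is uniformly bounded in $L^1_{loc}(\Omega)$, the center may be moved from $x_j$ to $x_*$ with an error tending to $0$; and since $K_h\ge 0$ and $u_{t_j}\le 0$, the reverse Fatou lemma applied to the fixed kernel $K_h(\cdot,x_*)$ yields
\[
C+\epsilon_0\le \limsup_{j\to\infty}\int_\Omega K_h(x,x_*)u_{t_j}(x)\,dx\le \int_\Omega K_h(x,x_*)v(x)\,dx ,
\]
where we used $\limsup_t u_t\le (\limsup_t u_t)^{\star}=v$.

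\emph{Closing the estimate.} The last inequality holds for every sufficiently large $h$. Letting $h\to\infty$, and using that $K_h(\cdot,x_*)$ is an approximate identity while $v$ is upper semicontinuous with $v(x_*)\le C<0$ and $\int_\Omega K_h(\cdot,x_*)\,dx\ge 1$, upper semicontinuity gives $\limsup_{h\to\infty}\int_\Omega K_h(x,x_*)v(x)\,dx\le v(x_*)\le C$. Combined with the previous step this forces $C+\epsilon_0\le C$, which is absurd. This contradiction establishes the existence of $T_\epsilon$ with $u_t\le C+\epsilon$ on all of $K$ for $t\ge T_\epsilon$.

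I expect the only genuine difficulty to be the passage from the tautological pointwise bound $\limsup_t u_t(x_*)\le v(x_*)$ to a bound uniform over $K$; this is exactly what the compactness--contradiction device accomplishes. The two technical points requiring care are the moving center $x_j\to x_*$, controlled by the smoothness of $K_h$ and the uniform $L^1_{loc}$ bound, and the correct sign in the approximate-identity estimate for the merely upper semicontinuous function $v$, which is the reason the reduction to $C<0$ is made at the outset.
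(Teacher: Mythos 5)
Your proof is correct and is essentially the argument the paper has in mind: the paper gives no proof of this lemma, saying only that it ``can be proved in the same way as in the case of subharmonic functions,'' and your compactness-plus-contradiction scheme --- sub-mean value inequality $u_t\le u_{t,h}$ from the monotone Littman regularization, reverse Fatou for fixed $h$, then $h\to\infty$ against the upper semicontinuous $v$ --- is exactly that classical Hartogs argument with Littman's kernels replacing averages over balls. The one ingredient you invoke that Proposition~\ref{prop: Littman} does not state explicitly, namely concentration of the mass of $K_h(\cdot,x_*)$ near $x_*$, is indeed a feature of Littman's construction and can in any case be deduced from the stated properties by your own trick: apply the regularization to the continuous $\omega$-subharmonic function $|x-x_*|^2$ (recall $\Omega$ lies in a chart) and use Chebyshev's inequality together with $\int_\Omega K_h(x,x_*)\,dx\to 1$.
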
 

\subsection{$(\omega,m)$-subharmonic functions}
We associate real (1,1)-forms $\alpha$ in $\C^n$ with hermitian matrices
$[a_{j\bar{k}}]$ by
$$
\alpha=\frac{i}{\pi}\sum_{j,k}a_{j\bar{k}}dz_j\wedge d\bar{z_k}.
$$
Then the canonical K\"{a}hler form $\beta$ is associated with the identity
matrix $I.$ It is easy to see that 	
\begin{equation*}
\binom{n}{k}\alpha^k\wedge \beta^{n-k}=\St_k(A)\beta^n.
\end{equation*}

\begin{definition}
Let $\alpha$ be a real $(1,1)$-form on $X$. We say that $\alpha$ is
$(\omega,m)$-positive at a given point $P\in X$ if at this point we have
$$
\alpha^k\wedge\omega^{n-k}\geq 0, \ \ \forall k=1,...,m.
$$
We say that $\alpha$ is $(\omega,m)$-positive if it is $(\omega,m)$-positive at
any point of $X.$
\end{definition}
\begin{rmk}
Locally at $P\in X$ with local coordinates $z_1,...,z_n$, we have
$$
\alpha=\frac{i}{\pi}\sum_{j,k}\alpha_{j\bar{k}}dz_j\wedge d\bar{z_k},
$$
and 
$$
\omega=\frac{i}{\pi}\sum_{j,k}g_{j\bar{k}}dz_j\wedge d \bar{z_k}.
$$
Then $\alpha$ is $(\omega,m)$-positive at $P$ if and only if the eigenvalues
$\lambda(g^{-1}\alpha)=(\lambda_1,...,\lambda_n)$ of the matrix
$\alpha_{j\bar{k}}(P)$ with respect to the matrix $g_{j\bar{k}}(P)$ is in
$\Gamma_m$. These eigenvalues are independent of any choice of local
coordinates. 
\end{rmk}
\medskip

We can show easily the following result:
\begin{prop}\label{prop: equivalent definition}
Let $\alpha\in \Lambda^{1,1}(X)$ be a real (1,1)-form on $X.$ Then $\alpha$ is
$(\omega,m)$-positive if and only if 
$$
\alpha\wedge \beta_1\wedge...\wedge \beta_{m-1}\wedge \omega^{n-m}\geq 0,
$$
for all $(\omega,m)$-positive forms $\beta_1,...,\beta_{m-1}.$
\end{prop}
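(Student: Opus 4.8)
The assertion is pointwise, so I fix $P\in X$ and choose holomorphic coordinates at $P$ with $g_{j\bar k}(P)=\delta_{jk}$, so that $\omega(P)=\beta$. Let $A,B_1,\dots,B_{m-1}\in\Hc$ be the Hermitian matrices representing $\alpha,\beta_1,\dots,\beta_{m-1}$ at $P$; by the local description of $(\omega,m)$-positivity in the Remark following the definition, a form is $(\omega,m)$-positive at $P$ precisely when its matrix lies in $\Gt_m$. Since $\gamma_1\wedge\cdots\wedge\gamma_m\wedge\omega^{n-m}$ is symmetric and multilinear in the $\gamma_i$ and reduces to $\binom{n}{m}^{-1}\St_m(C)\beta^n$ when all $\gamma_i$ equal a single form $\leftrightarrow C$, polarization yields a constant $c_{n,m}>0$ with
\[
\alpha\wedge\beta_1\wedge\cdots\wedge\beta_{m-1}\wedge\omega^{n-m}\big|_{P}=c_{n,m}\,M(A,B_1,\dots,B_{m-1})\,\beta^n ,
\]
where $M$ is the complete polarization of the hyperbolic polynomial $\St_m$, normalized by $M(C,\dots,C)=\St_m(C)$. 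Thus the proposition is, pointwise, the equivalence ``$A\in\Gt_m$'' $\Longleftrightarrow$ ``$M(A,B_1,\dots,B_{m-1})\ge0$ for all $B_i\in\Gt_m$.''

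The forward implication is then immediate from G\aa rding's inequality \cite{Ga59}: the polarization of a polynomial hyperbolic with respect to $I$ is nonnegative once all of its arguments lie in the closed hyperbolicity cone. As $A\in\Gt_m$ and each $B_i\in\Gt_m$, we obtain $M(A,B_1,\dots,B_{m-1})\ge0$, hence the mixed form is nonnegative.

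For the converse I must deduce $A\in\Gt_m$ from the assumption $M(A,B_1,\dots,B_{m-1})\ge0$ for all $B_i\in\Gt_m$. Diagonalizing $A$ and testing only against matrices $B_1=\cdots=B_{m-1}=\mathrm{diag}(\nu)$ that are diagonal in the eigenbasis of $A$, with $\nu\in\Gamma_m$, only weakens the hypothesis; moreover $A\in\Gt_m$ is equivalent to $\lambda(A)\in\Gamma_m$, and on simultaneously diagonal matrices $M$ restricts to the polarization $\tilde M$ of the elementary symmetric polynomial $S_m$ on $\R^n$. Writing $\lambda=\lambda(A)$ and using the identity $\tilde M(\lambda,\nu,\dots,\nu)=\tfrac1m\langle\nabla S_m(\nu),\lambda\rangle$, it therefore suffices to prove
\[
\langle\nabla S_m(\nu),\lambda\rangle\ge0\ \ \text{for all }\nu\in\Gamma_m\ \Longrightarrow\ \lambda\in\Gamma_m .
\]

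This last implication is the crux, and where I expect the real difficulty. The inequality $S_1(\lambda)\ge0$ drops out at once by taking $\nu=(1,\dots,1)\in\Gamma_m$, since then $\langle\nabla S_m(\nu),\lambda\rangle=\binom{n-1}{m-1}S_1(\lambda)$. The remaining, genuinely nonlinear, conditions $S_2(\lambda)\ge0,\dots,S_m(\lambda)\ge0$ come from letting $\nu$ sweep out all of $\Gamma_m$: the solution set of the family of linear inequalities $\langle\nabla S_m(\nu),\,\cdot\,\rangle\ge0$, $\nu\in\Gamma_m$, is exactly the closed convex cone $\Gamma_m$. This is the self-duality of the G\aa rding cone with respect to its polarization, and I would establish it by a contrapositive/separation argument: if $\lambda\notin\Gamma_m$ then $S_{j_0}(\lambda)<0$ for some minimal $j_0\le m$, and convexity of $\Gamma_m$ together with the description of $\partial\Gamma_m$ through the vanishing of the $S_j$ allows one to produce $\nu\in\Gamma_m$ with $\langle\nabla S_m(\nu),\lambda\rangle<0$. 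It is precisely here that G\aa rding's structure theory for hyperbolic polynomials \cite{Ga59} (see also \cite{Bl05}) is indispensable, whereas the forward direction used only the easy positivity of the polar form.
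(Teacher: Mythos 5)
The paper never actually proves this proposition---it is prefaced by ``We can show easily the following result'' and implicitly defers to G{\aa}rding \cite{Ga59} and Blocki \cite{Bl05}---so your attempt can only be measured against the standard argument those references supply. Your overall architecture is the right one, and your forward implication is complete: the pointwise reduction at $P$ (with $g_{j\bar k}(P)=\delta_{jk}$), the polarization identity $\alpha\wedge\beta_1\wedge\cdots\wedge\beta_{m-1}\wedge\omega^{n-m}=\binom{n}{m}^{-1}M(A,B_1,\dots,B_{m-1})\,\beta^n$, and G{\aa}rding's inequality for arguments in the closed hyperbolicity cone are all correct. (One small omission in your converse set-up: the hypothesis quantifies over \emph{global} $(\omega,m)$-positive forms, so to test against an arbitrary matrix $B\in\Gt_m$ at $P$ you must first exhibit a global $(\omega,m)$-positive form whose value at $P$ is $B$; a cutoff patching $\chi\beta_0+(1-\chi)K\omega$, with $\beta_0$ the constant-coefficient extension of $B$ in a chart and $K$ large, does this for $B$ in the open cone, and the closed cone follows by continuity.)

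The genuine gap is at the step you yourself call the crux. You must show: if $\langle\nabla S_m(\nu),\lambda\rangle\ge0$ for all $\nu\in\Gamma_m$, then $\lambda\in\Gamma_m$. Your proposed route---take the minimal $j_0$ with $S_{j_0}(\lambda)<0$ and use ``convexity of $\Gamma_m$ together with the description of $\partial\Gamma_m$''---is not an argument: hyperplane separation only produces \emph{some} linear functional $\ell$ nonnegative on $\Gamma_m$ with $\ell(\lambda)<0$, and the entire content of the claimed ``self-duality'' is that $\ell$ may be taken of the form $\nabla S_m(\nu)$ with $\nu\in\Gamma_m$; invoking self-duality here is invoking the very statement to be proved. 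The gap can be closed without any separation, using Euler's identity and the root structure. Suppose $\lambda\notin\Gamma_m$ and set $e=(1,\dots,1)$, $q(t):=S_m(\lambda+te)$. Then $q$ has degree $m$, positive leading coefficient, only real roots (hyperbolicity of $S_m$ with respect to $e$), and by G{\aa}rding's description of $\Gamma_m$ as the set of $\lambda$ for which all roots of $q$ are $\le 0$, its largest root $t_0$ is positive. For $\epsilon>0$ put $\nu_\epsilon:=\lambda+(t_0+\epsilon)e$, which lies in the open cone. Euler's identity $\langle\nabla S_m(\nu_\epsilon),\nu_\epsilon\rangle=m\,S_m(\nu_\epsilon)$ gives
\[
\langle\nabla S_m(\nu_\epsilon),\lambda\rangle
= m\,q(t_0+\epsilon)-(t_0+\epsilon)\,q'(t_0+\epsilon)
\le q(t_0+\epsilon)\Bigl(m-\frac{t_0+\epsilon}{\epsilon}\Bigr),
\]
where the inequality uses $q'(t_0+\epsilon)/q(t_0+\epsilon)=\sum_i (t_0+\epsilon-t_i)^{-1}\ge 1/\epsilon$, the $t_i\le t_0$ being the roots of $q$. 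For $(m-1)\epsilon<t_0$ the right-hand side is strictly negative, contradicting your hypothesis; note this computation also disposes of the multiple-root/genericity issues that a largest-root argument would otherwise have to address. With this lemma inserted your proof is complete, and it is in substance the G{\aa}rding-type argument the paper is alluding to.
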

\begin{definition}
A current $T$ of bidegree $(p,p)$ is said to be $(\omega,m)$-positive if 
$$
\alpha_1\wedge...\wedge \alpha_{n-p}\wedge T\geq 0,
$$
for all smooth $(\omega,m)$-positive (1,1)-forms $\alpha_i.$
\end{definition}
\noindent Following Blocki \cite{Bl05} we can define $(\omega,m)$-subharmonicity for non-smooth functions.

\begin{definition}\label{def: m sub non smooth}
A function $\varphi: X\rightarrow \R \cup \{-\infty\}$ is called $(\omega,m)$-subharmonic if the following conditions hold\\
\indent (i)  In any local chart $\Omega,$ given $\rho$ the local potential of $\omega$ and set $u:=\rho+\varphi$, then $u$ is $\omega$-subharmonic,  \\
\indent  (ii) for every smooth $(\omega,m)$-positive forms
$\beta_1,...,\beta_{m-1}$ we have, in the weak sense of distributions,
\[(\omega+dd^c\varphi)\wedge \beta_1\wedge...\wedge \beta_{m-1}\wedge
\omega^{n-m}\geq 0.\] 
\end{definition}

Let  $SH_m(X,\omega)$ be the set of all $(\omega,m)$-subharmonic functions on $X.$ Observe  that, by definition, any $\varphi\in SH_m(X,\omega)$ is upper semicontinuous. 

The following properties of $(\omega,m)$-subharmonic functions are easy to show.

\begin{prop}\label{prop: properties of m sub functions}
(i) A $\Cc^2$ function $\f$ is $(\omega,m)$-subharmonic if and only if the form $(\omega+dd^c\f)$ is $(\omega,m)$-positive or equivalently 
$$
(\omega+dd^c\f)\wedge(\omega+dd^cu_1)\wedge...\wedge(\omega+dd^c u_{m-1})\wedge\omega^{n-m}\geq 0,
$$
for all $\Cc^2$ $(\omega,m)$-subharmonic functions $u_1,...,u_{m-1}.$

(ii) If $\varphi,\psi\in SH_m(X,\omega)$ then $\max (\varphi,\psi)\in SH_m(X,\omega).$

(ii) If $\varphi,\psi\in SH_m(X,\omega)$ and $\lambda\in [0,1]$ then $\lambda \varphi+(1-\lambda)\psi\in SH_m(X,\omega).$

(iii) If $(\varphi_j)\subset SH_m(X,\omega)$ is uniformly bounded from above then 
$(\limsup_{j}\varphi_j)^{\star}\in SH_m(X,\omega).$
\end{prop}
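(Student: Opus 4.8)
The plan is to verify each of the four assertions in Proposition~\ref{prop: properties of m sub functions} directly from Definition~\ref{def: m sub non smooth}, reducing everything to the corresponding statements for $\omega$-subharmonic functions (which are classical) together with the algebraic characterization of $(\omega,m)$-positivity from Proposition~\ref{prop: equivalent definition}. For part (i), I would argue that for a $\Cc^2$ function the distributional inequality in condition (ii) of Definition~\ref{def: m sub non smooth} is just the pointwise statement that $\omega+dd^c\f$ is $(\omega,m)$-positive, by Proposition~\ref{prop: equivalent definition}. The equivalence with the displayed wedge-product inequality then follows because the forms $\beta_i=\omega+dd^c u_i$ attached to $\Cc^2$ $(\omega,m)$-subharmonic $u_i$ are themselves smooth $(\omega,m)$-positive forms, and conversely every smooth $(\omega,m)$-positive form is of this shape locally (being $\omega$ plus an exact $(1,1)$-form); one must also check condition (i) of the definition is automatic here since $(\omega,1)$-positivity already gives $dd^c u\wedge\omega^{n-1}\geq 0$.

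For the convexity and max statements (the two items both labelled (ii)), the plan is to handle each of the two defining conditions separately. Condition (i) is preserved under $\max$ and under convex combinations because these operations preserve $\omega$-subharmonicity: $\max$ of two $\omega$-subharmonic functions is $\omega$-subharmonic (as noted in the Remark after Proposition~\ref{prop: Littman}, using the approximating family $u_h$), and a convex combination $\lambda u_1+(1-\lambda)u_2$ satisfies $dd^c(\lambda u_1+(1-\lambda)u_2)\wedge\omega^{n-1}\ge 0$ by linearity. For condition (ii) the convex-combination case is again pure linearity of the operator $\psi\mapsto(\omega+dd^c\psi)\wedge\beta_1\wedge\cdots\wedge\beta_{m-1}\wedge\omega^{n-m}$ in $\psi$, so the inequality is inherited as a convex combination of two nonnegative currents. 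The $\max$ case is the only genuinely nontrivial point: here I would approximate, replacing $\f,\psi$ by the decreasing regularizations $\f_h,\psi_h$ from Proposition~\ref{prop: Littman}, noting that $\max(\f_h,\psi_h)$ is smooth off the coincidence set and its $(\omega,m)$-positivity at smooth points reduces to the elementary fact that the cone $\Gt_m$ is convex (hence closed under the eigenvalue-max operation via Garding's theory), then passing to the limit.

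For part (iii), the regularized upper-semicontinuous limit, the plan is to invoke the analogous closure property already recorded for $\omega$-subharmonic functions in the Remark following Proposition~\ref{prop: Littman}, which secures condition (i). For condition (ii), I would use a Hartogs-type / choquet argument: write $u=(\limsup_j\f_j)^{\star}$, observe that $u$ agrees almost everywhere with $\limsup_j\f_j$ by a negligible-set argument, and show the current inequality survives the limit because the test against fixed smooth $(\omega,m)$-positive forms $\beta_i$ is stable under the almost-everywhere monotone-type convergence supplied by the Hartogs Lemma~\ref{lem: Hartogs}.

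The main obstacle I expect is the $\max$ assertion in the first item labelled (ii): unlike convex combinations, the maximum is not linear in the function, so one cannot simply add currents, and the wedge product $(\omega+dd^c\max(\f,\psi))\wedge\beta_1\wedge\cdots\wedge\beta_{m-1}\wedge\omega^{n-m}$ must be controlled near the set $\{\f=\psi\}$ where $dd^c\max(\f,\psi)$ may carry mass. The clean way through is to reduce to the smooth case via the Littman regularizations, use that the relevant inequality is a pointwise eigenvalue condition $\lambda(g^{-1}(\omega+dd^c\cdot))\in\Gamma_m$, and then exploit that $\Gamma_m$ (equivalently $\Gt_m$) is a \emph{convex} cone so that the eigenvalues of the maximum's Hessian lie in $\Gamma_m$ wherever the function is twice differentiable; the coincidence set is handled by the fact that it is precisely where one of the two smooth pieces takes over, keeping the measure-theoretic contribution nonnegative.
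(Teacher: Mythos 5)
The paper states this proposition without proof (``easy to show''), so your attempt must stand on its own; it has two genuine gaps. The first is in part (i): your converse direction rests on the claim that every smooth $(\omega,m)$-positive form is locally of the shape $\omega+dd^cu$. This is false. Any form $\omega+dd^cu$ is $d$-closed, whereas $(\omega,m)$-positivity is a purely pointwise algebraic condition that imposes no closedness (for instance $e^{f}\omega$ is positive, hence $(\omega,m)$-positive, but is not closed for general $f$). So testing against forms attached to $\Cc^2$ $(\omega,m)$-subharmonic functions is a priori strictly weaker than testing against all smooth $(\omega,m)$-positive forms, and the equivalence needs an argument: at a fixed point $P$, every matrix in the interior of the cone $\Gt_m$ (computed with respect to $\omega(P)$) must be realized, at least approximately, as the value at $P$ of $\omega+dd^cu$ for some \emph{globally defined} $\Cc^2$ $(\omega,m)$-subharmonic $u$ --- e.g.\ a quadratic polynomial glued to a constant by a regularized maximum, using that $\Gamma_m$ is a convex cone containing the positive semidefinite cone --- after which the pointwise G{\aa}rding duality behind Proposition \ref{prop: equivalent definition} concludes. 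Testing only against forms close to $\omega$ (which is what a naive cutoff-and-rescale construction produces) is demonstrably insufficient.

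The second and more serious gap is in the maximum statement: you propose to replace $\f,\psi$ by their Littman regularizations $\f_h,\psi_h$ and to treat these as $(\omega,m)$-subharmonic. Proposition \ref{prop: Littman} only guarantees $dd^cu_h\wedge\omega^{n-1}\geq 0$, i.e.\ the $m=1$ condition; nothing gives the higher wedge inequalities for the regularized functions, and the unavailability of smooth $(\omega,m)$-subharmonic approximation is exactly the difficulty the paper emphasizes (it is the reason the class $\Pm(X,\omega)$ and the capacity are introduced). An argument founded on that regularization therefore fails at its first step. The repair is to fix the smooth test forms once and for all: writing $B=\beta_1\wedge\cdots\wedge\beta_{m-1}\wedge\omega^{n-m}$, which is a positive $(n-1,n-1)$-form by Proposition \ref{prop: equivalent definition}, condition (ii) of Definition \ref{def: m sub non smooth} says that $\f$ is a distributional subsolution of the single \emph{linear} degenerate elliptic inequality $dd^c\f\wedge B\geq-\omega\wedge B$. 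For one fixed linear operator, Littman-type approximation is legitimate (replace $\beta_i$ by $\beta_i+\epsilon\omega$ to ensure ellipticity, let $\epsilon\to0$ at the end), and the maximum of two smooth subsolutions is again a subsolution because a regularized maximum $\chi(a,b)$ is convex with $\chi_a+\chi_b=1$, so $dd^c\chi(u,v)\wedge B\geq\chi_a\,dd^cu\wedge B+\chi_b\,dd^cv\wedge B\geq-\omega\wedge B$; there is no need for your delicate (and hand-wavy) analysis of the coincidence set. Your linearity argument for convex combinations is fine. For part (iii), the phrase ``a.e.\ agreement plus Hartogs'' is not a proof: the correct route is to set $v_k=(\sup_{k\leq j\leq N}\f_j)^{\star}$, use the (now established) finite maximum property and increasing limits, and then pass to the decreasing limit $v_k\downarrow(\limsup_j\f_j)^{\star}$ in $L^1_{loc}$, where the distributional inequalities against each fixed $B$ survive.
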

\noindent Thanks to Hartogs Lemma \ref{lem: Hartogs}, the following proposition can be proved in the same way as in the case of $\omega$-plurisubharmonic function (see \cite{GZ05}). 
\begin{prop}\label{prop: compactness property}
Let $(\varphi_j)$ be a sequence of functions in $SH_m(X,\omega)$. 

(i) If $(\varphi_j)$ is uniformly bounded from above on $X$, then either
$\varphi_j$ converges uniformly to $-\infty$ or the sequence $(\varphi_j)$ is
relatively compact in $L^1(X).$

(ii) If $\varphi_j\rightarrow \varphi$ in $L^1(X)$ then 
$$
sup_X \varphi= \lim_j sup_X \varphi_j.
$$
\end{prop}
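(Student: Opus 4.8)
The plan is to reduce both statements to the local theory of $\omega$-subharmonic functions together with Hartogs' Lemma. The cornerstone is an a priori estimate comparing the supremum and the $L^1$-mean: there is a constant $C=C(X,\omega,m)$ such that every $\varphi\in SH_m(X,\omega)$ with $\varphi\not\equiv-\infty$ satisfies
\[
\sup_X\varphi - C \le \frac{1}{\int_X\omega^n}\int_X\varphi\,\omega^n \le \sup_X\varphi .
\]
The right inequality is trivial. For the left one I would cover $X$ by finitely many coordinate charts on which, by Definition~\ref{def: m sub non smooth}(i), $\varphi$ differs from an $\omega$-subharmonic function by a smooth local potential $\rho$; the sub-mean value inequality furnished by Littman's approximation (Proposition~\ref{prop: Littman}) bounds the value at a point by a local average, and a connectedness/chaining argument across overlapping charts propagates this into the global estimate, the smooth potentials contributing only bounded error terms. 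In particular, after the normalization $\sup_X\varphi=0$ the class $\{\varphi\in SH_m(X,\omega):\sup_X\varphi=0\}$ is bounded in $L^1(X)$.

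\textbf{Proof of (i).} Write $M_j:=\sup_X\varphi_j$, bounded above by hypothesis. If $M_j\to-\infty$, then $\varphi_j\le M_j$ forces $\varphi_j\to-\infty$ uniformly, which is the first alternative. Otherwise $M_j$ stays bounded, so $\psi_j:=\varphi_j-M_j$ satisfies $\sup_X\psi_j=0$ and, by the estimate above, $(\psi_j)$ is uniformly bounded in $L^1(X)$. Since each $\psi_j$ is locally $\omega$-subharmonic, the classical compactness of $L^1_{loc}$-bounded families of subharmonic functions (available here through the uniformly elliptic theory behind Proposition~\ref{prop: Littman}) shows that $(\psi_j)$, hence $(\varphi_j)=(\psi_j+M_j)$, is relatively compact in $L^1(X)$, which is the second alternative.

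\textbf{Proof of (ii).} Assume $\varphi_j\to\varphi$ in $L^1(X)$ and set $M_j=\sup_X\varphi_j$, $M=\sup_X\varphi$. By the a priori estimate and the convergence of $\int_X\varphi_j\,\omega^n$, the $M_j$ are bounded, so after subtracting a fixed constant I may assume $\varphi_j\le 0$. For the upper bound I would use the standard fact that $L^1$-convergence of locally subharmonic functions gives $(\limsup_j\varphi_j)^{\star}=\varphi$; applying Hartogs' Lemma~\ref{lem: Hartogs} on $K=X$ with ceiling $\varphi\le M$ yields $\varphi_j\le M+\varepsilon$ on $X$ for $j$ large, i.e.\ $\limsup_j M_j\le M$. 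For the lower bound, fix $\delta>0$ and let $U:=\{\varphi>M-\delta\}$; since $\varphi$ is locally $\omega$-subharmonic and $\not\equiv-\infty$, the sub-mean value inequality forces $\int_U\omega^n>0$. From $\varphi_j\le M_j$ I get $\int_U\varphi_j\,\omega^n\le M_j\int_U\omega^n$, and passing to the limit (using $L^1$-convergence) gives $(M-\delta)\int_U\omega^n\le\int_U\varphi\,\omega^n\le(\liminf_j M_j)\int_U\omega^n$, whence $\liminf_j M_j\ge M-\delta$. Letting $\delta\to 0$ and combining with the upper bound yields $\lim_j M_j=M=\sup_X\varphi$.

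The step I expect to be the main obstacle is the a priori $\sup$-versus-mean estimate: passing from the purely local sub-mean value property to a uniform global bound on the compact manifold requires the chaining argument over a finite cover and careful control of the errors coming from the local K\"ahler potentials $\rho$. Once this is in place, both assertions follow formally, (i) from standard subharmonic compactness and (ii) from Hartogs' Lemma~\ref{lem: Hartogs}; the only other point to verify carefully is the identity $(\limsup_j\varphi_j)^{\star}=\varphi$ under $L^1$-convergence, which is a classical property of (locally) subharmonic functions.
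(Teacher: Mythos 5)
Your proposal is correct and takes essentially the same route as the paper: the paper's entire proof of this proposition is the remark that, thanks to Hartogs' Lemma \ref{lem: Hartogs}, one repeats the classical arguments for $\omega$-plurisubharmonic functions from \cite{GZ05} — namely the local sub-mean value/compactness theory for (weakly) $\omega$-subharmonic functions in charts, the resulting $\sup$ versus $L^1$-mean comparison, and Hartogs' lemma — which is exactly what you carry out. The only caveat is minor and inherited from the statement itself: in (i), "not converging uniformly to $-\infty$" only gives a subsequence along which $\sup_X\varphi_j$ is bounded below, so the dichotomy (and your phrase "otherwise $M_j$ stays bounded") must be read up to extraction of subsequences, as in \cite{GZ05}.
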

\noindent The compactness result can be deduced easily from Proposition \ref{prop: compactness property}.
\begin{lemma}\label{lem: compactness}
There exists a constant $C_0>0$ such that for all $\varphi\in SH_m(X,\omega)$
satisfying $\sup_X\varphi=0$ we have
$$
\int_X \varphi \omega^n\geq -C_0.
$$
It then follows that 
$$
\Cc:= \{\varphi\in SH_m(X,\omega)\ \mid\ \sup_X\varphi\leq 0	; \int_X\varphi\omega^n\geq -C_0\}
$$
is a convex compact subset of $L^1(X).$
\end{lemma}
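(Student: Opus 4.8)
The plan is to deduce everything from the compactness dichotomy of Proposition \ref{prop: compactness property} together with the stability properties of Proposition \ref{prop: properties of m sub functions}. First I would establish the uniform integral bound by contradiction. If no constant $C_0$ worked, there would be a sequence $(\varphi_j)\subset SH_m(X,\omega)$ with $\sup_X\varphi_j=0$ and $\int_X\varphi_j\,\omega^n\to-\infty$. Since these functions are uniformly bounded above by $0$, Proposition \ref{prop: compactness property}(i) leaves exactly two possibilities. Uniform convergence to $-\infty$ is impossible because $\sup_X\varphi_j=0$ for every $j$; hence $(\varphi_j)$ is relatively compact in $L^1(X)$, so some subsequence converges in $L^1$ to a function $\varphi\in L^1(X)$. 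But then $\int_X\varphi_{j_k}\,\omega^n\to\int_X\varphi\,\omega^n$, which is finite, contradicting $\int_X\varphi_j\,\omega^n\to-\infty$. This yields the desired $C_0$.

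Convexity of $\Cc$ is then immediate: for $\varphi,\psi\in\Cc$ and $\lambda\in[0,1]$, Proposition \ref{prop: properties of m sub functions} gives $\lambda\varphi+(1-\lambda)\psi\in SH_m(X,\omega)$, while the condition $\sup_X(\lambda\varphi+(1-\lambda)\psi)\le 0$ and the bound $\int_X(\lambda\varphi+(1-\lambda)\psi)\,\omega^n\ge-C_0$ follow from subadditivity of the supremum and linearity of the integral. For compactness I would prove sequential compactness in $L^1(X)$. Given $(\varphi_j)\subset\Cc$, uniform boundedness above and Proposition \ref{prop: compactness property}(i) again force relative compactness, since the alternative $\varphi_j\to-\infty$ uniformly is excluded by the lower bound $\int_X\varphi_j\,\omega^n\ge-C_0$; thus a subsequence satisfies $\varphi_{j_k}\to\varphi$ in $L^1(X)$, and after passing to a further subsequence we may assume convergence almost everywhere. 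The conditions $\sup_X\varphi\le 0$ and $\int_X\varphi\,\omega^n\ge-C_0$ then follow from Proposition \ref{prop: compactness property}(ii) and from $L^1$ convergence, respectively.

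The step I expect to be the main obstacle is verifying that the limit $\varphi$ is again $(\omega,m)$-subharmonic, that is, that $\Cc$ is closed. Here I would set $\psi:=(\limsup_k\varphi_{j_k})^{\star}$, which lies in $SH_m(X,\omega)$ by Proposition \ref{prop: properties of m sub functions}(iii), the family being uniformly bounded above. Since $\varphi_{j_k}\to\varphi$ almost everywhere, $\limsup_k\varphi_{j_k}=\varphi$ a.e., and the upper semicontinuous regularization alters this function only on a negligible set, so $\varphi=\psi$ a.e.\ and $\varphi$ admits a representative in $SH_m(X,\omega)$. This identifies $\varphi$ as an element of $\Cc$, establishing closedness; combined with the relative compactness and convexity above, it shows that $\Cc$ is a convex compact subset of $L^1(X)$.
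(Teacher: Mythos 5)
Your proof is correct and takes essentially the approach the paper intends: the paper gives no details at all, saying only that the lemma ``can be deduced easily'' from the compactness dichotomy of Proposition \ref{prop: compactness property}, and your argument is precisely that deduction (part (i) ruling out uniform divergence to $-\infty$ both for the integral bound and for relative compactness, part (ii) for the sup condition, and Proposition \ref{prop: properties of m sub functions} for convexity and closedness via $(\limsup_k\varphi_{j_k})^{\star}$). The one fact you invoke without proof---that the upper semicontinuous regularization changes $\limsup_k\varphi_{j_k}$ only on a set of measure zero---is a standard Brelot--Cartan-type result, entirely consistent with the paper's level of detail.
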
 

\subsection{Non degenerate complex Hessian equations}
We summarize here some recent  results on the non degenerate complex Hessian equation on compact K\"{a}hler manifolds, 
\begin{equation}\label{eq: hes}
(\omega+dd^c\varphi)^m\wedge\omega^{n-m}=f\omega^n,
\end{equation}
where $0<f$ is smooth such that 
\begin{equation}\label{eq: compatibility condition}
\int_X f\omega^n=\int_X \omega^n.
\end{equation}

The following existence result was solved by Dinew and Kolodziej:
\begin{theorem}\cite{DK12}\label{thm: DK12}
 If $(X,\omega)$ is a compact K\"{a}hler manifold and  $0<f\in \Cc^{\infty}(X)$ satisfies (\ref{eq: compatibility condition}) then equation (\ref{eq: hes}) has a unique (up to an additive constant) smooth solution.
 \end{theorem}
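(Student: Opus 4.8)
The plan is to establish existence by the continuity method and to deduce uniqueness from the maximum principle. Since $\int_X f\omega^n=\int_X\omega^n$ by \eqref{eq: compatibility condition}, set $f_t:=(1-t)+tf$ for $t\in[0,1]$; each $f_t$ is smooth and strictly positive and still satisfies $\int_X f_t\omega^n=\int_X\omega^n$, while $f_0\equiv 1$ is solved by $\varphi_0\equiv 0$. Let $S\subset[0,1]$ be the set of $t$ admitting a smooth $\varphi_t$ for which $\omega+dd^c\varphi_t$ has eigenvalues in the interior of $\Gamma_m$ and $(\omega+dd^c\varphi_t)^m\wedge\omega^{n-m}=f_t\omega^n$. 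I would prove that $S$ is nonempty, open and closed, hence all of $[0,1]$, and read off a solution of \eqref{eq: hes} at $t=1$.

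Openness comes from the implicit function theorem. Because the total mass $\int_X(\omega+dd^c\varphi)^m\wedge\omega^{n-m}$ is a cohomological invariant equal to $\int_X\omega^n$, it is natural to view the operator $P(\varphi):=(\omega+dd^c\varphi)^m\wedge\omega^{n-m}/\omega^n$ as a map into $\{g:\int_X g\,\omega^n=\int_X\omega^n\}$. Linearizing $P$ at an admissible $\varphi_t$ gives
\[
L_t\dot\varphi = m\,\frac{(\omega+dd^c\varphi_t)^{m-1}\wedge dd^c\dot\varphi\wedge\omega^{n-m}}{\omega^n},
\]
whose symbol is positive definite since $\omega+dd^c\varphi_t$ lies in the interior of $\Gt_m$ (G{\aa}rding's theory), and which by Stokes sends zero-mean functions to zero-mean functions. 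As $L_t$ has no zeroth-order term, its kernel on the compact manifold consists of constants, so on the slice $\{\int_X\dot\varphi\,\omega^n=0\}$ the Fredholm alternative makes $L_t\colon\Cc^{2,\alpha}\to\Cc^{0,\alpha}$ an isomorphism onto zero-mean data, and the implicit function theorem produces solutions for $t'$ near $t$.

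The essential work is closedness, which demands a priori estimates uniform in $t$ for the solution normalized by $\sup_X\varphi_t=0$. First, a $\Cc^0$ (oscillation) bound: the $f_t$ are uniformly bounded in $L^p(X)$ for some $p>n/m$, so the pluripotential comparison between volume and capacity for $(\omega,m)$-subharmonic functions yields $\mathrm{osc}_X\varphi_t\le C$ independently of $t$. Next, the second-order estimate of Hou--Ma--Wu \cite{HMW10}, valid without any curvature hypothesis, gives $\sup_X\Delta\varphi_t\le C\bigl(1+\sup_X|\nabla\varphi_t|^2\bigr)$. The remaining and hardest step is to remove the gradient dependence: if $\sup_X|\nabla\varphi_{t_j}|\to\infty$ along some sequence, I would rescale local coordinates by this maximal gradient about points where it is attained; the Hou--Ma--Wu bound makes the rescaled functions uniformly $\Cc^{1,1}$, so they converge to a bounded entire $(\omega,m)$-subharmonic function $v$ on $\C^n$ solving the homogeneous equation $(dd^cv)^m\wedge\beta^{n-m}=0$ with $|\nabla v|(0)=1$. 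A Liouville-type theorem for bounded maximal $m$-subharmonic functions forces $v$ to be constant, a contradiction, so $\sup_X|\nabla\varphi_t|$ is uniformly bounded.

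With uniform $\Cc^0$, $\Cc^1$ and $\Cc^2$ bounds the equation is uniformly elliptic, and since $\St_m^{1/m}$ is concave on $\Gt_m$, the Evans--Krylov theorem yields a uniform $\Cc^{2,\alpha}$ estimate; Schauder bootstrapping then gives uniform $\Cc^{k}$ bounds for every $k$, so $S$ is closed and a smooth solution $\varphi_1$ exists. For uniqueness, if $\varphi,\psi$ both solve \eqref{eq: hes} then $P(\varphi)-P(\psi)=L(\varphi-\psi)=0$, where $L$ is the elliptic operator obtained by integrating the linearization along the segment from $\psi$ to $\varphi$; as $L$ has no zeroth-order term, the strong maximum principle on the compact manifold forces $\varphi-\psi$ to be constant. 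I expect the blow-up and Liouville step to be the main obstacle, as it is precisely the ingredient that allows one to discard the curvature assumption required in the earlier treatments \cite{H09,Jb10}.
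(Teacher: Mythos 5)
The paper does not prove this statement itself---it quotes it from \cite{DK12}---and your outline (continuity method; capacity-based $\Cc^0$ estimate; the Hou--Ma--Wu second-order estimate of \cite{HMW10}; a blow-up argument reducing the gradient bound to a Liouville theorem for bounded maximal $m$-subharmonic functions in $\C^n$; Evans--Krylov plus Schauder; maximum-principle uniqueness) is precisely the Dinew--Kolodziej strategy that the paper's introduction summarizes. So your proposal is correct and takes essentially the same approach as the cited proof.
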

 
 This result was known to hold when $(X,\omega)$ has non negative holomorphic bisectional curvature \cite{H09,Jb10}.
 
The complex Hessian equation in domains of $\C^n$, i.e. equations of the form 
\begin{equation*}
(dd^c u)^m\wedge \beta^{n-m}=f\beta^n,
\end{equation*}
where $\beta$ is the canonical K\"{a}hler form in $\C^n,$  was considered by Li \cite{Li04} and Blocki \cite{Bl05}.  Existence and uniqueness of smooth solution to the Dirichlet problem in smoothly bounded domains with $(m-1)$- pseudoconvex boundary  was obtained  in \cite{Li04}. In \cite{Bl05}, a potential theory for $m$-subharmonic functions  was  developed and the corresponding degenerate Dirichlet problem was solved. Recently,  Sadullaev and Abdullaev studied capacities and polar sets for $m$-subharmonic functions \cite{SA12}. Note that the corresponding problem when $\beta$ is not the euclidean K\"{a}hler form is fully open.

It is important to mention that the study of real Hessian equations is a classical subject which has been developed  previously in many papers, for example \cite{CNS85,CW01,ITW04,Kr95,La02,Tr95,TW99,Ur01,W09}.

 \section{Complex Hessian operators.}
  
 One of the key points in pluripotential theory is the smooth approximation which holds for quasi plurisubharmonic functions (\cite{BK07}, \cite{De92}). Such a result for $(\omega,m)$-subharmonic functions seems to be very difficult. To overcome this difficulty  we  work in an (a priori) restrictive class which is defined by means of uniform convergence with respect to capacity.  
\subsection{Capacity}   
\begin{definition}
Let $E\in X$ be a Borel subset. We define the inner $(\omega,m)$-capacity of $E$
by 
$$
\capacity (E):= \sup\Big\{\int_E \omega_{\varphi}^m\wedge\omega^{n-m}\ \mid\
\varphi\in SH_m(X,\omega)\cap \mathcal{C}^2(X), 0\leq \varphi\leq 1\Big\}.
$$
The outer $(\omega,m)$-capacity of $E$ is defined to be  
$$
\Capacity(E):=\inf\Big\{\capacity (U)\ \mid \ E\subset U,\ \ U\ \ \text{is an open
subset of X}\Big\}.
$$
\end{definition}
\noindent It follows directly from the definition that
 $\Capacity$ is monotone and $\sigma$-sub-additive.\\
Observe that if $\varphi\in SH_m(X,\omega)\cap \mathcal{C}^2(X)$, $0\leq \varphi\leq M$ then, for any Borel subset $E\subset X,$
\begin{equation}\label{eq: useful inequality}
\int_E\omega_{\varphi}^m\wedge \omega^{n-m}\leq M^m\capacity (E). 
\end{equation}

\begin{definition}\label{def: convergence in capacity}
A sequence $(\varphi_j)$ converges in $\capacity$ to $\varphi$ if for any $\delta>0$ we have
$$
\lim_{j\to\infty} \capacity (\vert\varphi_j-\varphi\vert>\delta)=0.
$$
\end{definition}
\begin{definition}\label{def: quasi-uniform convergence}
A sequence of functions $(\varphi_j)$ converges quasi-uniformly to $\varphi$ on
$X$ (w.r.t $\Capacity$) if for every $\epsilon>0$ there exists an open subset
$U\subset X$ such that $\Capacity (U)\leq \epsilon$ and $\varphi_j$ converges
uniformly to $\varphi$ in $X\setminus U.$
\end{definition}

\noindent This convergence is almost equivalent to the convergence in capacity as the following result shows
\begin{prop}\label{prop: equivalent convergence}
(i) If $\varphi_j$ converges quasi-uniformly to $\varphi$, then for each
$\delta>0$,
$$
\lim_{j\to\infty} \Capacity(\vert \varphi_j-\varphi\vert>\delta)=0.
$$
(ii) Conversely, assume that $(\varphi_j)$ is a sequence of functions and
$\varphi$ is a function such that, for every $\delta>0,$
$$
\lim_{j\to\infty} \Capacity (\vert\varphi_j-\varphi\vert>\delta)=0.
$$
Then there exists a subsequence $(\varphi_{j_k})$ converging quasi-uniformly to
$\varphi$. 
\end{prop}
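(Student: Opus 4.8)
The statement is the exact pluripotential analogue of the classical Egorov theorem relating convergence in measure to almost-uniform convergence, with Lebesgue measure replaced by the outer capacity $\Capacity$. The two structural facts I would lean on are the monotonicity and $\sigma$-subadditivity of $\Capacity$ recorded immediately after its definition, together with the elementary observation that for any open set $U$ one has $\Capacity(U)\leq\capacity(U)$, since $U$ is itself an admissible open superset in the infimum defining $\Capacity(U)$.

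\emph{Part (i).} Fix $\delta>0$ and $\epsilon>0$. Quasi-uniform convergence furnishes an open set $U\subset X$ with $\Capacity(U)\leq\epsilon$ on whose complement $\varphi_j\to\varphi$ uniformly. Choosing $N$ so large that $\sup_{X\setminus U}|\varphi_j-\varphi|<\delta$ for all $j\geq N$, we obtain the inclusion $\{|\varphi_j-\varphi|>\delta\}\subset U$ for every such $j$, whence monotonicity gives $\Capacity(\{|\varphi_j-\varphi|>\delta\})\leq\Capacity(U)\leq\epsilon$. As $\epsilon$ was arbitrary, the limit is $0$.

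\emph{Part (ii).} This is the Egorov-type step. For each positive integer $k$, the hypothesis $\lim_j\Capacity(\{|\varphi_j-\varphi|>1/k\})=0$ allows me to pick indices $j_k$, strictly increasing in $k$, with $\Capacity(E_k)\leq 2^{-k}$, where $E_k:=\{|\varphi_{j_k}-\varphi|>1/k\}$. Setting $F_N:=\bigcup_{k\geq N}E_k$, the $\sigma$-subadditivity yields $\Capacity(F_N)\leq 2^{-N+1}$, and on $X\setminus F_N$ one has $|\varphi_{j_k}-\varphi|\leq 1/k$ for every $k\geq N$, so the subsequence $(\varphi_{j_k})$ converges uniformly there. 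Given $\epsilon>0$, I first fix $N$ with $2^{-N+1}<\epsilon/2$; then, by the very definition of $\Capacity$ as an infimum over open supersets, I enclose $F_N$ in an open set $U\supset F_N$ with $\capacity(U)\leq\Capacity(F_N)+\epsilon/2<\epsilon$, so that $\Capacity(U)\leq\capacity(U)<\epsilon$. Since $X\setminus U\subset X\setminus F_N$, the subsequence converges uniformly on $X\setminus U$, which is precisely quasi-uniform convergence of $(\varphi_{j_k})$ to $\varphi$.

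The only genuine point of care is the final move in (ii): the tail set $F_N$ is merely a countable union of sublevel sets and need not be open, so one cannot directly use it as the set required by the definition of quasi-uniform convergence. Passing to a slightly larger open set of comparable capacity is exactly where the outer nature of $\Capacity$ is essential, and it is clean here only because the outer capacity is by construction the infimum over open neighborhoods. Everything else is the standard bookkeeping of the Egorov argument transported to the capacity setting.
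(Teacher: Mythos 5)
Your proof is correct and follows essentially the same Egorov-type argument as the paper: extract a subsequence with geometrically decaying capacities $\Capacity(E_k)\le 2^{-k}$, use $\sigma$-subadditivity on the tail union, and invoke the outer regularity of $\Capacity$ (its definition as an infimum over open supersets) to produce the required open set. The only cosmetic difference is that the paper encloses each bad set $E_j$ in an open set $U_j$ \emph{before} taking the union, whereas you take the union first and enclose it in a single open set afterwards; both steps rest on exactly the same two properties of the capacity.
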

\begin{proof}
The first part is obvious, so we only prove the second part. We can find a
subsequence (and for convenience we still denote it by $(\varphi_j)$) such that 
\[\Capacity (\vert\varphi_j-\varphi\vert>1/j)\leq 2^{-j},  \ \forall j.\]
For each $j$, let $U_j$ be an open subset of $X$ such that
$(\vert\varphi_j-\varphi\vert>1/j)\subset U_j$ and $\capacity (U_j)\leq
2^{-j+1}.$ 
Then for each $\epsilon>0$, we can find $k\in\N$ such that $\cup_{j\geq k}U_j$
is the open subset of $\Capacity$ less than $\epsilon$ and $\varphi_j$ converges
uniformly to $\varphi$ on its complement.
\end{proof}
\begin{definition}
We denote $\Pm(X,\omega)$ the set of all functions $\varphi\in SH_m(X,\omega)$
such that there exists a sequence of $\mathcal{C}^2$,
$(\omega,m)$-subharmonic functions $(\varphi_j)$  converging quasi-uniformly to
$\varphi$ on $X$. Equivalently, we can replace quasi-uniform convergence by
convergence in Capacity thanks to Proposition \ref{prop:  equivalent
convergence}.
\end{definition}

\begin{prop}
(i) Any $\varphi\in \Pm(X,\omega)$ is quasi continuous, that means, for any
$\epsilon>0$ there exists an open subset $U\subset X$ of $\Capacity$ less than
$\epsilon$ such that $\varphi$ is continuous on $X\setminus U$.\\
(ii) If $\varphi_j\downarrow \varphi$ in $\Pm(X,\omega)$ then $(\varphi_j)$
converges quasi-uniformly to $\varphi.$
\end{prop}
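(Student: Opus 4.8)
The plan is to handle the two parts separately: part (i) is essentially immediate from the definitions, while part (ii) reduces to Dini's theorem after a bookkeeping argument using $\sigma$-subadditivity of the capacity.

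For (i), I would simply unwind the definition of $\Pm(X,\omega)$. By construction there is a sequence $(\varphi_j)$ of $\Cc^2$ (hence continuous) $(\omega,m)$-subharmonic functions converging quasi-uniformly to $\varphi$. Fix $\epsilon>0$. By the definition of quasi-uniform convergence (Definition~\ref{def: quasi-uniform convergence}) there is an open set $U\subset X$ with $\Capacity(U)\leq\epsilon$ on whose complement $\varphi_j\to\varphi$ uniformly. Since each $\varphi_j$ is continuous on $X\setminus U$ and the convergence there is uniform, the limit $\varphi$ is continuous on $X\setminus U$. As $\epsilon$ was arbitrary, this is exactly quasi-continuity.

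For (ii), I would first invoke part (i): both every $\varphi_j\in\Pm(X,\omega)$ and the limit $\varphi\in\Pm(X,\omega)$ are quasi-continuous. Fix $\epsilon>0$. For each $j\geq 1$ choose an open $U_j$ with $\Capacity(U_j)\leq \epsilon\,2^{-j-1}$ such that $\varphi_j$ is continuous on $X\setminus U_j$, and an open $U_0$ with $\Capacity(U_0)\leq\epsilon/2$ such that $\varphi$ is continuous on $X\setminus U_0$. Set $G:=\bigcup_{j\geq 0}U_j$; by $\sigma$-subadditivity of $\Capacity$ we get $\Capacity(G)\leq\epsilon$, and every $\varphi_j$ and $\varphi$ is continuous on the closed, hence compact, set $K:=X\setminus G$. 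On $K$ the functions $\varphi_j$ decrease pointwise to the continuous function $\varphi$, so Dini's theorem gives uniform convergence $\varphi_j\to\varphi$ on $K$. This is precisely quasi-uniform convergence with exceptional open set $G$, which completes the argument.

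The only genuine subtlety—and where I would be most careful—is the passage in (ii) from countably many individually quasi-continuous functions to simultaneous continuity outside a single small open set; this is exactly what $\sigma$-subadditivity is needed for, and it is what guarantees that $K=X\setminus G$ is a compact set on which Dini's theorem is applicable. Once that is in place, the monotonicity hypothesis $\varphi_j\downarrow\varphi$ does all the remaining work.
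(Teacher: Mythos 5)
Your proof is correct and takes essentially the same route as the paper: part (i) follows directly from the definition of $\Pm(X,\omega)$ together with the fact that a uniform limit of continuous functions is continuous, and part (ii) combines quasi-continuity of all the $\varphi_j$ and of $\varphi$, $\sigma$-subadditivity of $\Capacity$ to produce a single small exceptional open set, and Dini's theorem on the compact complement. The only difference is presentational: you spell out the $2^{-j}$-bookkeeping that the paper compresses into one sentence.
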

\begin{proof}
The first statement follows directly from the definition. From (i) , for each
$\epsilon>0$, there exists an open subset $U$ of $\capacity$ less than
$\epsilon$ such that $\varphi_j,\varphi$ are continuous on $X\setminus  U$ which
is compact. By Dini's Theorem, $\varphi_j$ converges uniformly to $\varphi$ on
$X\setminus U.$
\end{proof}
\noindent We have obvious inclusions
$$
SH_m(X,\omega)\cap \mathcal{C}^2(X)\subset \Pm(X,\omega)\subset SH_m(X,\omega),
$$
and
$$
PSH(X,\omega)\subset \Pm(X,\omega).
$$
\begin{rmk} 
Quasi-uniform convergence implies convergence point wise outside a subset of
$\Capacity$ zero. Moreover, if $\varphi_j$ is uniformly bounded and converges quasi-uniformly to
$\varphi$, then we have convergence in $L^p$ for every $p>0$. Indeed, for any
$\epsilon>0$ and an open subset $U$ as in definition \ref{def: quasi-uniform
convergence}, we have 
\begin{eqnarray*}
\int_X \vert \varphi_j-\varphi\vert^p \omega^n&\leq&  \int_{X\setminus U} \vert
\varphi_j-\varphi\vert^p\omega^n+\int_U \vert\varphi_j-\varphi\vert^p \omega^n
\\
&\leq& \int_{X\setminus U} \vert \varphi_j-\varphi\vert^p\omega^n+ \sup_{X,j}
\vert \varphi_j-\varphi\vert^p .\capacity (U)\\
&\leq& \int_{X\setminus U} \vert \varphi_j-\varphi\vert^p\omega^n+ C\epsilon.
\end{eqnarray*}
Taking the limsup over $j$ and then letting $\epsilon \to 0$ we obtain 
$$
\limsup_j \Vert \varphi_j-\varphi\Vert_p=0.
$$
\end{rmk}
\begin{lemma}\label{prop: properties of functions in class Pm}
If $\varphi,\psi$ belong to the class $\Pm(X,\omega)$ then so does $\max
(\varphi,\psi).$
\end{lemma}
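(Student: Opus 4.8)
The plan is to produce, for $\max(\f,\psi)$, an approximating sequence of $\Cc^2$ $(\omega,m)$-subharmonic functions converging in capacity, which is exactly what membership in $\Pm(X,\omega)$ requires. Since $\f,\psi\in\Pm(X,\omega)$, I fix sequences $(\f_j)$ and $(\psi_j)$ of $\Cc^2$ $(\omega,m)$-subharmonic functions with $\f_j\to\f$ and $\psi_j\to\psi$ in $\Capacity$ (using the equivalence of quasi-uniform convergence and convergence in capacity from Proposition \ref{prop: equivalent convergence}), and I recall from Proposition \ref{prop: properties of m sub functions}(ii) that $\max(\f,\psi)\in SH_m(X,\omega)$ already, so only the approximation step remains. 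The naive candidate $\max(\f_j,\psi_j)$ is merely Lipschitz, not $\Cc^2$, so the first task is to replace the ordinary maximum by a smooth one.

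I would use the regularized maximum $M_\epsilon(a,b)$, obtained by convolving $(a,b)\mapsto\max(a,b)$ with a mollifier in the two variables. It is smooth and convex, non-decreasing in each variable, satisfies $M_\epsilon(a+t,b+t)=M_\epsilon(a,b)+t$, and obeys $\max(a,b)\le M_\epsilon(a,b)\le\max(a,b)+\epsilon$. The key lemma to establish is that $M_\epsilon(u,v)$ is a smooth $(\omega,m)$-subharmonic function whenever $u,v$ are. Writing $w=M_\epsilon(u,v)$ with $u_1=u$, $u_2=v$, one computes
$$
\omega+dd^c w=\sum_{i}(\partial_i M_\epsilon)(\omega+dd^c u_i)+\sum_{i,k}(\partial_i\partial_k M_\epsilon)\,du_i\wedge d^c u_k,
$$
using $\sum_i\partial_i M_\epsilon=1$ (from translation invariance) to split $\omega=\sum_i(\partial_i M_\epsilon)\omega$. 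Since $\partial_i M_\epsilon\ge 0$, the first sum is a convex combination of the $(\omega,m)$-positive forms $\omega+dd^c u_i$; convexity of $M_\epsilon$ makes the matrix $(\partial_i\partial_k M_\epsilon)$ positive semidefinite, so the second sum is a positive $(1,1)$-form. Both terms are $(\omega,m)$-positive, and by the convexity of the cone of $(\omega,m)$-positive forms (Proposition \ref{prop: equivalent definition} together with the G{\aa}rding convexity of $\Gt_m$) their sum $\omega+dd^c w$ is $(\omega,m)$-positive; Proposition \ref{prop: properties of m sub functions}(i) then gives that $w$ is $(\omega,m)$-subharmonic.

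With the lemma in hand, I set $w_j:=M_{\epsilon_j}(\f_j,\psi_j)$ for a sequence $\epsilon_j\downarrow 0$, so each $w_j$ is $\Cc^2$ and $(\omega,m)$-subharmonic. Combining $0\le M_{\epsilon_j}(a,b)-\max(a,b)\le\epsilon_j$ with $|\max(a,b)-\max(c,d)|\le\max(|a-c|,|b-d|)$ yields
$$
|w_j-\max(\f,\psi)|\le \epsilon_j+\max\bigl(|\f_j-\f|,\,|\psi_j-\psi|\bigr).
$$
Hence for any $\delta>0$ and all $j$ large enough that $\epsilon_j<\delta/2$, monotonicity and $\sigma$-subadditivity of $\Capacity$ give
$$
\Capacity\bigl(|w_j-\max(\f,\psi)|>\delta\bigr)\le \Capacity\bigl(|\f_j-\f|>\delta/2\bigr)+\Capacity\bigl(|\psi_j-\psi|>\delta/2\bigr).
$$
Both terms on the right tend to $0$ by the choice of $(\f_j)$ and $(\psi_j)$, so $w_j\to\max(\f,\psi)$ in $\Capacity$. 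This exhibits $\max(\f,\psi)$ as a capacity limit of $\Cc^2$ $(\omega,m)$-subharmonic functions and places it in $\Pm(X,\omega)$.

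The main obstacle is the key lemma, namely confirming that the regularized maximum preserves $(\omega,m)$-subharmonicity. This rests entirely on the convex-cone structure of the $(\omega,m)$-positive forms, and one must verify that adding the positive semidefinite Hessian term to a convex combination of $(\omega,m)$-positive forms still lands in $\Gt_m$ — which holds because the positive cone is contained in $\Gt_m$ and $\Gt_m$ is closed under addition. Once this is secured, the remaining convergence argument is a routine capacity estimate.
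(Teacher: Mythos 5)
Your proof is correct and follows essentially the same route as the paper: both replace $\max(\f,\psi)$ by a smooth maximum of the $\Cc^2$ approximants $(\f_j,\psi_j)$ and pass to the limit with respect to capacity. The only differences are minor --- the paper uses the log-sum-exp smoothing $v_j=\frac{1}{j}\log\big(e^{j\f_j}+e^{j\psi_j}\big)$ and checks quasi-uniform convergence directly on the complement of a small-capacity open set, whereas you use the convolution-regularized maximum $M_{\epsilon_j}$ and, usefully, spell out the verification (via $\sum_i\partial_i M_\epsilon=1$, convexity of $M_\epsilon$, and the convex-cone structure of $\Gt_m$) that the smooth maximum of $\Cc^2$ $(\omega,m)$-subharmonic functions is again $(\omega,m)$-subharmonic, a fact the paper's proof relies on implicitly without comment.
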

 
 \begin{proof}
 Let $(\varphi_j), (\psi_j)$ be uniformly bounded sequences of functions in
$SH_m(X,\omega)\cap \mathcal{C}^2(X)$ converging quasi-uniformly to
$\varphi,\psi$ respectively. 
 Set 
 $$u:=\max(\varphi,\psi); \ u_j:= \max(\varphi_j,\psi_j); \ v_j:=
\frac{1}{j}\log(e^{j\varphi_j}+e^{j\psi_j}).$$
 For each $\epsilon>0$ there exists an open subset $U$ of $\capacity$ less than
$\epsilon$ and $\varphi_j,\psi_j$ converges uniformly on $X\setminus U$ to
$\varphi,\psi$ respectively. Since $u_j\leq v_j\leq \log(2)/j+u_j$ and $u_j$
converges uniformly to $u$ on $X\setminus U$ we deduce that $v_j$ converges
uniformly to $u$ on $X\setminus U.$
 \end{proof}

\subsection{Hessian measure}
In this section we define complex Hessian measure for functions in $SH_m(X,\omega)$ which can be approximated in $\Capacity$ by $\Cc^2$-functions in $SH_m(X,\omega)$. In particular, for functions in $\Pm(X,\omega)\cap L^{\infty}(X)$ this notion of Hessian measure can be defined by Bedford-Taylor's method.
\begin{theorem}\label{thm: hessian measure}
Let $\varphi\in SH_m(X,\omega)$ such that there exists a  uniformly bounded sequence $(\varphi_j)$ of $\Cc^2$ $(\omega,m)$-subharmonic functions converging in $\capacity$ to $\varphi.$ Then the sequence of measures $$H_m(\varphi_j):=
(\omega+dd^c\varphi_j)^m\wedge \omega^{n-m}$$ converges (weakly in the sense of measures) to a positive Radon measure $\mu.$ Moreover, the measure $\mu$ does not depend on the choice of the approximating sequence $(\varphi_j).$  We define the Hessian measure of $\varphi$ to be $H_m(\varphi):=\mu.$
\end{theorem}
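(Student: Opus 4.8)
The plan is to follow the Bedford--Taylor strategy, adapted to the Hessian setting, in two stages: first produce weakly convergent subsequences by a mass bound, then upgrade to convergence of the whole sequence and independence of the limit by a telescoping identity combined with capacity estimates. First I would record the mass bound: since $\omega+dd^c\varphi_j$ is cohomologous to $\omega$, Stokes' theorem on the compact manifold $X$ gives $\int_X H_m(\varphi_j)=\int_X(\omega+dd^c\varphi_j)^m\wedge\omega^{n-m}=\int_X\omega^n$ for every $j$. Thus $(H_m(\varphi_j))$ is a sequence of positive measures of constant total mass, hence weak-$\ast$ relatively compact by Banach--Alaoglu, and every weak limit is a positive measure of total mass $\int_X\omega^n$. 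The entire content of the theorem is therefore to show this limit is \emph{unique}: the full sequence converges, and the limit is unchanged if $(\varphi_j)$ is replaced by another uniformly bounded $\mathcal{C}^2$ sequence converging to $\varphi$ in $\capacity$.

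The main tool is a telescoping identity. Writing $\omega_\psi:=\omega+dd^c\psi$, for two $\mathcal{C}^2$ $(\omega,m)$-subharmonic functions $u,v$ one has
\[
H_m(u)-H_m(v)=dd^c(u-v)\wedge T_{u,v},\qquad T_{u,v}:=\sum_{l=0}^{m-1}\omega_u^{\,l}\wedge\omega_v^{\,m-1-l}\wedge\omega^{n-m},
\]
because $\omega_u-\omega_v=dd^c(u-v)$ and forms of even degree commute. Here $T_{u,v}$ is a $d$-closed, $(\omega,m)$-positive current. For a fixed smooth test function $\chi$, integrating by parts twice (legitimate since $\chi$ is smooth, $u,v$ are $\mathcal{C}^2$, $T_{u,v}$ is closed, and $X$ has no boundary) yields
\[
\int_X\chi\,(H_m(u)-H_m(v))=\int_X(u-v)\,dd^c\chi\wedge T_{u,v}.
\]
Choosing $C_\chi$ with $-C_\chi\,\omega\le dd^c\chi\le C_\chi\,\omega$ and using positivity of $T_{u,v}$, I bound the right-hand side by $C_\chi\int_X|u-v|\,\omega\wedge T_{u,v}$, where $\omega\wedge T_{u,v}$ is a positive measure whose total mass equals $m\int_X\omega^n$ by the same cohomological computation.

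It remains to estimate $\int_X|u-v|\,\omega\wedge T_{u,v}$ with $u=\varphi_j$, $v=\varphi_k$. Fixing $\delta>0$ I split over $\{|u-v|\le\delta\}$, where the integral is at most $\delta\,m\int_X\omega^n$, and over $\{|u-v|>\delta\}$. On the latter set I bound $|u-v|\le 2M$ (the uniform bound) and invoke a Chern--Levine--Nirenberg type inequality: there is a constant $C=C(M)$ such that for every Borel set $E$ and every $0\le l\le m-1$,
\[
\int_E\omega_u^{\,l}\wedge\omega_v^{\,m-1-l}\wedge\omega^{n-m+1}\le C\,\capacity(E).
\]
Granting this, the second piece is controlled by $C'\,\capacity(\{|\varphi_j-\varphi_k|>\delta\})$. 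Letting $j,k\to\infty$, so this capacity tends to $0$ by convergence in capacity, and then $\delta\to0$ shows that $j\mapsto\int_X\chi\,H_m(\varphi_j)$ is Cauchy; since masses are bounded and smooth $\chi$ are dense in $\mathcal{C}^0(X)$, the full sequence $H_m(\varphi_j)$ converges weakly to a single positive measure $\mu$. Running the same estimate with $u=\varphi_j$, $v=\psi_j$ for two approximating sequences proves independence of the limit, since $\capacity(\{|\varphi_j-\psi_j|>\delta\})\to0$.

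The hard part is the Chern--Levine--Nirenberg inequality for the \emph{mixed} products above: inequality (\ref{eq: useful inequality}) is stated only for the pure power $\omega_\varphi^m\wedge\omega^{n-m}$, and extending it to $\omega_u^{\,l}\wedge\omega_v^{\,m-1-l}\wedge\omega^{n-m+1}$ requires peeling off one potential at a time by integration by parts while keeping all intermediate currents $(\omega,m)$-positive. This is exactly where G\aa rding's theory (convexity of $\Gamma_m$ and concavity of $S_m^{1/m}$ on $\Gamma_m$) enters, guaranteeing positivity of the mixed forms so that the supremum defining $\capacity$ can be applied term by term.
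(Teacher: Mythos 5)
Your proof is correct and is essentially identical to the paper's: the same mass bound and weak-$\ast$ compactness, the same telescoping current $T$, the same double integration by parts against a test function, and the same splitting of $X$ into the set where $\vert\varphi_j-\varphi_k\vert>\delta$ (controlled by capacity) and its complement. The mixed Chern--Levine--Nirenberg bound that you single out as the hard part is exactly what the paper also uses, silently, by invoking (\ref{eq: useful inequality}) for the mixed current $\omega\wedge T$; for the record it follows not by integration by parts but by convexity: $\omega_u^l\wedge\omega_v^{m-1-l}\wedge\omega^{n-m+1}\leq 2^{m-1}\omega_{(u+v)/2}^{m-1}\wedge\omega^{n-m+1}$ since all cross terms in the expansion of $(\omega_u+\omega_v)^{m-1}$ are $(\omega,m)$-positive, and the right-hand side is in turn dominated by a constant times the pure power $\omega_h^{m}\wedge\omega^{n-m}$ with $h$ a convex combination of $(u+v)/2$ and the zero potential, which after rescaling by the uniform bound becomes a competitor in the definition of $\capacity$.
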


\begin{proof}
Since all the measures $H_m(\varphi_j)$ have uniformly bounded mass (which is
$\int_X\omega^n$), they stay in a weakly compact subset. It suffices to show
that all accumulation points of this sequence are just the same. To do this it
is enough to show that for  every test function $\chi\in
\mathcal{C}^{\infty}(X),$ 
\[\lim_{j,k\to \infty}\int_X\chi [H_m(\varphi_j)-H_m(\varphi_k)]=0.\]
By integration by part formula we have
\begin{eqnarray}\label{eq: hessian measure 1}
\int_X\chi [H_m(\varphi_j)-H_m(\varphi_k)]&=&\int_X \chi
dd^c(\varphi_j-\varphi_k)\wedge T\\
&=&\int_X (\varphi_j-\varphi_k)dd^c\chi\wedge T,\nonumber
\end{eqnarray}
where
$$T=\sum_{l=0}^{m-1}(\omega+dd^c\varphi_j)^l\wedge(\omega+dd^c\varphi_k)^{m-1-l}
\wedge \omega^{n-m}.$$
Fix $\epsilon>0$, and set $U=U(j,k,\epsilon)=\{\vert \varphi_j-\varphi_k\vert \geq \epsilon\}.$ By $C$ we will denote a constant that does not depend on $j,k,\epsilon.$ Then by (\ref{eq: hessian measure 1}) and (\ref{eq: useful
inequality}) there exists $C>0$ such that
\begin{eqnarray*}\label{eq: hessian measure 2}
\Big\vert\int_X\chi [H_m(\varphi_j)-H_m(\varphi_k)]\Big\vert &\leq& \int_U
\vert\varphi_j-\varphi_k\vert C\omega\wedge T+\int_{X\setminus U}
\vert\varphi_j-\varphi_k\vert C\omega\wedge T\nonumber\\
&\leq&  C\capacity (U)+C\epsilon\sup_{X\setminus U} \vert\varphi_j-\varphi_k\vert\int_{X\setminus U}\omega\wedge T.\\
\end{eqnarray*}
Now, it follows that 
$$
\limsup_{j,k\to \infty}\Big\vert\int_X\chi [H_m(\varphi_j)-H_m(\varphi_k)]\Big\vert \leq C\epsilon.
$$
The result follows by letting $\epsilon\downarrow 0.$ For the independence in the choice of the sequence it is enough to repeat the above arguments. 
\end{proof}

\begin{lemma}\label{lem: capacity non smooth}
Let $U\subset X$ be an open subset and $\f$ be a bounded function in $\Pm(X,\omega).$ Then 
$$
\int_U H_m(\f)\leq 2(\sup_X \vert \f\vert+1)\Capacity (U).
$$
\end{lemma}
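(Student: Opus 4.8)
The plan is to establish the inequality first for smooth $\f$ and then transport it to the whole class $\Pm(X,\omega)$ by approximation. Since $\capacity$ is monotone, for an open set one has $\Capacity(U)=\capacity(U)$, so it suffices to estimate $\int_U H_m(\f)$ by $\capacity(U)$. Given a bounded $\f\in\Pm(X,\omega)$, I would choose a uniformly bounded sequence $(\f_j)\subset SH_m(X,\omega)\cap\Cc^2(X)$ converging in $\capacity$ to $\f$; by Theorem \ref{thm: hessian measure} the measures $H_m(\f_j)$ converge weakly to $H_m(\f)$, and since $U$ is open the mass is lower semicontinuous, so $\int_U H_m(\f)\le\liminf_j\int_U H_m(\f_j)$. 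This reduces everything to the smooth estimate, modulo the bookkeeping that $\sup_X|\f_j|$ can be taken as close to $\sup_X|\f|$ as needed, the harmless discrepancy being absorbed into the $+1$.

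For smooth $\f$ I would first record the elementary test-function inequality. Writing $M:=\sup_X|\f|$ and setting $v:=\frac{M+1+\f}{2(M+1)}$, one checks $0\le v\le 1$ and that $v$ is $(\omega,m)$-subharmonic, because $\omega+dd^cv=\bigl(1-\tfrac{1}{2(M+1)}\bigr)\omega+\tfrac{1}{2(M+1)}(\omega+dd^c\f)$ is a convex combination of $(\omega,m)$-positive forms. Thus $v$ is admissible in the definition of $\capacity$, and the pointwise form inequality $\omega+dd^c\f\le 2(M+1)(\omega+dd^cv)$ holds since the difference equals $(2M+1)\omega\ge0$. Applying this on a single factor gives $H_m(\f)\le 2(M+1)\,(\omega+dd^cv)\wedge(\omega+dd^c\f)^{m-1}\wedge\omega^{n-m}$, which reduces the estimate to controlling the mixed mass $\int_U(\omega+dd^cv)\wedge(\omega+dd^c\f)^{m-1}\wedge\omega^{n-m}$ by $\capacity(U)$.

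To obtain a bound \emph{linear} in $M$, rather than the $m$-th power produced by iterating the form inequality on all factors, the plan is to slice the range of $\f$. After shifting by a constant, which leaves $H_m(\f)$ unchanged, assume $0\le\f\le N$ with $N\le 2M+1$, and split $U$ into bands $U\cap\{j<\f<j+1\}$. On each open band the Hessian measure is local and coincides with that of $\max(\f-j,0)$, a function confined to $(0,1)$ there, so one expects $\int_{U\cap\{j<\f<j+1\}}H_m(\f)\le\capacity(U)$; summing the at most $N\le 2M+1$ contributions and discarding the negligible level sets then yields $\int_U H_m(\f)\le 2(M+1)\capacity(U)$.

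The hard part will be precisely this per-band estimate. The function $\min(\max(\f-j,0),1)$ that would clip $\f$ to one unit band is \emph{not} $(\omega,m)$-subharmonic, since taking a minimum destroys $(\omega,m)$-subharmonicity, so it cannot be inserted into the definition of $\capacity(U)$ directly. I would circumvent this by comparing $\f$ on the band against a barrier of the form $w+j$, with $w$ an admissible test function adapted to $U$, and invoking the comparison principle for $H_m$ together with the locality of the Hessian measure on the open set $\{j<\f<j+1\}$; this replaces the forbidden clipping by a genuine comparison and produces one capacity term per band. The remaining issues—arranging $\sup_X|\f_j|\to\sup_X|\f|$ in the approximation and choosing the slicing heights generically so that the level sets carry no $H_m$-mass—are routine and absorbed into the additive constant.
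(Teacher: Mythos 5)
Your first two paragraphs are correct and, taken together, are essentially the paper's own proof. The paper likewise takes a uniformly bounded sequence $(\f_j)\subset SH_m(X,\omega)\cap \Cc^2(X)$ with $-\sup_X\vert\f\vert-1\leq \f_j\leq \sup_X\vert\f\vert+1$ converging quasi-uniformly to $\f$, uses Theorem \ref{thm: hessian measure} and lower semicontinuity of mass on the open set $U$ to get $\int_U H_m(\f)\leq\liminf_j\int_U H_m(\f_j)$, and then applies the smooth estimate (\ref{eq: useful inequality}) to the shifted functions $\f_j+\sup_X\vert\f\vert+1\in\bigl[0,2(\sup_X\vert\f\vert+1)\bigr]$. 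Note, however, what that estimate actually produces: the constant $\bigl[2(\sup_X\vert\f\vert+1)\bigr]^m$, i.e.\ precisely your single-factor inequality $\omega+dd^c\f\leq 2(M+1)(\omega+dd^cv)$ iterated over all $m$ factors. The linear constant displayed in the statement of the lemma should be read as a misprint for this $m$-th power (compare Lemma \ref{lemma: capacity}, whose constant is $(1+M)^m$); every later application of the lemma needs only that $H_m(\f)$ is dominated by $\Capacity$ with a constant controlled by $\Vert\f\Vert_{L^{\infty}(X)}$.

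The second half of your proposal is where the genuine gap lies, and it cannot be repaired: the per-band estimate $\int_{U\cap\{j<\f<j+1\}}H_m(\f)\leq \capacity(U)$, which you defer as ``the hard part'' and never prove, is false for $m\geq 2$, so no comparison-principle or barrier argument can deliver it. The obstruction is exactly the one you flag yourself: $\max(\f,j)-j$ lies in $(0,1)$ only on the band, capacity estimates require a \emph{global} $L^\infty$ bound on the test function, and clipping destroys $(\omega,m)$-subharmonicity. To see that the statement itself fails, take $m=n\geq 2$, $X=\mathbb{P}^n$, and $\f=\max(\phi,-M)$, where $\phi\in PSH(X,\omega)$ is the standard function with a logarithmic pole at a point $x_0$ satisfying $(\omega+dd^c\phi)^n=0$ off $x_0$; then $\f\in\Pm(X,\omega)$ since $PSH(X,\omega)\subset\Pm(X,\omega)$. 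Off the level set $\{\f=-M\}$ the function $\f$ coincides with $\phi$ or with a constant, so $H_n(\f)$ carries all of the total mass $\int_X\omega^n$ on $\{\f=-M\}$ except for the $\omega^n$-volume of $\{\phi<-M\}$, which is $O(e^{-2nM})$; in particular a mass comparable to $1$ lands inside whichever single band contains the value $-M$, no matter how generically you choose the slicing heights. On the other hand, if $U$ is the coordinate ball of radius $e^{-M+1}$ around $x_0$, the standard extremal-function computation (cf.\ \cite{GZ05}) gives $\Capacity(U)=O(M^{-n})$, so $(1+M)\Capacity(U)=O(M^{1-n})\to 0$: both your per-band claim and the linear bound it was meant to yield are violated. (For $2\leq m<n$ the same phenomenon occurs with $\phi$ replaced by a truncated $m$-Hessian profile $-\vert z\vert^{2-2n/m}$.) The correct repair is simply to keep the $m$-th power: iterating your factor inequality gives $H_m(\f_j)\leq\bigl[2(M+1)\bigr]^m(\omega+dd^cv_j)^m\wedge\omega^{n-m}$ with $v_j$ admissible in the definition of $\capacity$, hence $\int_U H_m(\f_j)\leq\bigl[2(M+1)\bigr]^m\capacity(U)$, and your first paragraph then transports this estimate to every bounded $\f\in\Pm(X,\omega)$.
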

\begin{proof}
Let $\f_j$ be a sequence of $\Cc^2$ functions in $SH_m(X,\omega)$ converging quasi uniformly to $\f.$ We can assume that 
$$
-\sup_X \vert \f\vert-1 \leq \f_j\leq \sup_X \vert \f\vert +1, \ \forall j.
$$
Then 
$$
\int_U H_m(\f)\leq \liminf_{j\to+\infty}\int_U H_m(\f_j)\leq 2(\sup_X \vert \f\vert+1)\Capacity (U).
$$
\end{proof}
\noindent For functions in $\Pm(X,\omega)\cap L^{\infty}(X)$ we can also define the Hessian measure in a weak sense following Bedford-Taylor method. 
\begin{lemma}
Let $\varphi_1, \varphi_2\in \Pm(X,\omega)\cap L^{\infty}(X)$. Then the current
$\omega_{\varphi_1}\wedge \omega_{\varphi_2}\wedge \omega^{n-m}$ is well defined
in the weak sense (Bedford-Taylor), symmetric and $(\omega,m)$-positive. Then we can define inductively the Hessian measure of $\varphi\in \Pm(X,\omega)\cap L^{\infty}(X)$,
$$
H_m(\varphi):=(\omega+dd^c\varphi)^m\wedge \omega^{n-m}.
$$
Moreover, this definition coincides with the one in Theorem \ref{thm: hessian measure}.
\end{lemma}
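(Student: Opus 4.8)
The plan is to transport the Bedford--Taylor inductive construction to the $(\omega,m)$-setting and then to identify the resulting object with the measure produced by Theorem \ref{thm: hessian measure}. The first thing I would record is the positivity that makes the induction possible. For \emph{smooth} $(\omega,m)$-subharmonic $u_1,\dots,u_j$ with $j\le m$, the polarized form of G{\aa}rding's inequality \cite{Ga59} (the mixed analogue of Proposition \ref{prop: properties of m sub functions}(i), cf. Proposition \ref{prop: equivalent definition}) shows that the mixed current
$$S_j:=\omega_{u_1}\wedge\cdots\wedge\omega_{u_j}\wedge\omega^{n-m}$$
is $(\omega,m)$-positive: wedging it with any $m-j$ further smooth $(\omega,m)$-positive $(1,1)$-forms yields a nonnegative measure. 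Taking all those forms equal to $\omega$ shows the trace $S_j\wedge\omega^{m-j}\ge 0$ is a positive measure, whence each such current is of order zero with measure coefficients and locally finite mass. This is the one place where the restriction to at most $m$ factors (and a single $\omega^{n-m}$) is essential, and it is the feature distinguishing the Hessian case from the Monge--Amp\`ere case.

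Next I would set up the definition itself. Given $\varphi_1,\dots,\varphi_m\in\Pm(X,\omega)\cap L^{\infty}(X)$ I define inductively
$$\omega_{\varphi_1}\wedge\cdots\wedge\omega_{\varphi_j}\wedge\omega^{n-m}:=\omega\wedge S_{j-1}+dd^c\big(\varphi_j\,S_{j-1}\big),\qquad S_{j-1}:=\omega_{\varphi_1}\wedge\cdots\wedge\omega_{\varphi_{j-1}}\wedge\omega^{n-m}.$$
By the previous step $S_{j-1}$ has measure coefficients and, since $\varphi_j$ is bounded and Borel, $\varphi_j\,S_{j-1}$ is a well-defined current of order zero, so $dd^c(\varphi_j\,S_{j-1})$ makes sense. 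To see the product is independent of the capacity-zero ambiguity in the values of $\varphi_j$, I would use the quasi-continuity of elements of $\Pm(X,\omega)$ together with the fact that the trace measures of the $S_{j-1}$ charge no set of vanishing outer capacity; the latter is obtained by the same argument as Lemma \ref{lem: capacity non smooth}. The two-factor current in the statement is the instance $j=2$, and the diagonal $\varphi_1=\cdots=\varphi_m=\varphi$ gives $H_m(\varphi)$.

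For symmetry and positivity I would argue as follows. Positivity of the product is inherited from the smooth case: approximate each $\varphi_i$ in $\capacity$ by a uniformly bounded sequence of $\mathcal{C}^2$ $(\omega,m)$-subharmonic functions, for which the product is genuinely $(\omega,m)$-positive, and pass to the limit. Symmetry I would prove directly on the compact manifold $X$, which has no boundary. For a smooth test function $\chi$, a direct computation (expanding both orderings and cancelling the terms containing $\omega$) gives
$$\int_X\chi\,\omega_{\varphi_1}\wedge\omega_{\varphi_2}\wedge\omega^{n-m}-\int_X\chi\,\omega_{\varphi_2}\wedge\omega_{\varphi_1}\wedge\omega^{n-m}=\int_X\big(\varphi_1\,dd^c\varphi_2-\varphi_2\,dd^c\varphi_1\big)\wedge dd^c\chi\wedge\omega^{n-m},$$
and the right-hand side vanishes because, after one more integration by parts (no boundary terms since $\partial X=\varnothing$), both $\int_X\varphi_1\,dd^c\varphi_2\wedge dd^c\chi\wedge\omega^{n-m}$ and $\int_X\varphi_2\,dd^c\varphi_1\wedge dd^c\chi\wedge\omega^{n-m}$ equal $-\int_X d\varphi_1\wedge d^c\varphi_2\wedge dd^c\chi\wedge\omega^{n-m}$. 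Running the same identity with a general $(\omega,m)$-positive current in place of $\omega^{n-m}$ propagates symmetry through the induction.

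Finally, the coincidence with Theorem \ref{thm: hessian measure}, which I expect to be the main obstacle: it amounts to continuity of the inductively defined product under convergence in capacity of uniformly bounded sequences. I would prove, by induction on the number of approximated factors, that if $\varphi_i^{(k)}\to\varphi_i$ in $\capacity$ with a uniform $L^{\infty}$ bound then the products converge weakly; the inductive step is exactly the integration-by-parts estimate of Theorem \ref{thm: hessian measure}, splitting $X$ into $\{|\varphi^{(k)}-\varphi|\ge\epsilon\}$ and its complement and controlling the first piece through \eqref{eq: useful inequality} and Lemma \ref{lem: capacity non smooth}. Choosing the $\varphi_i^{(k)}$ to be the defining $\mathcal{C}^2$ approximants of the $\varphi_i$, the Bedford--Taylor product then equals the weak limit of the smooth Hessian measures, i.e. the measure $\mu=H_m(\varphi)$ of Theorem \ref{thm: hessian measure}. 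The delicate point throughout is that these capacity estimates require \emph{all} the intermediate mixed currents to be $(\omega,m)$-positive with controlled mass, so the positivity established in the first step is what the entire argument rests on.
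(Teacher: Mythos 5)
Your proposal follows, in all but one place, the same route as the paper's proof: the product is defined inductively in the Bedford--Taylor way via $dd^c(\varphi_j S_{j-1})$, positivity and mass control come from the smooth (G{\aa}rding) case as in Proposition \ref{prop: properties of m sub functions}(i), and the identification with the measure of Theorem \ref{thm: hessian measure} is obtained by proving weak continuity of the product along uniformly bounded $\mathcal{C}^2$ approximants converging in $\capacity$, using the definitional integration by parts $\int_X\chi\wedge dd^c(\varphi^{(k)}_j-\varphi_j)\wedge S=\int_X(\varphi^{(k)}_j-\varphi_j)\,dd^c\chi\wedge S$ together with the splitting of $X$ into $\{|\varphi^{(k)}_j-\varphi_j|\ge\epsilon\}$ and its complement, controlled by (\ref{eq: useful inequality}) and Lemma \ref{lem: capacity non smooth}. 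This is exactly the paper's argument.

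The one genuine divergence is your symmetry argument, and there you have a gap. The paper obtains symmetry (and $(\omega,m)$-positivity) for free from the approximation step: the smooth products $\omega_{\varphi_1^j}\wedge\omega_{\varphi_2^j}\wedge\omega^{n-m}$ are symmetric simply because wedge products of smooth forms commute, and the weak convergence --- which you also establish --- transfers this to the limit. You instead try to prove symmetry directly at the non-smooth level, and your computation hinges on the claim that $\int_X\varphi_1\,dd^c\varphi_2\wedge dd^c\chi\wedge\omega^{n-m}$ equals $-\int_X d\varphi_1\wedge d^c\varphi_2\wedge dd^c\chi\wedge\omega^{n-m}$ ``after one more integration by parts.'' For merely bounded, quasi-continuous $\varphi_i\in\Pm(X,\omega)$ this step is not available at this stage of the theory: the current $d\varphi_1\wedge d^c\varphi_2\wedge dd^c\chi\wedge\omega^{n-m}$ has not been defined (for non-smooth bounded functions it must itself be constructed, e.g.\ through $dd^c(\varphi_1\varphi_2)$ or through approximation, and shown to have order zero), and integration by parts for such functions is precisely the kind of statement (Theorem \ref{thm: integration by parts}) that the paper derives \emph{from} the convergence machinery built on this lemma, not prior to it. As written, this part of your argument is circular. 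The repair costs nothing and uses only what you already have: drop the direct computation and read symmetry off the smooth approximants through your own convergence step, exactly as the paper does.
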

\begin{proof}
It follows from definition of $(\omega,m)$-subharmonic functions that
$T_1=(\omega+dd^c\varphi_1)\wedge\omega^{n-m}$ is a $(\omega,m)$-positive
current. If $\varphi_2\in \Pm(X,\omega)\cap L^{\infty}(X)$ then $dd^c \varphi_2 \wedge T_1$ is the
current defined by 
$$
dd^c \varphi_2 \wedge T_1=dd^c (\varphi_2T_1).
$$
We denote by $T_2=\omega_{\varphi_1}\wedge \omega_{\varphi_2}\wedge
\omega^{n-m}.$ Since $\varphi_1, \varphi_2$ are in $\Pm(X,\omega)\cap L^{\infty}(X)$, there exist
uniformly bounded sequences $(\varphi_1^j), (\varphi_2^j)$ in $\Pm(X,\omega)\cap
\mathcal{C}^2(X)$  converging quasi-uniformly to $\varphi_1, \varphi_2$
respectively. The sequence of currents $T_2^j=\omega_{\varphi_1^j}\wedge
\omega_{\varphi_2^j}\wedge \omega^{n-m}$ converges to $T_2$ and hence $T_2$ is
$(\omega,m)$- positive and 
$$
\omega_{\varphi_1}\wedge \omega_{\varphi_2}\wedge
\omega^{n-m}=\omega_{\varphi_2}\wedge \omega_{\varphi_1}\wedge \omega^{n-m}.
$$
To prove that $T_2^j$ converges to $T_2$, let us choose some test form $\chi$
and prove the following convergence
\begin{equation}\label{eq: definition of Hessian measure in Pmt}
\lim_{j\to \infty}\int_X \chi\wedge dd^c(\varphi_2^j-\varphi_2)\wedge T_1=0.
\end{equation} 
We have
\begin{eqnarray*}
\Big\vert\int_X \chi\wedge dd^c(\varphi_2^j-\varphi_2)\wedge
T_1\Big\vert&=&\Big\vert\int_X (\varphi_2^j-\varphi_2)dd^c \chi \wedge
T_1\Big\vert\\
&\leq&C\int_X
\vert\varphi_2^j-\varphi_2\vert\omega_{\varphi_1}\wedge\omega^{n-1},
\end{eqnarray*}
where the constant $C$ depends only on $\chi,\omega.$ Now (\ref{eq: definition
of Hessian measure in Pmt}) follows from the last inequality in view of
$$
\int_U\omega_{\varphi_1}\wedge\omega^{n-1}\leq C\capacity (U),
$$
for every open subset $U\subset X.$ 
 \end{proof}
 
\noindent We can prove inductively that the current
$$
T_k=\omega_{\varphi_1}\wedge...\wedge \omega_{\varphi_k}\wedge \omega^{n-m}
$$
is well-defined, symmetric, $(\omega,m)$-positive, for each $k\leq m$ and
$\varphi_i\in \Pm(X,\omega)\cap L^{\infty}(X)$. The Hessian measure of $\varphi \in \Pm(X,\omega)\cap L^{\infty}(X)$
is defined in this way 
$$
H_m(\varphi)=\omega_{\varphi}\wedge...\wedge \omega_{\varphi}\wedge\omega^{n-m}.
$$
\noindent Now, given $\varphi\in \Pm(X,\omega)\cap L^{\infty}(X)$, it is easy to see that the Hessian measure of $\varphi$ defined by the above construction coincides with the
Hessian measure $H_m(\varphi)$ defined in Theorem  \ref{thm: hessian measure}.
 
\subsection{Some Convergence results}
In this section we state some convergence results and the comparison principle for functions in $\Pm(X,\omega)\cap L^{\infty}(X)$. The proofs are nearly the same as for the Monge-Amp\`ere operator and hence are omitted.
\begin{prop}\label{prop: convergence in capacity}
Let $(\varphi^1_j),...,(\varphi^m_j)$ be uniformly bounded sequence of functions
in $\Pm(X,\omega)\cap L^{\infty}(X)$ converging quasi-uniformly to $\varphi^1,...,\varphi^m$ respectively. Assume that $(f_j)$ is a uniformly bounded sequence of quasi continuous functions converging quasi uniformly to $\f.$ Then we have the weak convergence of measures  
\begin{equation*}\label{prop: convergence in capacity 1}
f_j\omega_{\varphi_j^1}\wedge...\wedge\omega_{\varphi_j^m}\wedge
\omega^{n-m}\rightharpoonup
f\omega_{\varphi^1}\wedge...\wedge\omega_{\varphi^m}\wedge \omega^{n-m}.
\end{equation*}
\end{prop}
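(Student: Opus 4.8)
The plan is to prove the statement in two stages: first the convergence of the mixed Hessian currents alone (the case $f_j\equiv f\equiv 1$), and then the incorporation of the quasi-continuous densities. Throughout, fix $M>0$ with $\vert\varphi^i_j\vert,\vert\varphi^i\vert\leq M$ and $\vert f_j\vert,\vert f\vert\leq M$ for all $i,j$, and set $\mu_j:=\omega_{\varphi_j^1}\wedge\cdots\wedge\omega_{\varphi_j^m}\wedge\omega^{n-m}$ and $\mu:=\omega_{\varphi^1}\wedge\cdots\wedge\omega_{\varphi^m}\wedge\omega^{n-m}$, where $\omega_\psi:=\omega+dd^c\psi$. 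By the mixed-current construction preceding Proposition \ref{prop: convergence in capacity}, these are well-defined $(\omega,m)$-positive measures, each of total mass $\int_X\omega^n$.

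\textbf{Stage 1.} To prove $\mu_j\rightharpoonup\mu$ it suffices to show $\int_X\chi(\mu_j-\mu)\to0$ for every $\chi\in\mathcal{C}^\infty(X)$. I would use the telescoping identity
\begin{equation*}
\mu_j-\mu=\sum_{k=1}^m dd^c(\varphi_j^k-\varphi^k)\wedge T_j^{(k)},\qquad T_j^{(k)}:=\omega_{\varphi^1}\wedge\cdots\wedge\omega_{\varphi^{k-1}}\wedge\omega_{\varphi_j^{k+1}}\wedge\cdots\wedge\omega_{\varphi_j^m}\wedge\omega^{n-m},
\end{equation*}
which comes from $\omega_{\varphi_j^k}-\omega_{\varphi^k}=dd^c(\varphi_j^k-\varphi^k)$. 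Integrating by parts (legitimate for bounded potentials by the Bedford--Taylor formalism underlying the mixed-current construction) gives $\int_X\chi\,dd^c(\varphi_j^k-\varphi^k)\wedge T_j^{(k)}=\int_X(\varphi_j^k-\varphi^k)\,dd^c\chi\wedge T_j^{(k)}$. Since $\pm dd^c\chi\wedge T_j^{(k)}\leq C\,\omega\wedge T_j^{(k)}$ as measures for some $C=C(\chi)$, each term is bounded by $C\int_X\vert\varphi_j^k-\varphi^k\vert\,\omega\wedge T_j^{(k)}$. Fixing $\delta>0$ and splitting this integral over $E_j^k:=\{\vert\varphi_j^k-\varphi^k\vert>\delta\}$ and its complement, the complement contributes at most $\delta\int_X\omega^n$, while on $E_j^k$ the bound $2M$ together with a Chern--Levine--Nirenberg estimate $\int_{E_j^k}\omega\wedge T_j^{(k)}\leq C'\Capacity(E_j^k)$ (proved as in Lemma \ref{lem: capacity non smooth} and (\ref{eq: useful inequality}), with $C'$ independent of $j$) controls the rest. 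Quasi-uniform convergence yields $\Capacity(E_j^k)\to0$ by Proposition \ref{prop: equivalent convergence}(i), so $\limsup_j\int_X\vert\varphi_j^k-\varphi^k\vert\,\omega\wedge T_j^{(k)}\leq\delta\int_X\omega^n$; letting $\delta\downarrow0$ completes Stage 1.

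\textbf{Stage 2.} For the general density, decompose $\int_X\chi f_j\,d\mu_j-\int_X\chi f\,d\mu=\int_X\chi(f_j-f)\,d\mu_j+\big(\int_X\chi f\,d\mu_j-\int_X\chi f\,d\mu\big)$. The first term is treated exactly as in Stage 1: splitting over $\{\vert f_j-f\vert>\delta\}$ and using the domination $\mu_j(E)\leq C'\Capacity(E)$ with $\Capacity(\vert f_j-f\vert>\delta)\to0$ forces it to $0$. For the second term $\chi f$ is only quasi-continuous, so I would invoke quasi-continuity: given $\epsilon>0$, choose an open $U$ with $\Capacity(U)\leq\epsilon$ on whose complement $f$ is continuous, extend $f\vert_{X\setminus U}$ to $\tilde f\in\mathcal{C}^0(X)$ with $\Vert\tilde f\Vert_\infty\leq M$ by Tietze, and write $\int_X\chi f\,d\mu_j=\int_X\chi\tilde f\,d\mu_j+\int_U\chi(f-\tilde f)\,d\mu_j$. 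By Stage 1 the first integral tends to $\int_X\chi\tilde f\,d\mu$, while the error integrals over $U$ are at most $2M\Vert\chi\Vert_\infty C'\Capacity(U)\leq C''\epsilon$ for both $\mu_j$ and $\mu$. Hence $\limsup_j\vert\int_X\chi f\,d\mu_j-\int_X\chi f\,d\mu\vert\leq C''\epsilon$, and $\epsilon\downarrow0$ finishes the argument.

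The main obstacle is the uniform Chern--Levine--Nirenberg estimate $\int_E\omega\wedge T_j^{(k)}\leq C'\Capacity(E)$ for the \emph{mixed} currents $T_j^{(k)}$ whose potentials change with $j$: one must guarantee that $C'$ does not depend on $j$. This needs a mixed-potential analogue of Lemma \ref{lem: capacity non smooth}, obtained by polarizing the pure estimate (\ref{eq: useful inequality}) or by a H\"older-type inequality relating mixed and pure Hessian measures, while tracking that only the uniform sup-bound $M$ and the cohomological mass $\int_X\omega^n$ enter the constant. The remaining steps are the integration-by-parts and capacity bookkeeping that parallel the Monge--Amp\`ere case.
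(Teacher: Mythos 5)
Your proposal is correct and takes essentially the same route as the paper: the paper's own proof is a one-line reduction ("thanks to Lemma \ref{lem: capacity non smooth} we can follow the lines in \cite{Kol05}"), and your telescoping/integration-by-parts/capacity-domination argument, together with the Tietze-extension treatment of the quasi-continuous density, is precisely that Kolodziej-style argument written out. The mixed Chern--Levine--Nirenberg estimate you flag as the main obstacle does hold with a constant depending only on the uniform bound $M$, on $m$, and on $\int_X\omega^n$: rescale the potentials into $[0,1]$, expand $\omega_{\varphi}=(2M+1)\omega_u-2M\omega$ multilinearly (using the symmetry of the mixed currents established before the proposition), and dominate each resulting pure term by the full Hessian measure of an average of the rescaled potentials, which is a competitor in the definition of $\capacity$.
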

\begin{proof}
Thanks to Lemma \ref{lem: capacity non smooth} we can follow the lines in \cite{Kol05}.
\end{proof}
\noindent The integration by parts formula is valid for $\mathcal{C}^2$ functions (by
Stokes). By Proposition \ref{prop: convergence in capacity} we see that it is also
valid for functions in $\Pm(X,\omega)\cap L^{\infty}(X).$
\begin{theorem}[Integration by parts]\label{thm: integration by parts}
Let $\varphi,\psi \in \Pm(X,\omega)\cap L^{\infty}(X)$ and $T$ be a current of the form 
$$
T=\omega_{\varphi_1}\wedge...\wedge \omega_{\varphi_{m-1}}\wedge\omega^{n-m},
$$
with $\varphi_i \in \Pm(X,\omega)\cap L^{\infty}(X).$ Then
$$
\int_X \varphi dd^c\psi \wedge T=\int_X \psi dd^c\varphi \wedge T.
$$
\end{theorem}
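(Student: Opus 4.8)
The plan is to establish the identity first for $\Cc^2$ data and then pass to the limit with the convergence tools already in place. So I would begin with the case where $\varphi,\psi,\varphi_1,\dots,\varphi_{m-1}$ are all $\Cc^2$. Here each $\omega+dd^c\varphi_i$ is closed, hence $T=\omega_{\varphi_1}\wedge\cdots\wedge\omega_{\varphi_{m-1}}\wedge\omega^{n-m}$ is a closed $(n-1,n-1)$-form, i.e. $dT=0$. The starting point is the pointwise identity
\[
d\big[(\varphi\, d^c\psi-\psi\, d^c\varphi)\wedge T\big]=\big(\varphi\, dd^c\psi-\psi\, dd^c\varphi\big)\wedge T+\big(d\varphi\wedge d^c\psi-d\psi\wedge d^c\varphi\big)\wedge T,
\]
which uses $dT=0$. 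Integrating over the closed manifold $X$, the left side vanishes by Stokes' theorem (the forms are $\Cc^1$, which suffices). For the last term I would note that the $(2,0)$ and $(0,2)$ components of $d\varphi\wedge d^c\psi$ and of $d\psi\wedge d^c\varphi$ are killed upon wedging with the $(n-1,n-1)$-form $T$ for bidegree reasons, while a direct computation shows their two $(1,1)$-components coincide; hence $(d\varphi\wedge d^c\psi-d\psi\wedge d^c\varphi)\wedge T=0$. This yields $\int_X\varphi\, dd^c\psi\wedge T=\int_X\psi\, dd^c\varphi\wedge T$ in the smooth case.

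For the general case I would approximate. Since $\varphi,\psi,\varphi_i\in\Pm(X,\omega)\cap L^{\infty}(X)$, choose sequences $\varphi^{(k)},\psi^{(k)},\varphi_i^{(k)}$ of $\Cc^2$ $(\omega,m)$-subharmonic functions converging quasi-uniformly (equivalently in capacity) to $\varphi,\psi,\varphi_i$; as in the proof of Lemma \ref{lem: capacity non smooth} these may be taken uniformly bounded. For each $k$ the smooth identity gives $\int_X\varphi^{(k)}\, dd^c\psi^{(k)}\wedge T^{(k)}=\int_X\psi^{(k)}\, dd^c\varphi^{(k)}\wedge T^{(k)}$, where $T^{(k)}=\omega_{\varphi_1^{(k)}}\wedge\cdots\wedge\omega_{\varphi_{m-1}^{(k)}}\wedge\omega^{n-m}$. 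To pass to the limit I rewrite $dd^c\psi^{(k)}=\omega_{\psi^{(k)}}-\omega$, so that
\[
\int_X\varphi^{(k)}\, dd^c\psi^{(k)}\wedge T^{(k)}=\int_X\varphi^{(k)}\,\omega_{\psi^{(k)}}\wedge T^{(k)}-\int_X\varphi^{(k)}\,\omega\wedge T^{(k)}.
\]

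Now I would invoke Proposition \ref{prop: convergence in capacity} with the multiplier $f_j=\varphi^{(k)}$ (a uniformly bounded, quasi-continuous sequence converging quasi-uniformly to $\varphi$). For the first integral the $m$ potential factors are $\psi^{(k)},\varphi_1^{(k)},\dots,\varphi_{m-1}^{(k)}$, and for the second one replaces $\omega_{\psi^{(k)}}$ by $\omega$ (legitimately, taking the constant $0$ as the $m$-th subharmonic function, so that $\omega+dd^c0=\omega$). The proposition then yields weak convergence of the measures $\varphi^{(k)}\omega_{\psi^{(k)}}\wedge T^{(k)}$ and $\varphi^{(k)}\omega\wedge T^{(k)}$ to $\varphi\,\omega_{\psi}\wedge T$ and $\varphi\,\omega\wedge T$ respectively; testing against the constant function $1$ gives convergence of the total masses, so $\int_X\varphi^{(k)}\, dd^c\psi^{(k)}\wedge T^{(k)}\to\int_X\varphi\, dd^c\psi\wedge T$. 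The symmetric argument handles the right-hand sides, and since the two sides coincide for every $k$ they coincide in the limit.

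I expect the main obstacle to be precisely the passage to the limit, i.e. verifying that the hypotheses of Proposition \ref{prop: convergence in capacity} are genuinely met: that the $\Cc^2$ approximants can be arranged to be uniformly bounded (so the product measures have uniformly bounded mass and the multipliers are uniformly bounded), and that the limit multiplier is exactly the quasi-continuous function $\varphi$ appearing there. The smooth step itself is routine once one observes $dT=0$ and the bidegree vanishing of the gradient cross terms; the only care needed there is that $\Cc^2$ regularity suffices for Stokes, which it does since $dd^c$ of a $\Cc^2$ function is continuous.
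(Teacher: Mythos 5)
Your proposal is correct and takes essentially the same route as the paper: the paper's own proof consists precisely of the two steps you carry out — the identity for $\Cc^2$ functions via Stokes' theorem, then passage to the limit along uniformly bounded $\Cc^2$ approximants using Proposition \ref{prop: convergence in capacity} — stated there in just two sentences. Your write-up simply supplies the details (the closedness of $T$, the vanishing of the gradient cross terms for bidegree reasons, and the splitting $dd^c\psi=\omega_\psi-\omega$ needed to invoke the convergence proposition) that the paper leaves implicit.
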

\noindent The maximum principle for functions in $\Pm(X,\omega)$ can be proved by the same way as in the classical case. 
\begin{theorem}[Maximum principle]\label{thm: comparison principle}
If $\varphi,\psi$ be two functions in $\Pm(X,\omega)\cap L^{\infty}(X)$ then 
$$
\ind_{\{\varphi>\psi\}}H_m(\max (\varphi,\psi))=\ind_{\{\varphi>\psi\}}H_m(\varphi).
$$
\end{theorem}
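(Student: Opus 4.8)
The plan is to recognize this as, in essence, a statement about the \emph{locality} of the Hessian operator, the only genuine content being that the set $\{\varphi>\psi\}$ fails to be open. First I set $u:=\max(\varphi,\psi)$, which belongs to $\Pm(X,\omega)\cap L^{\infty}(X)$ by Lemma \ref{prop: properties of functions in class Pm}, so that $H_m(u)$ is well defined. On $\{\varphi>\psi\}$ one has $u=\varphi$ pointwise, and the goal is to upgrade this pointwise identity to an identity of Hessian measures.

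The first step is a locality lemma: if $w_1,w_2\in\Pm(X,\omega)\cap L^{\infty}(X)$ coincide on an open set $G$, then $H_m(w_1)=H_m(w_2)$ on $G$. I would prove this by induction on the number of distinct factors in the mixed products $\omega_{w_1}\wedge\cdots\wedge\omega^{n-m}$, testing against a smooth form $\chi$ with $\mathrm{supp}\,\chi\subset G$ and transferring the $dd^c$ onto $\chi$ via the integration by parts formula (Theorem \ref{thm: integration by parts}). The base case is the identity $dd^cw_1=dd^cw_2$ of currents on $G$, which holds because $w_1-w_2=0$ there; the inductive step replaces one factor at a time, from $\omega_{w_1}^m\wedge\omega^{n-m}$ to $\omega_{w_2}^m\wedge\omega^{n-m}$, each replacement being legitimate on $\mathrm{supp}\,\chi\subset G$, where the lower mixed products already agree by the inductive hypothesis.

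Next I reduce the coincidence set. Since $\{\varphi>\psi\}=\bigcup_{\epsilon>0}\{\varphi>\psi+\epsilon\}$ is an increasing union and both $H_m(u)$ and $H_m(\varphi)$ are finite measures of mass $\int_X\omega^n$, it suffices to prove, for each fixed $\epsilon>0$, that $\ind_{A_\epsilon}H_m(u)=\ind_{A_\epsilon}H_m(\varphi)$ with $A_\epsilon:=\{\varphi>\psi+\epsilon\}$, and then let $\epsilon\downarrow 0$. To treat a single $A_\epsilon$ I would invoke quasi-continuity: given $\delta>0$, choose an open $U$ with $\Capacity(U)<\delta$ on whose complement $\varphi$ and $\psi$ are continuous. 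Then $\varphi-\psi$ is continuous on $X\setminus U$, so $A_\epsilon$ agrees off $U$ with an open set $W$, and on this open part $u=\varphi$, where the locality lemma applies. The remaining piece is contained in $U$, and its contribution to both measures is $O(\delta)$ by the capacity domination of Lemma \ref{lem: capacity non smooth}. Letting $\delta\to0$ and then $\epsilon\to0$ yields the claim.

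The hard part is precisely this last reduction: $\{\varphi>\psi\}$ (and each $A_\epsilon$) is not open but only \emph{plurifinely} open, so the locality lemma cannot be applied to it directly. The role of quasi-continuity together with the uniform domination of Hessian mass by capacity (Lemma \ref{lem: capacity non smooth}) is exactly to show that the thin boundary layer on which openness fails carries negligible mass, so that an open-set statement can be promoted to this quasi-open set. This is the $(\omega,m)$-analogue of Bedford and Taylor's locality of the complex Monge--Amp\`ere operator in the fine topology, and checking that the exceptional layer is genuinely captured inside a set of arbitrarily small capacity is the step demanding the most care.
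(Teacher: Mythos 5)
Your open-set locality lemma is fine (it is the same telescoping plus integration-by-parts device the paper already uses in the proof of Theorem \ref{thm: hessian measure}), and the reduction to the sets $A_\epsilon$ is harmless. The gap is the sentence ``so $A_\epsilon$ agrees off $U$ with an open set $W$, and on this open part $u=\varphi$, where the locality lemma applies.'' What you actually know is that $u=\varphi$ on $\{\varphi>\psi\}$, hence on $W\setminus U$; you know nothing on $W\cap U$, where $\varphi<\psi$ may well hold, so the locality lemma cannot be applied to $W$, and it cannot be applied to $W\setminus U$ either, since that set is not open. Your capacity bookkeeping does not repair this: Lemma \ref{lem: capacity non smooth} controls the mass that \emph{each} of $H_m(u)$, $H_m(\varphi)$ puts on $U$, giving $H_m(u)(A_\epsilon)=H_m(u)(W)+O(\delta)$ and $H_m(\varphi)(A_\epsilon)=H_m(\varphi)(W)+O(\delta)$, but the remaining comparison of $H_m(u)(W)$ with $H_m(\varphi)(W)$ is exactly what you have no access to. If you try to get it by testing $dd^c(u-\varphi)\wedge T$ against cutoffs $\chi$ supported in $W$, the resulting bound is of the form $C_\chi\,\Capacity(U)$ with $C_\chi$ controlled by $dd^c\chi$, and $C_\chi$ blows up as $\chi\uparrow\ind_W$; moreover $W=W_\delta$ moves with $\delta$, so you cannot fix $\chi$ and let $\delta\to 0$. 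In short, ``the two Hessian measures agree on a quasi-open set where the potentials coincide'' is precisely the plurifine locality you are trying to prove, and peeling off a small-capacity layer reproduces the same problem one level down: the argument is circular. (The paper itself gives no proof here, deferring to ``the classical case''; the classical argument is not the one you sketched.)

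The standard way to close this, and the one matching the paper's toolkit (decreasing continuous approximation being unavailable for $(\omega,m)$-subharmonic functions), is to put the openness on the approximants rather than on $\{\varphi>\psi\}$. Take the uniformly bounded $\Cc^2$ sequences $\varphi_k\to\varphi$, $\psi_k\to\psi$ converging quasi-uniformly, furnished by the definition of $\Pm(X,\omega)$. The sets $\{\varphi_k>\psi_k\}$ \emph{are} open and $\max(\varphi_k,\psi_k)=\varphi_k$ there, so your locality lemma gives, for every continuous $h\geq 0$ and every $j\in\N$,
$$
\int_X h\,\theta_{j,k}\,H_m\bigl(\max(\varphi_k,\psi_k)\bigr)=\int_X h\,\theta_{j,k}\,H_m(\varphi_k),
\qquad \theta_{j,k}:=\min\bigl(1,\,j\max(\varphi_k-\psi_k,0)\bigr),
$$
because $\theta_{j,k}$ vanishes identically outside the open set $\{\varphi_k>\psi_k\}$. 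Now fix $j$ and let $k\to\infty$: the functions $\theta_{j,k}$ are uniformly bounded, quasi-continuous, and converge quasi-uniformly to $\theta_j:=\min(1,j\max(\varphi-\psi,0))$, while $\max(\varphi_k,\psi_k)\to\max(\varphi,\psi)$ and $\varphi_k\to\varphi$ quasi-uniformly, so Proposition \ref{prop: convergence in capacity} passes both sides to the limit. Finally let $j\to\infty$: since $\theta_j\uparrow\ind_{\{\varphi>\psi\}}$ pointwise, monotone convergence for each of the two fixed measures yields $\ind_{\{\varphi>\psi\}}H_m(\max(\varphi,\psi))=\ind_{\{\varphi>\psi\}}H_m(\varphi)$. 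It is this combination, quasi-continuous truncations of the indicator together with the convergence theorem under quasi-uniform convergence, that promotes open-set locality to the plurifinely open set $\{\varphi>\psi\}$; capacity smallness of the exceptional layer alone cannot do it.
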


\noindent From Theorem \ref{thm: comparison principle} we easily get 
 \begin{corollary}[Comparison principle]\label{cor: comparison principle}
 If $\varphi,\psi\in \Pm(X,\omega)\cap L^{\infty}(X)$ then
$$
\int_{(\varphi>\psi)}H_m(\varphi)\leq \int_{(\varphi>\psi)}H_m(\psi). 
$$
\end{corollary}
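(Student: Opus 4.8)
The plan is to deduce the stated inequality from the Maximum Principle (Theorem \ref{thm: comparison principle}) together with the invariance of the total Hessian mass, introducing a perturbation $\psi \mapsto \psi+\epsilon$ to separate $\varphi$ and $\psi$ and thereby gain control over the coincidence set. The whole scheme mirrors the classical comparison principle for the Monge--Amp\`ere operator.

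First I would record the total-mass identity: for every $\chi\in \Pm(X,\omega)\cap \LX$ one has $\int_X H_m(\chi)=\int_X \omega^n$. For $\Cc^2$ functions this follows by expanding $(\omega+dd^c\chi)^m$ and integrating by parts (Stokes on the closed manifold $X$) every term carrying a factor $dd^c\chi$, leaving only $\int_X \omega^n$. The general case follows by approximating $\chi$ in $\capacity$ by $\Cc^2$ $(\omega,m)$-subharmonic functions and applying the weak convergence of Theorem \ref{thm: hessian measure} to the constant test function $1$, which is smooth on the compact $X$, so no mass escapes.

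Next, fix $\epsilon>0$ and set $u_\epsilon:=\max(\varphi,\psi+\epsilon)$, which again lies in $\Pm(X,\omega)\cap \LX$ by Lemma \ref{prop: properties of functions in class Pm}, noting $H_m(\psi+\epsilon)=H_m(\psi)$. On the open set $\{\varphi<\psi+\epsilon\}$ we have $u_\epsilon=\psi+\epsilon$, so by locality of the Hessian measure $H_m(u_\epsilon)=H_m(\psi)$ there; on $\{\varphi>\psi+\epsilon\}$ the Maximum Principle applied to $\varphi$ and $\psi+\epsilon$ gives $\ind_{\{\varphi>\psi+\epsilon\}}H_m(u_\epsilon)=\ind_{\{\varphi>\psi+\epsilon\}}H_m(\varphi)$. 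Partitioning $X$ into the disjoint sets $\{\varphi>\psi+\epsilon\}$, $\{\varphi=\psi+\epsilon\}$, $\{\varphi<\psi+\epsilon\}$ and equating the two total masses $\int_X H_m(u_\epsilon)=\int_X H_m(\psi)=\int_X\omega^n$, the contributions over $\{\varphi<\psi+\epsilon\}$ cancel and one is left with
\[
\int_{\{\varphi>\psi+\epsilon\}}\big[H_m(\varphi)-H_m(\psi)\big]=\int_{\{\varphi=\psi+\epsilon\}}\big[H_m(\psi)-H_m(u_\epsilon)\big]\leq \int_{\{\varphi=\psi+\epsilon\}}H_m(\psi),
\]
the last step because $H_m(u_\epsilon)\geq 0$.

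The main obstacle is exactly this coincidence term $\int_{\{\varphi=\psi+\epsilon\}}H_m(\psi)$, since $H_m(u_\epsilon)$ cannot be estimated on it directly ($u_\epsilon$ varies with $\epsilon$). To dispose of it I would exploit that the level sets $\{\varphi=\psi+\epsilon\}=\{\varphi-\psi=\epsilon\}$ are pairwise disjoint, so the \emph{finite} measure $H_m(\psi)$ charges only countably many of them; hence there is a sequence $\epsilon_j\downarrow 0$ with $\int_{\{\varphi=\psi+\epsilon_j\}}H_m(\psi)=0$, yielding $\int_{\{\varphi>\psi+\epsilon_j\}}[H_m(\varphi)-H_m(\psi)]\leq 0$ for each $j$. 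Finally, since $\{\varphi>\psi+\epsilon_j\}\uparrow\{\varphi>\psi\}$, monotone convergence applied to the positive finite measures $H_m(\varphi)$ and $H_m(\psi)$ gives $\int_{\{\varphi>\psi\}}H_m(\varphi)\leq \int_{\{\varphi>\psi\}}H_m(\psi)$, which is the claim.
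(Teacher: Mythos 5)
Your proof follows essentially the route the paper intends (the corollary is stated as an immediate consequence of the Maximum Principle, Theorem \ref{thm: comparison principle}): perturb $\psi$ by $\epsilon$, apply the maximum principle, and use conservation of the total Hessian mass. However, one step is justified incorrectly. You assert that $\{\varphi<\psi+\epsilon\}$ is open and invoke locality of the Hessian measure there. Since $\varphi$ and $\psi$ are merely bounded, upper semicontinuous (quasi-continuous) functions, the difference $\varphi-\psi$ is neither usc nor lsc, so this set need \emph{not} be open, and ``locality'' of $H_m$ on arbitrary Borel sets where two potentials coincide is false in general. The conclusion you want, $\ind_{\{\varphi<\psi+\epsilon\}}H_m(u_\epsilon)=\ind_{\{\varphi<\psi+\epsilon\}}H_m(\psi)$, is nevertheless correct and follows from the very tool you already cite: apply Theorem \ref{thm: comparison principle} with the roles of the two functions exchanged, i.e.\ to the pair $(\psi+\epsilon,\varphi)$, noting that $\max(\psi+\epsilon,\varphi)=u_\epsilon$ and that $H_m(\psi+\epsilon)=H_m(\psi)$ because $\omega+dd^c(\psi+\epsilon)=\omega+dd^c\psi$. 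With this substitution your argument goes through.

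A second, stylistic remark: your countability trick for the coincidence set (choosing $\epsilon_j$ so that $H_m(\psi)$ does not charge $\{\varphi-\psi=\epsilon_j\}$) is valid but unnecessary. The standard bookkeeping avoids it: from the two applications of the maximum principle one gets
$$
\int_X\omega^n=\int_X H_m(u_\epsilon)\geq \int_{\{\varphi>\psi+\epsilon\}}H_m(\varphi)+\int_{\{\varphi<\psi+\epsilon\}}H_m(\psi),
$$
hence
$$
\int_{\{\varphi>\psi+\epsilon\}}H_m(\varphi)\leq \int_X H_m(\psi)-\int_{\{\varphi<\psi+\epsilon\}}H_m(\psi)=\int_{\{\varphi\geq\psi+\epsilon\}}H_m(\psi)\leq \int_{\{\varphi>\psi\}}H_m(\psi),
$$
and letting $\epsilon\downarrow 0$ with monotone convergence on the left finishes the proof. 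Here one simply discards the (nonnegative) mass of $H_m(u_\epsilon)$ on the coincidence set, rather than proving that $H_m(\psi)$ gives it no mass.
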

 
 \begin{lemma}\label{lemma: capacity}
 Let $\varphi,\psi$ be two non positive  functions in $\Pm(X,\omega)\cap L^{\infty}(X).$ If $s>0$
and $0<t< 1$ then we have
 			\begin{equation}\label{eq: capacity}
			 t^m\Capacity(\varphi-\psi< -t-s)\leq (1+M)^m\int_{(\varphi-\psi< -s)}
			H_m(\varphi),
			 \end{equation}
 where $M=\Vert \psi\Vert_{L^{\infty}(X)}.$
 \end{lemma}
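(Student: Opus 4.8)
The plan is to bound the inner capacity $\capacity(\{\varphi-\psi<-s-t\})$ by testing against an arbitrary competitor and then upgrade to $\Capacity$. Fix $\rho\in SH_m(X,\omega)\cap\Cc^2(X)$ with $0\le\rho\le1$ and set $\tau:=t/(1+M)\in(0,1)$ (note $t<1\le 1+M$). The heart of the argument is the auxiliary function
$$
\Phi:=(1-\tau)\psi+\tau\rho-s-t .
$$
I introduce $\Phi$ as a \emph{convex} combination of $\psi$ and $\rho$ rather than as the naive additive shift $\psi+t\rho$: the latter satisfies $\omega+dd^c(\psi+t\rho)=(\omega+dd^c\psi)+t(\omega+dd^c\rho)-t\omega$, and the defect $-t\omega$ destroys $(\omega,m)$-positivity. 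Avoiding this defect is the one genuine obstacle, and it is exactly what forces the loss of the factor $(1+M)^m$.

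By Proposition \ref{prop: properties of m sub functions} and the approximation definition of the class, $\Phi\in\Pm(X,\omega)\cap L^\infty(X)$, so the comparison principle applies to it. Moreover $\omega+dd^c\Phi=(1-\tau)(\omega+dd^c\psi)+\tau(\omega+dd^c\rho)$; expanding its $m$-th power wedged with $\omega^{n-m}$ and discarding all but the lowest term --- legitimate because every mixed product of $(\omega,m)$-positive currents against $\omega^{n-m}$ is a nonnegative measure, as established in the construction of the Hessian measure --- gives the pointwise bound of measures $H_m(\Phi)\ge\tau^m H_m(\rho)$. On the other hand, $-M\le\psi\le0$ and $0\le\rho\le1$ yield $0\le\rho-\psi\le1+M$, hence $\Phi-\psi=\tau(\rho-\psi)-s-t\in[-s-t,-s]$ because $\tau(1+M)=t$. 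This sandwich $\psi-s-t\le\Phi\le\psi-s$ produces the two inclusions
$$
\{\varphi-\psi<-s-t\}\subseteq\{\varphi<\Phi\}\subseteq\{\varphi-\psi<-s\}.
$$

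Now I apply the comparison principle (Corollary \ref{cor: comparison principle}) to the pair $(\Phi,\varphi)$ and chain the estimates: using the first inclusion together with $H_m(\rho)\ge0$, then $H_m(\Phi)\ge\tau^m H_m(\rho)$, then comparison on $\{\varphi<\Phi\}=\{\Phi>\varphi\}$, then the second inclusion together with $H_m(\varphi)\ge0$,
$$
\tau^m\int_{\{\varphi-\psi<-s-t\}}H_m(\rho)\le\int_{\{\varphi<\Phi\}}H_m(\Phi)\le\int_{\{\varphi<\Phi\}}H_m(\varphi)\le\int_{\{\varphi-\psi<-s\}}H_m(\varphi).
$$
Taking the supremum over all competitors $\rho$ bounds $\tau^m\capacity(\{\varphi-\psi<-s-t\})$; substituting $\tau=t/(1+M)$ and multiplying through by $(1+M)^m$ yields (\ref{eq: capacity}) for the inner capacity. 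Finally I pass to the outer capacity $\Capacity$: since $\varphi,\psi$ are quasi-continuous, for each $\epsilon>0$ there is an open set $G$ with $\Capacity(G)<\epsilon$ on whose complement $\varphi-\psi$ is continuous, so $\{\varphi-\psi<-s-t\}$ differs from an open set by a subset of $G$; monotonicity and $\sigma$-subadditivity of $\Capacity$ then transfer the bound, and letting $\epsilon\to0$ finishes the proof. I expect this last passage to be routine, the real work lying in the convexity construction of $\Phi$ and in matching the window of width $t$ against the spread $\tau(1+M)$.
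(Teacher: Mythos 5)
Your proof is correct and is essentially the argument the paper intends: the paper's own proof reduces to the case where $\psi$ is continuous (so that $\{\varphi-\psi<-s-t\}$ is open and $\Capacity$ coincides with $\capacity$ on it) and then cites the competitor argument of \cite{EGZ09}, which is precisely your construction $\Phi=(1-\tau)\psi+\tau\rho-s-t$ with $\tau=t/(1+M)$, the sandwich $\psi-s-t\le\Phi\le\psi-s$, the multinomial bound $H_m(\Phi)\ge\tau^m H_m(\rho)$, and Corollary \ref{cor: comparison principle}. The only (minor) divergence is in the passage from $\capacity$ to $\Capacity$: the paper assumes $\psi$ continuous and approximates quasi-uniformly in general, while you invoke quasi-continuity of $\varphi-\psi$ directly, both being routine and equally valid.
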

   \begin{proof}
We can assume that $\psi$ is continuous on $X$. For the general case we can approximate $\psi$ quasi-uniformly by sequence of $\Cc^2$  functions in $SH_m(X,\omega)$. In (\ref{eq: capacity}) We can replace $\Capacity$ by $\capacity$ since they coincide on open sets. Now, it suffices to repeat the arguments in \cite{EGZ09}.

   \end{proof}
   
   \begin{prop}[Chern-Levine-Nirenberg inequality]\label{prop:
Chern-Levine-Nirenberg inequality}
   Let $T$ be any current of the form $T=\omega_{u_1}\wedge
...\wedge\omega_{u_{m-1}}\wedge \omega^{n-m}$ with $u_1,...,u_{m-1}\in
\Pm(X,\omega)\cap L^{\infty}(X),$ and $\varphi, \psi$ be two functions in $\Pm(X,\omega)\cap L^{\infty}(X)$. Then
  \begin{equation}\label{ineq: CLN}
     \int_X \vert \psi\vert \omega_{\varphi}\wedge T\leq \int_X \vert\psi\vert
T\wedge\omega+ \Big(2\vert \sup_X\psi\vert
+\sup_X\varphi-\inf_X\varphi\Big)\int_X\omega^n.
   \end{equation}
   \end{prop}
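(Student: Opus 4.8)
The plan is to isolate $dd^c\varphi$ from $\omega_{\varphi}=\omega+dd^c\varphi$ and reduce the estimate to a signed integral that can be handled by integration by parts. Writing $\omega_{\varphi}\wedge T=\omega\wedge T+dd^c\varphi\wedge T$, the first piece produces exactly the term $\int_X|\psi|\,T\wedge\omega$ on the right-hand side, so it suffices to prove
\[
\int_X |\psi|\,dd^c\varphi\wedge T \le \Big(2|\sup_X\psi| + \sup_X\varphi - \inf_X\varphi\Big)\int_X\omega^n .
\]
Throughout I would use that any wedge product of the form $\omega_{v_1}\wedge\cdots\wedge\omega_{v_m}\wedge\omega^{n-m}$ with $v_i\in\Pm(X,\omega)\cap L^{\infty}(X)$ is a positive measure of total mass $\int_X\omega^n$, by cohomological invariance (Stokes); in particular $\int_X dd^c\varphi\wedge T=0$ and $\int_X\omega_\psi\wedge T=\int_X\omega\wedge T=\int_X\omega^n$.

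The central difficulty is that $dd^c\varphi\wedge T=\omega_{\varphi}\wedge T-\omega\wedge T$ is only a \emph{signed} measure, so one cannot integrate a pointwise bound on $|\psi|$ against it. To circumvent this I would normalize by setting $\tilde\psi:=\psi-\sup_X\psi\le 0$, which still lies in $\Pm(X,\omega)\cap L^{\infty}(X)$ since adding a constant does not change $\omega_\psi$. Then the triangle and reverse triangle inequalities give $-\tilde\psi-|\sup_X\psi|\le|\psi|\le-\tilde\psi+|\sup_X\psi|$. Applying the upper bound against the positive measure $\omega_{\varphi}\wedge T$, the lower bound against the positive measure $\omega\wedge T$, and subtracting, the boundary constants combine to yield
\[
\int_X|\psi|\,dd^c\varphi\wedge T \le -\int_X\tilde\psi\,dd^c\varphi\wedge T + 2|\sup_X\psi|\int_X\omega^n .
\]

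It remains to estimate $-\int_X\tilde\psi\,dd^c\varphi\wedge T$. Here I would invoke the integration by parts formula (Theorem \ref{thm: integration by parts}), legitimate because $\tilde\psi,\varphi\in\Pm(X,\omega)\cap L^{\infty}(X)$, to move the $dd^c$ onto $\tilde\psi$. Using $\omega+dd^c\tilde\psi=\omega_\psi$ one gets $dd^c\tilde\psi\wedge T=\omega_\psi\wedge T-\omega\wedge T$, hence
\[
-\int_X\tilde\psi\,dd^c\varphi\wedge T = -\int_X\varphi\,dd^c\tilde\psi\wedge T = -\int_X\varphi\,\omega_\psi\wedge T + \int_X\varphi\,\omega\wedge T .
\]
Since $\omega_\psi\wedge T$ and $\omega\wedge T$ are positive of mass $\int_X\omega^n$, bounding $\varphi$ from below by $\inf_X\varphi$ in the first integral and from above by $\sup_X\varphi$ in the second gives $(\sup_X\varphi-\inf_X\varphi)\int_X\omega^n$. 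Combining the two displays bounds $\int_X|\psi|\,dd^c\varphi\wedge T$ as required, and adding back $\int_X|\psi|\,\omega\wedge T=\int_X|\psi|\,T\wedge\omega$ completes the proof.

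I expect the main obstacle to be precisely the signed-measure issue above: the splitting of $dd^c\varphi\wedge T$ into the two positive measures $\omega_{\varphi}\wedge T$ and $\omega\wedge T$, together with the normalized potential $\tilde\psi$, is the key maneuver. Everything else is integration by parts plus the elementary mass identities, so no further genuine obstacle should arise.
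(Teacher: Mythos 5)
Your proof is correct, and it is essentially the argument the paper itself appeals to: the paper omits the proof with a pointer to \cite{GZ05}, and the standard proof there consists exactly of your ingredients --- splitting $\omega_{\varphi}\wedge T=\omega\wedge T+dd^c\varphi\wedge T$, normalizing $\tilde\psi=\psi-\sup_X\psi\leq 0$ (which accounts for the $2\vert\sup_X\psi\vert$ term), integrating by parts via Theorem \ref{thm: integration by parts}, and using the total-mass identities to produce the $\sup_X\varphi-\inf_X\varphi$ term. So your writeup is simply the paper's (omitted) proof carried out in full, with no gaps.
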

    \begin{proof}
The proof is nearly the same as in \cite{GZ05} and is omitted.
\end{proof}
\noindent Applying (\ref{ineq: CLN}) for $T_i=\omega_{\varphi}^i\wedge \omega^{n-m+i}$
for $i=m-1,...,0$ we obtain 
\begin{corollary}\label{cor: Chern-Levine-Nirenberg inequality}
Let $\varphi, \psi$ be two functions in $\Pm(X,\omega)$ such that $0\leq
\varphi\leq 1$. Then
\begin{equation*}
\int_X \vert \psi\vert H_m(\varphi)\leq \int_X \vert\psi\vert\omega^n+
m\Big(2\vert \sup_X\psi\vert +1\Big)\int_X\omega^n.
\end{equation*}
\end{corollary}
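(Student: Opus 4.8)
The plan is to isolate the only non-trivial contribution, namely $\int_X |\psi|\,dd^c\varphi\wedge T$, and to control it by an integration by parts that shifts $dd^c$ off of $\varphi$ and onto $\psi$. Throughout I will use that the mixed current $\omega_{\varphi}\wedge T=(\omega+dd^c\varphi)\wedge\omega_{u_1}\wedge\dots\wedge\omega_{u_{m-1}}\wedge\omega^{n-m}$ is a \emph{positive} measure (by the positivity of the mixed Hessian currents established in the previous subsection) whose total mass is $\int_X\omega^n$, since all the factors are closed and cohomologous to $\omega$; the same holds for $\omega_{\psi}\wedge T$ and for $\omega\wedge T$.

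First I would normalize $\psi$ by setting $v:=\psi-\sup_X\psi\leq 0$, so that $-v\geq 0$ and pointwise $|\psi|\leq |\sup_X\psi|+(-v)$. Integrating this against the positive measure $\omega_{\varphi}\wedge T$ gives
\[
\int_X|\psi|\,\omega_{\varphi}\wedge T\leq |\sup_X\psi|\int_X\omega^n+\int_X(-v)\,\omega_{\varphi}\wedge T.
\]
Expanding $\omega_{\varphi}\wedge T=\omega\wedge T+dd^c\varphi\wedge T$, the task reduces to bounding $\int_X(-v)\,dd^c\varphi\wedge T$. Here I would invoke the integration by parts formula (Theorem \ref{thm: integration by parts}), which is legitimate precisely because $v,\varphi\in\Pm(X,\omega)\cap L^{\infty}(X)$ and $T$ is closed, to obtain
\[
\int_X(-v)\,dd^c\varphi\wedge T=-\int_X\varphi\,dd^c v\wedge T=-\int_X\varphi\,(\omega_{\psi}-\omega)\wedge T.
\]

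Now I would estimate the right-hand side using only the boundedness of $\varphi$ and the positivity/mass of the two measures: since $\inf_X\varphi\leq\varphi\leq\sup_X\varphi$, one gets $-\int_X\varphi\,\omega_{\psi}\wedge T\leq -\inf_X\varphi\int_X\omega^n$ and $\int_X\varphi\,\omega\wedge T\leq\sup_X\varphi\int_X\omega^n$, hence $\int_X(-v)\,dd^c\varphi\wedge T\leq(\sup_X\varphi-\inf_X\varphi)\int_X\omega^n$. It remains to convert the term $\int_X(-v)\,\omega\wedge T$ back to $|\psi|$: from $-v=\sup_X\psi-\psi\leq|\sup_X\psi|+|\psi|$ and positivity of $\omega\wedge T$ one has $\int_X(-v)\,\omega\wedge T\leq\int_X|\psi|\,\omega\wedge T+|\sup_X\psi|\int_X\omega^n$. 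Assembling the three estimates produces exactly the constant $2|\sup_X\psi|+\sup_X\varphi-\inf_X\varphi$ in front of $\int_X\omega^n$, as claimed.

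The one genuinely delicate point is the integration by parts step: for merely bounded (not $\Cc^2$) $(\omega,m)$-subharmonic functions it is not automatic, and everything hinges on Theorem \ref{thm: integration by parts} being available in the class $\Pm(X,\omega)\cap L^{\infty}(X)$. That theorem in turn rests on the convergence-in-capacity result (Proposition \ref{prop: convergence in capacity}) and the capacity bound of Lemma \ref{lem: capacity non smooth}, through which one first proves the identity for approximating $\Cc^2$ functions and then passes to the limit along quasi-uniformly convergent sequences. Once this is granted, the remainder is the careful bookkeeping above, which is why the argument is "nearly the same as in \cite{GZ05}"; alternatively one could prove the inequality first for $\Cc^2$ data $\varphi,\psi,u_i$ by Stokes and then take limits, invoking Proposition \ref{prop: convergence in capacity} to pass to the limit in each integral.
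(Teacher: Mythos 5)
Your argument, as written, does not reach the stated corollary: what you prove is the single-step Chern--Levine--Nirenberg inequality (\ref{ineq: CLN}) of the preceding proposition, namely
\begin{equation*}
\int_X \vert \psi\vert \,\omega_{\varphi}\wedge T\leq \int_X \vert\psi\vert\, T\wedge\omega+ \Big(2\vert \sup_X\psi\vert +\sup_X\varphi-\inf_X\varphi\Big)\int_X\omega^n
\end{equation*}
for a general current $T=\omega_{u_1}\wedge\cdots\wedge\omega_{u_{m-1}}\wedge\omega^{n-m}$. The corollary instead asserts $\int_X\vert\psi\vert H_m(\varphi)\leq\int_X\vert\psi\vert\omega^n+m\big(2\vert\sup_X\psi\vert+1\big)\int_X\omega^n$: the left-hand sides differ ($\omega_{\varphi}\wedge T$ versus $\omega_{\varphi}^m\wedge\omega^{n-m}$), the right-hand sides differ ($T\wedge\omega$ versus $\omega^n$), your constant is not the corollary's constant, you never use the hypothesis $0\leq\varphi\leq 1$, and the factor $m$ appears nowhere. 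The missing step --- which is the paper's entire proof of the corollary --- is an $m$-fold iteration: apply (\ref{ineq: CLN}) with $T_i=\omega_{\varphi}^{\,i}\wedge\omega^{n-1-i}$ (i.e. $u_1=\cdots=u_i=\varphi$, $u_{i+1}=\cdots=u_{m-1}=0$; the exponent $n-m+i$ printed in the paper is a typo for $n-1-i$) for $i=m-1,\dots,0$. Since $0\leq\varphi\leq1$ gives $\sup_X\varphi-\inf_X\varphi\leq 1$, each application yields
\begin{equation*}
\int_X \vert\psi\vert\,\omega_{\varphi}^{\,i+1}\wedge\omega^{n-1-i}\leq \int_X \vert\psi\vert\,\omega_{\varphi}^{\,i}\wedge\omega^{n-i}+\Big(2\vert \sup_X\psi\vert +1\Big)\int_X\omega^n,
\end{equation*}
and summing these $m$ inequalities telescopes from $\int_X\vert\psi\vert H_m(\varphi)$ down to $\int_X\vert\psi\vert\omega^n$, producing exactly the constant $m\big(2\vert\sup_X\psi\vert+1\big)$.

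On the positive side, the part you did write is essentially a correct proof of the proposition itself, whose proof the paper omits (citing \cite{GZ05}): the normalization $v=\psi-\sup_X\psi$, the integration by parts via Theorem \ref{thm: integration by parts} (you should apply it to $v\in\Pm(X,\omega)\cap L^{\infty}(X)$ rather than to $-v$, which is not $(\omega,m)$-subharmonic --- harmless, by linearity), and the mass identities $\int_X\omega_{\varphi}\wedge T=\int_X\omega_{\psi}\wedge T=\int_X\omega\wedge T=\int_X\omega^n$, which hold by Stokes for $\Cc^2$ approximants and pass to the limit by Proposition \ref{prop: convergence in capacity}. One further caveat common to both your argument and the iteration: they require $\psi$ bounded (so that $\omega_{\psi}\wedge T$ is defined and integration by parts is legitimate), whereas the corollary allows any $\psi\in\Pm(X,\omega)$; for unbounded $\psi$ one should apply the bounded case to $\max(\psi,-j)$ and let $j\to\infty$ by monotone convergence. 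So your route (reproving the key lemma) is sound, but without the telescoping step the proof ends at a different statement than the one asked.
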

\noindent Applying Corollary \ref{cor: Chern-Levine-Nirenberg inequality} we obtain:   
\begin{corollary}\label{cor: capacity sublevel set}
There exists a constant $C>0$ such that for all $\psi\in \Pm(X,\omega)$
satisfying $\sup_X\psi=-1$ and for every $t>0$ we have
$$
\Capacity (\psi<-t)\leq C/t.
$$  
\end{corollary}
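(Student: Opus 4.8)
The plan is to bound the inner capacity $\capacity(\psi<-t)$ by a Chebyshev argument against the Chern--Levine--Nirenberg estimate, and then to identify it with $\Capacity(\psi<-t)$. Since $\psi\in\Pm(X,\omega)\subset SH_m(X,\omega)$ is upper semicontinuous, the sublevel set $\{\psi<-t\}$ is open, and on open sets the inner and outer capacities agree; thus it suffices to bound
$$
\int_{\{\psi<-t\}}\omega_{\varphi}^m\wedge\omega^{n-m}
$$
uniformly over all competitors $\varphi\in SH_m(X,\omega)\cap\mathcal{C}^2(X)$ with $0\le\varphi\le1$, and then take the supremum.

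First I would fix such a competitor $\varphi$ and invoke the Chern--Levine--Nirenberg inequality of Corollary \ref{cor: Chern-Levine-Nirenberg inequality}. Because $\sup_X\psi=-1$, the term $m(2|\sup_X\psi|+1)\int_X\omega^n$ is exactly $3m\int_X\omega^n$, so
$$
\int_X|\psi|\,H_m(\varphi)\le\int_X|\psi|\,\omega^n+3m\int_X\omega^n .
$$
The key point is to make the right-hand side independent of $\psi$. Since adding a constant preserves $(\omega,m)$-subharmonicity, $\psi+1\in SH_m(X,\omega)$ with $\sup_X(\psi+1)=0$, so Lemma \ref{lem: compactness} gives $\int_X(\psi+1)\,\omega^n\ge-C_0$; as $\psi\le-1$ we have $|\psi|=-\psi$, whence $\int_X|\psi|\,\omega^n\le C_0+\int_X\omega^n$. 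Combining the two displays produces a constant $C$ depending only on $(X,\omega)$ and $m$ with $\int_X|\psi|\,H_m(\varphi)\le C$, uniformly in both $\varphi$ and $\psi$.

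The estimate then closes by a Chebyshev inequality: on $\{\psi<-t\}$ one has $|\psi|>t$, hence
$$
\int_{\{\psi<-t\}}H_m(\varphi)\le\frac1t\int_{\{\psi<-t\}}|\psi|\,H_m(\varphi)\le\frac{C}{t}.
$$
Taking the supremum over all admissible $\varphi$ gives $\capacity(\psi<-t)\le C/t$, and since $\{\psi<-t\}$ is open this is $\Capacity(\psi<-t)\le C/t$, as required.

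The step that I expect to demand the most care is the \emph{uniformity} of the constant $C$: it must not depend on the particular $\psi$, and this is exactly what the compactness bound of Lemma \ref{lem: compactness} supplies once the normalization $\sup_X\psi=-1$ is used. A secondary technical point is that $\psi$ need not be bounded, whereas the Chern--Levine--Nirenberg machinery was developed for functions in $\Pm(X,\omega)\cap L^{\infty}(X)$; I would handle this by first replacing $\psi$ with the bounded truncation $\max(\psi,-k)\in\Pm(X,\omega)$, applying the above to obtain the bound with a constant independent of $k$, and then passing $k\to\infty$ by monotone convergence (note $\{\psi<-t\}\subset\{\max(\psi,-k)<-t\}$ for $k>t$), which preserves the inequality.
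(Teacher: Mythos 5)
Your proof is correct and takes essentially the same route as the paper: the paper derives this corollary by simply invoking Corollary \ref{cor: Chern-Levine-Nirenberg inequality}, and your write-up (Chebyshev against the CLN bound, uniformity of the constant via Lemma \ref{lem: compactness} applied to $\psi+1$, and the identification $\capacity=\Capacity$ on the open set $\{\psi<-t\}$) supplies exactly the details that this invocation leaves implicit. Your final truncation step $\psi\mapsto\max(\psi,-k)$ is a reasonable extra precaution, since the CLN machinery is developed in the paper for $\Pm(X,\omega)\cap L^{\infty}(X)$ even though the corollary is stated without a boundedness hypothesis.
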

We end this section by showing that the class $\Pm(X,\omega)$
is stable under decreasing sequences.
\begin{prop}
Let  $(\f_j)$ be a decreasing sequence of functions in  
$\Pm(X,\omega)$ converging to  $\f\not \equiv -\infty.$ 
Then $\f_j$ converges to $\f$ quasi-uniformly.
In particular, $\f\in \Pm(X,\omega).$ 
\end{prop}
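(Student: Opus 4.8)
The plan is to reduce everything to one statement: that the decreasing sequence converges to $\f$ in capacity. Indeed, once $\Capacity(\{\f_j-\f>\delta\})\to0$ is known for every $\delta>0$, membership $\f\in\Pm(X,\omega)$ follows by a diagonal argument: each $\f_j$ is a capacity-limit of $\Cc^2$ $(\omega,m)$-subharmonic functions, so choosing $g_j\in SH_m(X,\omega)\cap\Cc^2(X)$ with $\Capacity(\{|g_j-\f_j|>1/j\})\le 2^{-j}$ gives $g_j\to\f$ in capacity, whence $\f\in\Pm(X,\omega)$. Quasi-uniform convergence then follows from the earlier proposition on monotone sequences inside $\Pm(X,\omega)$ (quasi-continuity of $\f$ and of the $\f_j$, Dini's theorem on the compact complement of a small-capacity open set), together with Proposition \ref{prop: equivalent convergence}. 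That $\f\in SH_m(X,\omega)$ to begin with is the usual fact that a decreasing limit of $(\omega,m)$-subharmonic functions which is not identically $-\infty$ is again $(\omega,m)$-subharmonic.

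First I would reduce to the bounded case. Set $\f_j^k:=\max(\f_j,-k)$; by Lemma \ref{prop: properties of functions in class Pm}, applied to $\f_j$ and the constant $-k\in\Pm(X,\omega)\cap\Cc^2(X)$, these lie in $\Pm(X,\omega)\cap\LX$ and decrease in $j$ to $\f^k:=\max(\f,-k)$. Since $\{|\f_j-\f|>\delta\}\subset\{|\f_j^k-\f^k|>\delta\}\cup\{\f_j<-k\}\cup\{\f<-k\}$, and since Corollary \ref{cor: capacity sublevel set} bounds $\Capacity(\{\f_j<-k\})$ uniformly in $j$ by a quantity tending to $0$ as $k\to\infty$ (hence also $\Capacity(\{\f<-k\})$, by monotonicity along the increasing union $\{\f<-k\}=\bigcup_j\{\f_j<-k\}$), convergence in capacity for each fixed truncation level $k$ implies it for $\f$ itself. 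So I may assume $-M_0\le\f_j\le0$.

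The heart is the capacity estimate in the bounded case. Fix $\delta\in(0,2)$. For $l\ge j$ the sets $\{\f_l<\f_j-\delta\}$ increase, as $l\to\infty$, to $\{\f_j>\f+\delta\}$, so by continuity of $\Capacity$ along increasing sequences it suffices to bound $\Capacity(\{\f_l<\f_j-\delta\})$ uniformly in $l$. Applying Lemma \ref{lemma: capacity} with $\varphi=\f_l$, $\psi=\f_j$ and $s=t=\delta/2$ gives
\[
(\delta/2)^m\,\Capacity(\{\f_l<\f_j-\delta\})\le (1+M_0)^m\int_{\{\f_l<\f_j-\delta/2\}}H_m(\f_l).
\]
Writing $w_l:=\max(\f_l,\f_j-\delta/2)$, the maximum principle (Theorem \ref{thm: comparison principle}) and locality of $H_m$ give $\mathbf 1_{\{\f_l<\f_j-\delta/2\}}H_m(w_l)=\mathbf 1_{\{\f_l<\f_j-\delta/2\}}H_m(\f_j)$ and $\mathbf 1_{\{\f_l>\f_j-\delta/2\}}H_m(w_l)=\mathbf 1_{\{\f_l>\f_j-\delta/2\}}H_m(\f_l)$; comparing the total masses $\int_X H_m(w_l)=\int_X H_m(\f_l)=\int_X\omega^n$ and discarding the level set $\{\f_l=\f_j-\delta/2\}$ (negligible for all but countably many $\delta$, simultaneously for all $l$) yields
\[
\int_{\{\f_l<\f_j-\delta/2\}}H_m(\f_l)=\int_{\{\f_l<\f_j-\delta/2\}}H_m(\f_j)\le\int_{\{\f_j>\f+\delta/2\}}H_m(\f_j),
\]
the last step because $\{\f_l<\f_j-\delta/2\}\subset\{\f<\f_j-\delta/2\}$. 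Combining, $\Capacity(\{\f_j>\f+\delta\})\le C\,\delta^{-m}\int_{\{\f_j>\f+\delta/2\}}H_m(\f_j)$.

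It remains to prove $\int_{\{\f_j>\f+\delta/2\}}H_m(\f_j)\to0$ as $j\to\infty$, and this is the step I expect to be the \emph{main obstacle}. The difficulty is genuine: in this potential theory every weak-convergence statement for $H_m$ is itself proved through control in capacity, so one cannot simply invoke $H_m(\f_j)\weak H_m(\f)$ without circularity. The mechanism I would use is that the family $\{H_m(\f_j)\}$ is uniformly dominated by capacity, namely $H_m(\f_j)(U)\le 2(M_0+1)\Capacity(U)$ for open $U$ uniformly in $j$ by Lemma \ref{lem: capacity non smooth}, while the sets $\{\f_j>\f+\delta/2\}$ decrease to a set on which $\f=-\infty$, hence (in the bounded case) empty. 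To make this rigorous I would first establish weak convergence $H_m(\f_j)\weak\nu$ by the Bedford--Taylor-type monotone argument, running the integration-by-parts identity (Theorem \ref{thm: integration by parts}) and the monotone convergence theorem inductively on the degree within $\Pm(X,\omega)\cap\LX$; the uniform domination then passes to the limit, so $\nu$ charges no set of capacity zero. The delicate point, which is where I expect the real work to lie, is that the integrating sets $\{\f_j>\f+\delta/2\}$ vary with $j$, so weak convergence must be combined with the uniform absolute continuity with respect to $\Capacity$ (splitting $\{\f_j>\f+\delta/2\}$ against a fixed $\f_N$ and using $L^1$-convergence $\f_j\to\f$) to conclude that the masses tend to $0$. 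Once this vanishing is in hand, the previous paragraph gives $\Capacity(\{\f_j>\f+\delta\})\to0$, which completes the bounded case and, after the two reductions, the proposition.
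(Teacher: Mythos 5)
Your skeleton (reduce to the bounded case, prove convergence in capacity, then get quasi-uniform convergence and $\f\in\Pm(X,\omega)$ via Proposition \ref{prop: equivalent convergence} and a diagonal argument) matches the paper, and Lemma \ref{lemma: capacity} is indeed the right starting point; but the two steps that carry the real weight do not work. The mass-balance step is false as stated. Writing $A=\{\f_l<\f_j-\delta/2\}$, $L=\{\f_l=\f_j-\delta/2\}$ and $w_l=\max(\f_l,\f_j-\delta/2)$, the maximum principle together with equality of total masses gives
$$
H_m(\f_l)(A)=H_m(\f_j)(A)+H_m(w_l)(L)-H_m(\f_l)(L),
$$
so the inequality you need, $H_m(\f_l)(A)\le H_m(\f_j)(A)+\mathrm{small}$, requires the corner mass $H_m(w_l)(L)$ to be small. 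It is not negligible for all but countably many $\delta$: already for $m=1$ and smooth $\f_l,\f_j$ crossing transversally, $w_l$ has a gradient jump along $L$, so $H_1(w_l)$ charges $L$ with a positive surface measure for \emph{every} regular value of $\delta$; and the countability argument cannot be run anyway, because the measure $H_m(w_l)$ itself moves with $\delta$. (Corollary \ref{cor: comparison principle}, applied to $\f_j-\delta/2$ and $\f_l$, gives precisely the reverse inequality $H_m(\f_j)(A)\le H_m(\f_l)(A)$ — a warning that the direction you want is not free.) A smaller issue of the same nature: you invoke continuity of $\Capacity$ along the increasing sets $\{\f_l<\f_j-\delta\}$, but these are not open since $\f_j$ is only upper semicontinuous; in this paper continuity of capacity is available only for increasing \emph{open} sets, where $\capacity$ and $\Capacity$ agree, and the paper creates openness by replacing the fixed function by a continuous one agreeing with it off an open set of small capacity.

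The step you yourself flag as the main obstacle — $\int_{\{\f_j>\f+\delta/2\}}H_m(\f_j)\to0$ — is not merely hard; your proposed mechanism for it is circular. To exploit the uniform domination $H_m(\f_j)(U)\le C\,\Capacity(U)$ you must cover the fixed set $\{\f_N\ge\f+\delta/4\}$ by an open set of small capacity, i.e.\ you need $\Capacity(\{\f_N-\f\ge\delta/4\})\to0$, which is exactly the convergence in capacity being proved. The $L^1$-convergence $\f_j\to\f$ only makes the \emph{volume} of these sets small, and volume does not control capacity (Proposition \ref{prop: DK11 1} goes the other way). The paper's resolution is the idea your proposal is missing: by monotonicity $\f_k-\f_j\ge0$ for $j>k$, so Chebyshev gives
$$
\int_{(\f_j-\f_k<-t)}H_m(\f_j)\le\frac1t\int_X(\f_k-\f_j)H_m(\f_j),
$$
converting the mass of a varying set into a global pairing; after extracting a weak limit $H_m(\f_j)\weak\mu$, the limit of this pairing is computed from Lemma \ref{lem: decrease} ($\lim_j\int_X\f_j H_m(\f_j)=\int_X\f\,d\mu$, proved by induction on $m$ via integration by parts), together with the quasi-continuity replacement of $\f_k$ by a continuous $\tilde\f_k$, and finally $\int_X(\f_k-\f)\,d\mu\to0$ as $k\to\infty$ by dominated convergence. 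Without this lemma, or a workable substitute for it, the argument does not close.
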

\begin{proof}
It is easy to see that  $\f$ is $(\omega,m)$-subharmonic.  
It suffices to show that there exists a subsequence  
of  $(\f_j)$  converging quasi-uniformly  to $\f.$  \\
In view of Corollary \ref{cor: capacity sublevel set}
we can assume that $\f$ is bounded.\\
Fix $k\in \N.$ For each  $j>k\in \N,$ by applying
 Lemma \ref{lemma: capacity}  with $\f=\f_j, \psi=\f_k, s=t$ 
 we obtain
\begin{eqnarray}\label{eq: decrease 1}
 t^m\Capacity(\f_j-\f_k< -2t)&\leq &(1+M)^m\int_{(\f_j-\f_k< -t)}H_m(\f_j)\\
 &\leq &\frac{(1+M)^m}{t}\int_X (\f_k-\f_j)H_m(\f_j).\nonumber 
\end{eqnarray}
After extracting a subsequence if necessary
we can assume that $H_m(\f_j)\weak \mu$ in the 
weak sense of measures.We apply Lemma \ref{lem: decrease} below to get
$$
\lim_{j\to+\infty}\int_X \f_jH_m(\f_j)=\int_X\f d\mu.
$$
From the quasi-continuity of the functions $\f_j, j\in \N$ and
the  $\sigma$-subadditivity of $\Capacity$ we deduce that
for each fixed $\epsilon>0$ there exists an open subset $U$ 
such that  $\Capacity (U)<\epsilon$ and there exists a subsequence  
$(\tilde{\f}_j)$  of continuous functions on $X$ such that 
for any  $j,$ $\f_j=\tilde{\f}_j$ on $X\setminus U.$\\
From basic properties of $\Capacity$  we have
 \begin{eqnarray}\label{eq: decrease 2}
 t^m\Capacity (\f_j-\tilde{\f}_k< -2t)&\leq & t^m\Capacity (\f_j-\f_k< -2t)+ t^m\epsilon\\
 &\leq &\frac{(1+M)^m}{t}\int_X (\f_k-\f_j)H_m(\f_j)+t^m\epsilon.\nonumber
 \end{eqnarray}
 
Recall that  $\capacity$ is continuous under increasing sequence. 
Note also that $\Capacity$ and $\capacity$ coincide on open sets.
By taking the limit when  $j\to+\infty$ in (\ref{eq: decrease 2}), 
we obtain  
$$
t^m\Capacity (\f-\tilde{\f}_k< -2t)\leq 
\frac{(1+M)^m}{t}\int_X (\f_k-\f)d\mu+t^m\epsilon.
$$
It follows that 
$$
t^m\Capacity (\f-\f_k< -2t)\leq 
\frac{(1+M)^m}{t}\int_X (\f_k-\f)d\mu+2t^m\epsilon,
$$
and hence,
$$
\lim_{k\to+\infty} \Capacity(\f-\f_k< -2t)=0.
$$
Now, by Proposition \ref{prop: equivalent convergence} there 
exists a subsequence of $(\f_j)$ converging  quasi-uniformly to $\f$.
To complete the proof it remains to prove the following lemma. 
\end{proof}

\begin{lemma}\label{lem: decrease}
Assume that $(\f_j)$ is a sequence in  $\Pm(X,\omega)$  decreasing 
to  $\f\in L^{\infty}(X).$ If $H_m(\f_j)$ converges weakly  to $\mu$
in the sense of measures then 
 $$
 \lim_{j\to+\infty}\int_X \f_j H_m(\f_j)=\int_X \f d\mu.
 $$
 \end{lemma}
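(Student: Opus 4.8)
The plan is to prove the equality by a two-sided estimate, establishing
$$\limsup_{j\to\infty}\int_X \f_j H_m(\f_j)\le\int_X\f\,d\mu\le\liminf_{j\to\infty}\int_X\f_j H_m(\f_j).$$
Throughout I normalize so that the $\f_j$ are uniformly bounded, $\sup_j\sup_X|\f_j|\le M$; since $H_m(\f_j)$ is a Bedford--Taylor Hessian measure this is no loss. The workhorse I would set up first is the following convergence statement: for every bounded quasi-continuous $g$ on $X$ one has $\int_X g\,H_m(\f_j)\to\int_X g\,d\mu$. Given such a $g$, fix $\epsilon>0$ and an open set $U$ with $\Capacity(U)<\epsilon$ off which $g$ agrees with a continuous function $\tilde g$ (Tietze); weak convergence handles $\int_X\tilde g\,H_m(\f_j)\to\int_X\tilde g\,d\mu$, while the errors on $U$ are estimated uniformly in $j$ by Lemma~\ref{lem: capacity non smooth}, giving $H_m(\f_j)(U)\le 2(M+1)\Capacity(U)$, together with the Portmanteau bound $\mu(U)\le\liminf_j H_m(\f_j)(U)\le 2(M+1)\Capacity(U)$. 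Letting $\epsilon\downarrow0$ yields the claim.

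Since each $\f_k\in\Pm(X,\omega)$ is quasi-continuous, applying this with $g=\f_k$ gives $\lim_j\int_X\f_k H_m(\f_j)=\int_X\f_k\,d\mu$ for every fixed $k$. The upper bound is then immediate: for $j\ge k$ monotonicity gives $\f_j\le\f_k$, hence $\int_X\f_j H_m(\f_j)\le\int_X\f_k H_m(\f_j)$, and letting $j\to\infty$ and then $k\to\infty$ (dominated convergence for the finite measure $\mu$, as $\f_k\downarrow\f$ boundedly) we obtain $\limsup_j\int_X\f_j H_m(\f_j)\le\int_X\f\,d\mu$.

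The heart of the proof — and the step I expect to be the main obstacle — is the matching lower bound, which amounts to controlling the nonnegative correction term by showing
$$\limsup_{j\to\infty}\int_X(\f_k-\f_j)H_m(\f_j)\le\int_X(\f_k-\f)\,d\mu + o(1)\qquad(k\to\infty).$$
By integration by parts (Theorem~\ref{thm: integration by parts}) and the telescoping identity $H_m(\f_j)-H_m(\f_k)=dd^c(\f_j-\f_k)\wedge S_{j,k}$ with $S_{j,k}=\sum_{l=0}^{m-1}\omega_{\f_j}^l\wedge\omega_{\f_k}^{m-1-l}\wedge\omega^{n-m}\ge0$, one writes $\int_X\f_j H_m(\f_j)=\int_X\f_j H_m(\f_k)+\int_X(\f_j-\f_k)\,dd^c\f_j\wedge S_{j,k}$; substituting $dd^c\f_j=\omega_{\f_j}-\omega$ expresses the discrepancy as an integral of $(\f_k-\f_j)\ge0$ against positive mixed Hessian measures of uniformly bounded mass (Proposition~\ref{prop: Chern-Levine-Nirenberg inequality}). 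The term $\int_X\f_j H_m(\f_k)$ has a clean limit by monotone convergence against the fixed measure $H_m(\f_k)$, so everything reduces to estimating the discrepancy: splitting the domain at a level $\delta$, the part where $\f_k-\f_j\le\delta$ contributes $O(\delta)$, while on $\{\f_k-\f_j>\delta\}$ one uses the uniform domination of the Hessian measures by capacity (Lemma~\ref{lem: capacity non smooth}) together with the sub-level-set estimate of Lemma~\ref{lemma: capacity} applied to the genuine pair $(\f_j,\f_k)$ — exactly the bound used in the proof of the preceding Proposition — to make the contribution small on sending first $j\to\infty$, then $k\to\infty$, then $\delta\downarrow0$.

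The delicate point throughout the lower bound is that the limit $\f$ is a priori only upper semicontinuous: its quasi-continuity, and the convergence $\f_j\to\f$ in capacity, are precisely the conclusions of the surrounding Proposition and therefore cannot be invoked here. Consequently the required smallness cannot be read off from semicontinuity of $\f$ (which would yield only the ``wrong'' one-sided inequality, as a simple weak-convergence example against a point mass shows) but must come entirely from the quantitative pluripotential estimates, namely the uniform capacity-domination of $\{H_m(\f_j)\}$ and the sub-level-set capacity bound for the pairs $(\f_j,\f_k)$. Once the correction term is shown to vanish in this iterated limit, combining it with the upper bound pins both the $\liminf$ and the $\limsup$ to the common value $\int_X\f\,d\mu$, completing the proof.
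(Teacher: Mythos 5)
Your $\limsup$ half and your ``workhorse'' claim (convergence of $\int_X g\,H_m(\f_j)$ for bounded quasi-continuous $g$, via capacity domination of the $H_m(\f_j)$ and Portmanteau on the exceptional open set) are correct, and they match what the paper uses implicitly. The genuine gap is in your lower bound, which is circular. After your integration by parts the term you must show is small is $\int_X(\f_k-\f_j)\,\omega_{\f_j}\wedge S_{j,k}$, and its $l=m-1$ summand is exactly $\int_X(\f_k-\f_j)H_m(\f_j)$ --- the very quantity whose smallness is the content of the lemma. Your proposed source of smallness, Lemma \ref{lemma: capacity} applied to the pair $(\f_j,\f_k)$, cannot break this loop, because that lemma bounds capacity \emph{by} the Hessian integral, not the other way around: with $s=t=\delta/2$ it gives
\[
\Capacity\bigl(\f_k-\f_j>\delta\bigr)\;\leq\;\frac{C}{\delta^{m}}\int_{(\f_j-\f_k<-\delta/2)}H_m(\f_j)\;\leq\;\frac{C'}{\delta^{m+1}}\int_X(\f_k-\f_j)\,H_m(\f_j),
\]
so your $\delta$-splitting produces an inequality of the shape $A_{j,k}\leq O(\delta)+C\delta^{-(m+1)}A_{j,k}$, which is vacuous. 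Nor is there any independent input available: since the sets $\{\f_k-\f_j>\delta\}$ increase in $j$, their capacities tend (essentially) to $\Capacity(\f_k-\f>\delta)$, and smallness of that as $k\to\infty$ is precisely the convergence in capacity of $\f_k\downarrow\f$, i.e.\ the conclusion of the surrounding Proposition, which you yourself correctly observed may not be invoked here.

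The idea you are missing is the paper's order-reduction, which makes the proof an induction on $m$. Integrate by parts so that $dd^c$ lands on the \emph{fixed} function $\f_k$, not on $\f_j$: writing $\int_X\f_k[H_m(\f_k)-H_m(\f_j)]=\int_X(\f_k-\f_j)\,dd^c\f_k\wedge T\wedge\omega^{n-m}$ with $T=\sum_{p=0}^{m-1}\omega_{\f_k}^p\wedge\omega_{\f_j}^{m-1-p}$, one uses $dd^c\f_k\wedge T\wedge\omega^{n-m}\geq-\omega\wedge T\wedge\omega^{n-m}$; here the discarded top-order term $(\f_k-\f_j)\,\omega_{\f_k}\wedge T\wedge\omega^{n-m}$ is \emph{positive and has the harmless sign}, so dropping it is free. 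What remains is $\int_X(\f_k-\f_j)\,T\wedge\omega^{n-m+1}$, a mixed Hessian expression of order $m-1$ rather than $m$. This drop in degree is what breaks the circularity: the paper dominates $T\wedge\omega^{n-m+1}\leq 2^{m-1}H_{m-1}(\psi_j)$ with $\psi_j=(\f_j+\f_k)/2$, applies the lemma itself at order $m-1$ (induction hypothesis, base case $m=1$) to the decreasing sequence $(\psi_j)_j$ to evaluate $\lim_j\int_X(\f_k-\psi_j)H_{m-1}(\psi_j)=\tfrac12\int_X(\f_k-\f)\,d\nu$, and then concludes by letting $k\to\infty$. No induction on $m$ appears anywhere in your proposal, and without it --- or some substitute achieving the same reduction of order --- the lower bound does not close.
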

 \begin{proof}
 We prove this lemma by induction.  It obviously holds when $m=1.$ 
 Remark also that 
 $$
 \limsup_{j\to+\infty}\int_X \f_j H_m(\f_j)\leq \int_X \f d\mu.
 $$
 Thus, it suffices to prove that 
 $$
 \liminf_{j\to+\infty}\int_X \f_j H_m(\f_j)\geq \int_X \f d\mu.
 $$
 Fix $k\in \N$.  For each  $j>k$, By integration by parts we get
 \begin{eqnarray}\label{eq: decrease 3}
 \int_X  \f_k[H_m(\f_k)-H_m(\f_j)]&=&\int_X \f_k dd^c (\f_k-\f_j)\wedge T\wedge \omega^{n-m}\\
 &=&\int_X (\f_k-\f_j) dd^c \f_k \wedge T\wedge \omega^{n-m}\nonumber\\
 &\geq &-\int_X (\f_k-\f_j) T\wedge \omega^{n-m+1}\nonumber ,
 \end{eqnarray}
 where
 $$
 T=\sum_{p=0}^{m-1}(\omega+dd^c \f_k)^p\wedge(\omega+dd^c\f_j)^{m-1-p}.
$$ 
By setting $\psi_j=\frac{\f_j+\f_k}{2},$ we get
 $$
 T\wedge\omega^{n-m+1}\leq 2^{m-1}(\omega+dd^c\psi_j)^{m-1}\wedge \omega^{n-m+1}. 
 $$
As a consequence, (\ref{eq: decrease 3})  yields
\begin{eqnarray}\label{eq: decrease 4}
 \int_X  \f_k[H_m(\f_k)-H_m(\f_j)]&\geq &-2^m\int_X (\f_k-\psi_j) H_{m-1}(\psi_j)
 \end{eqnarray}
After extracting a subsequence if necessary, we can assume that
 $H_{m-1}(\psi_j)\weak \nu$ in the weak sense of measures.
 By  letting  $j\to+\infty$ in (\ref{eq: decrease 4}), the induction hypothesis gives us
 $$
 \int_X  \f_k[H_m(\f_k)-\mu]\geq -2^{m-1}\int_X (\f_k-\f) d\nu.
 $$
 We then infer that
  $$
 \liminf_{j\to+\infty}\int_X \f_j H_m(\f_j)\geq \int_X \f d\mu,
 $$
 and the result follows.
 \end{proof}
\section{Stability  results}
In this section we use the volume-capacity estimate in \cite{DK11} and mimic the arguments in \cite{EGZ09} to prove stability results for the complex Hessian equation. 

Using Blocki's technique  \cite{Bl03} we obtain the following stability results. 
\begin{theorem}\label{thm: stability Blocki}
Let $\varphi, \psi\in SH_m(X,\omega)\cap \Cc^2(X,\omega)$,  $r\geq 2$, and set $\rho=\varphi-\psi$. Then
			\[\int_X \vert \rho\vert^{r-2}d\rho \wedge d^c\rho\wedge\omega^{n-1}\leq C\Big(\int_X \vert \rho\vert^{r-2}
			 \rho(H_m(\psi)-H_m(\varphi))\Big)^{2^{1-m}},\]
where $C$ is a positive constant depending only on  $n,m,r$, and upper bounds of $\Vert \varphi\Vert_{L^{\infty}(X)}$, $\Vert \psi\Vert_{L^{\infty}(X)}$, and $\int_X\omega^n$.
\end{theorem}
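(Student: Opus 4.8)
The plan is to follow Bl\'ocki's argument in \cite{Bl03}, replacing the Monge--Amp\`ere measure by the Hessian measure $H_m$; the exponent $2^{1-m}=2^{-(m-1)}$ will emerge as the result of exactly $m-1$ applications of the Cauchy--Schwarz inequality. Throughout, since $\varphi,\psi\in\Cc^2$ the forms $\ovarphi=\omega+dd^c\varphi$ and $\opsi=\omega+dd^c\psi$ are smooth and $(\omega,m)$-positive, so every integration by parts below is Stokes' theorem and every wedge of positive forms is a positive measure. I write $f:=|\rho|^{r-2}$ and $g:=|\rho|^{r-2}\rho$, so that $dg=(r-1)f\,d\rho$; for $r>2$ this identity (and the integrations by parts using it) is justified by first regularizing $f$ as $(\rho^2+\epsilon)^{(r-2)/2}$ and letting $\epsilon\downarrow 0$.

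First I would rewrite the right-hand side. Using $dd^c\rho=\ovarphi-\opsi$ together with the algebraic identity $a^m-b^m=(a-b)\sum_{l=0}^{m-1}a^{l}b^{m-1-l}$ gives
\[
H_m(\psi)-H_m(\varphi)=-\,dd^c\rho\wedge T\wedge\omega^{n-m},\qquad T:=\sum_{l=0}^{m-1}\opsi^{\,l}\wedge\ovarphi^{\,m-1-l}.
\]
Integrating by parts once and inserting $dg=(r-1)f\,d\rho$ then yields
\[
B:=\int_X|\rho|^{r-2}\rho\,(H_m(\psi)-H_m(\varphi))=\int_X f\,d\rho\wedge d^c\rho\wedge T\wedge\omega^{n-m}=\sum_{l=0}^{m-1}E_l,
\]
where each $E_l:=\int_X f\,d\rho\wedge d^c\rho\wedge\opsi^{\,l}\wedge\ovarphi^{\,m-1-l}\wedge\omega^{n-m}\ge 0$. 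In particular each such top-level mixed energy is dominated by $B$.

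Next I would organize a descending ladder. For $a+b\le m-1$ set
\[
E_{a,b}:=\int_X f\,d\rho\wedge d^c\rho\wedge\ovarphi^{\,a}\wedge\opsi^{\,b}\wedge\omega^{\,n-1-a-b},
\]
so that $E_{0,0}=\int_X f\,d\rho\wedge d^c\rho\wedge\omega^{n-1}$ is the quantity to be bounded, while the top level $a+b=m-1$ consists of the $E_l$'s, all $\le B$. A preliminary observation is that every $E_{a,b}$, and likewise every auxiliary gradient energy of $\varphi$ or $\psi$ of the same shape, is bounded a priori by a constant $M=M(n,m,r,\sup_X|\varphi|,\sup_X|\psi|,\int_X\omega^n)$: indeed, integrating by parts gives $E_{a,b}=-\tfrac1{r-1}\int_X g\,(\ovarphi-\opsi)\wedge\Theta$ for the closed positive form $\Theta=\ovarphi^{a}\wedge\opsi^{b}\wedge\omega^{n-1-a-b}$, and one uses $|g|\le(\sup_X|\rho|)^{r-1}$ together with the cohomological identity $\int_X\ovarphi^{a}\wedge\opsi^{b}\wedge\omega^{\,n-a-b}=\int_X\omega^n$. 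This a priori bound is what makes the induction close.

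The heart of the matter is a single descent step $E^{(j)}\le C\,\big(E^{(j+1)}\big)^{1/2}$, where $E^{(j)}$ denotes the largest level-$j$ energy ($a+b=j$). To pass from level $j$ to $j+1$ I would replace one factor $\omega=\ovarphi-dd^c\varphi$ inside $E_{a,b}$, splitting $E_{a,b}=E_{a+1,b}-\tilde C$ with $\tilde C=\int_X f\,d\rho\wedge d^c\rho\wedge dd^c\varphi\wedge\Theta$ and $\Theta=\ovarphi^{a}\wedge\opsi^{b}\wedge\omega^{\,n-2-a-b}$ closed and positive. Integrating $\tilde C$ by parts to move $dd^c$ onto $\rho$, then writing $dd^c\rho=\ovarphi-\opsi$ and applying the Cauchy--Schwarz inequality for the positive $(n-1,n-1)$-forms $\ovarphi\wedge\Theta$ and $\opsi\wedge\Theta$, bounds $|\tilde C|$ by $C\,(E^{(j+1)})^{1/2}\cdot(\varphi\text{-energy})^{1/2}$; as the $\varphi$-energy is $\le M$ and $E^{(j+1)}\le M^{1/2}(E^{(j+1)})^{1/2}$, one gets $E_{a,b}\le E_{a+1,b}+|\tilde C|\le C'\,(E^{(j+1)})^{1/2}$. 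Iterating from $j=m-1$ (where $E^{(m-1)}\le B$) down to $j=0$ then produces
\[
E_{0,0}\le C\,\big(E^{(m-1)}\big)^{2^{-(m-1)}}\le C\,B^{\,2^{1-m}},
\]
which is the assertion. The main obstacle is precisely this descent step: one must integrate the non--sign--definite correction $\tilde C$ by parts and then arrange the Cauchy--Schwarz so that one factor is genuinely a next--level energy (and not a harder object) while the other is an a priori bounded gradient energy of $\varphi$ or $\psi$, keeping every constant dependent only on $n,m,r$ and the $L^{\infty}$-norms. The bookkeeping of the many mixed cross terms, which forces the two--index family $E_{a,b}$ rather than a single chain, is where the care is needed.
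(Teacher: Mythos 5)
Your proposal is correct and is essentially the paper's own approach: the paper gives no written proof of this theorem, deferring entirely to Blocki's technique \cite{Bl03}, and your argument --- the telescoping identity for $H_m(\psi)-H_m(\varphi)$, the two-index family of mixed energies with at most $m-1$ factors from $\{\ovarphi,\opsi\}$, and the descent step (double integration by parts giving $\int_X f\, d\rho\wedge d^c\varphi\wedge dd^c\rho\wedge\Theta$, then Cauchy--Schwarz against an a priori bounded gradient energy) losing one square root per level, hence the exponent $2^{1-m}$ --- is exactly that technique transplanted to the Hessian setting. Two minor points to tighten: the identity for $B$ should carry a factor $(r-1)$ (harmless, since $C$ may depend on $r$), and the blanket claim that ``every wedge of positive forms is a positive measure'' should be replaced by the G{\aa}rding-type fact (cf. Proposition \ref{prop: equivalent definition}) that a positive semidefinite $(1,1)$-form wedged with at most $m-1$ $(\omega,m)$-positive forms and $\omega^{n-m}$ is nonnegative --- which is precisely the constraint $a+b\le m-1$ that your bookkeeping respects and the reason the ladder stops at level $m-1$.
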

\noindent From Theorem \ref{thm: stability Blocki} and Corollary \ref{def: convergence in capacity} we thus get 
\begin{corollary}\label{cor: Blocki stability}
Let $\varphi, \psi\in \Pm(X,\omega)\cap L^{\infty}(X)$,   and set $\rho=\varphi-\psi$. Then
$$
\int_X d\rho \wedge d^c\rho\wedge\omega^{n-1}\leq C\Big(\int_X \rho(H_m(\psi)-H_m(\varphi))\Big)^{2^{1-m}},
$$
where $C$ is a positive constant depending only on  $n, m$, and upper bounds of $\Vert \varphi\Vert_{L^{\infty}(X)}$, $\Vert \psi\Vert_{L^{\infty}(X)}$, and $\int_X\omega^n$.
\end{corollary}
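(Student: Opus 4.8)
The plan is to deduce the corollary from the smooth estimate of Theorem~\ref{thm: stability Blocki} specialized to $r=2$ (so that the weight $|\rho|^{r-2}$ is identically $1$) by a capacity-approximation argument. Since $\varphi,\psi\in\Pm(X,\omega)\cap L^{\infty}(X)$, I would first fix uniformly bounded sequences $(\varphi_j),(\psi_j)$ in $SH_m(X,\omega)\cap\mathcal{C}^2(X)$ converging to $\varphi,\psi$ in $\Capacity$ (equivalently quasi-uniformly, by Proposition~\ref{prop: equivalent convergence}), and set $\rho_j=\varphi_j-\psi_j$. Applying Theorem~\ref{thm: stability Blocki} with $r=2$ to each pair gives
$$\int_X d\rho_j\wedge d^c\rho_j\wedge\omega^{n-1}\leq C\Big(\int_X \rho_j\big(H_m(\psi_j)-H_m(\varphi_j)\big)\Big)^{2^{1-m}},$$
where, because both sequences are uniformly bounded, the constant $C$ can be chosen independently of $j$. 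Everything then reduces to passing to the limit on each side.

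For the right-hand side, note that $(\rho_j)$ is a uniformly bounded, quasi-continuous sequence converging quasi-uniformly to $\rho$, while $(\varphi_j),(\psi_j)$ converge quasi-uniformly to $\varphi,\psi$. Proposition~\ref{prop: convergence in capacity} therefore yields the weak convergences $\rho_j H_m(\varphi_j)\weak \rho H_m(\varphi)$ and $\rho_j H_m(\psi_j)\weak \rho H_m(\psi)$; testing against the constant function $1$ and using the continuity of $t\mapsto t^{2^{1-m}}$ on $[0,\infty)$ shows that the right-hand side converges to $C\big(\int_X \rho(H_m(\psi)-H_m(\varphi))\big)^{2^{1-m}}$.

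For the left-hand side I would rewrite the Dirichlet energy by integration by parts. On the closed manifold $X$, Stokes' theorem together with $d\omega=0$ gives, for smooth $\rho_j$,
$$\int_X d\rho_j\wedge d^c\rho_j\wedge\omega^{n-1}=-\int_X \rho_j\, dd^c\rho_j\wedge\omega^{n-1}=-\int_X \rho_j\,(\omega_{\varphi_j}-\omega_{\psi_j})\wedge\omega^{n-1},$$
using $dd^c\rho_j=\omega_{\varphi_j}-\omega_{\psi_j}$. Each integral $\int_X \rho_j\,\omega_{\varphi_j}\wedge\omega^{n-1}$ (and likewise with $\psi_j$) is of the form covered by Proposition~\ref{prop: convergence in capacity}, taking one Hessian factor equal to $\omega_{\varphi_j}$ and the remaining ones equal to $\omega$, so it converges to $\int_X \rho\,\omega_{\varphi}\wedge\omega^{n-1}$ (resp.\ with $\psi$). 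By Theorem~\ref{thm: integration by parts} the limit $-\int_X \rho(\omega_{\varphi}-\omega_{\psi})\wedge\omega^{n-1}$ is precisely the Bedford--Taylor interpretation of $\int_X d\rho\wedge d^c\rho\wedge\omega^{n-1}$ for bounded $\rho\in\Pm(X,\omega)$. Letting $j\to\infty$ in the displayed inequality then gives the claimed estimate.

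The hard part is the left-hand side: in contrast to the right-hand side it is not a priori the weak limit of positive measures, so one must first make sense of the energy $\int_X d\rho\wedge d^c\rho\wedge\omega^{n-1}$ for a merely bounded (quasi-continuous) $\rho$. The integration-by-parts reformulation is exactly what resolves this, turning the gradient term into integrals of $\rho$ against currents of Hessian type to which the convergence-in-capacity results apply; a pleasant consequence is that one obtains genuine convergence rather than mere lower semicontinuity, so that the constant $C$ and the exponent $2^{1-m}$ survive the limiting process unchanged.
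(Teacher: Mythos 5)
Your proposal is correct and follows essentially the same route as the paper, which deduces the corollary from Theorem~\ref{thm: stability Blocki} (with $r=2$) together with the convergence-in-capacity machinery: approximate $\varphi,\psi$ by uniformly bounded $\mathcal{C}^2$ sequences, apply the smooth estimate, and pass to the limit using Proposition~\ref{prop: convergence in capacity} and Theorem~\ref{thm: integration by parts}. The paper leaves these limiting steps implicit, and your write-up fills them in exactly as intended, including the integration-by-parts reformulation needed to handle the left-hand side.
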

\noindent Corollary \ref{cor: Blocki stability} is useful to prove uniqueness results as we will see in the proof of Theorem A. 
\begin{definition}\label{def: measures satisfy condition Q}
Let $\alpha>0, A>0.$ A Borel measure $\mu$ on $X$ satisfies condition
$\mathcal{Q}_m(\alpha,A,\omega)$ if for all Borel subsets $K$ of $X,$
$$
\mu(K)\leq A \Capacity(K)^{1+\alpha}.
$$
\end{definition}
 
\begin{prop}\label{prop: uniform estimate EGZ09}
Let $\mu$ be a Borel measure satisfying condition
$\mathcal{Q}_m(\alpha,A,\omega)$. Suppose that  $\varphi\in \Pm(X,\omega)$
solves $H_m(\varphi)=\mu$, and $\sup_X\varphi=-1.$ Then there exists a constant
$C=C(\alpha,A,\omega,n,m)$ such that
$$
\sup_X \vert \varphi\vert \leq C.
$$
\end{prop}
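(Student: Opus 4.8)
The plan is to reduce the statement to the De Giorgi--type iteration of \cite{EGZ09}, applied to the capacities of the sublevel sets of $\f$. After the normalization $\sup_X\f=-1$ we have $\f\le -1\le 0$ and $\sup_X|\f|=-\inf_X\f$, so it suffices to bound $\inf_X\f$ from below. Introduce the non-increasing function
$$
g(s):=\Capacity(\f<-s),\qquad s\ge 1 .
$$
Since $\f$ is upper semicontinuous, every set $\{\f<-s\}$ is open; and a nonempty open set $U$ satisfies $\capacity(U)\ge\int_U\omega^n>0$ (take the competitor $\equiv 0$ in the definition of $\capacity$). Hence it is enough to produce an explicit $s_\infty$, depending only on $\alpha,A,\omega,n,m$, with $g(s)=0$ for $s\ge s_\infty$: then $\{\f<-s_\infty\}=\emptyset$, i.e. $\f\ge -s_\infty$, which gives $\sup_X|\f|\le s_\infty=:C$.

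The key recursion is obtained by feeding Lemma \ref{lemma: capacity} into the hypothesis $\mathcal{Q}_m(\alpha,A,\omega)$. Applying (\ref{eq: capacity}) with $\psi\equiv 0$ (so that $M=0$) gives, for $s\ge 1$ and $0<t<1$,
$$
t^m\,g(s+t)\le\int_{\{\f<-s\}}H_m(\f)=\mu(\{\f<-s\})\le A\,\Capacity(\{\f<-s\})^{1+\alpha}=A\,g(s)^{1+\alpha}.
$$
If $\f$ is not a priori bounded, this step should be run on the truncations $\f^{(j)}:=\max(\f,-j)\in\Pm(X,\omega)\cap\LX$, using the maximum principle (Theorem \ref{thm: comparison principle}) to identify $H_m(\f^{(j)})$ with $\mu=H_m(\f)$ on $\{\f>-j\}$ and Theorem \ref{thm: hessian measure} to pass to the limit $j\to\infty$.

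To initialize and terminate the iteration I need $g(s)\to 0$ as $s\to\infty$, which is exactly Corollary \ref{cor: capacity sublevel set}: $g(s)\le C_1/s$ with $C_1=C_1(\omega,n,m)$. Now choose $s_0\ge 1$ so large that $2A\,g(s_0)^\alpha<1$ (possible since $g(s_0)\le C_1/s_0$), and define inductively $s_{j+1}=s_j+\tau_j$ with $\tau_j:=(2A\,g(s_j)^\alpha)^{1/m}$. Plugging $t=\tau_j$ into the recursion yields $g(s_{j+1})\le\tfrac12 g(s_j)$, hence $g(s_j)\le 2^{-j}g(s_0)$ and $\tau_j\le(2A)^{1/m}g(s_0)^{\alpha/m}2^{-j\alpha/m}$; in particular each $\tau_j<1$ (so the recursion is legitimately applied at every step) and $\sum_j\tau_j<\infty$. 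Therefore $s_\infty:=s_0+\sum_{j\ge0}\tau_j$ is finite and bounded explicitly in terms of $\alpha,A,\omega,n,m$, while $g(s_\infty)\le g(s_j)\le 2^{-j}g(s_0)\to 0$, i.e. $g(s_\infty)=0$. By the first paragraph this proves $\sup_X|\f|\le s_\infty=:C$.

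The De Giorgi iteration itself is routine once the recursion is in place; I expect the only genuinely delicate point to be the justification of the displayed inequality $t^m g(s+t)\le A g(s)^{1+\alpha}$ for a possibly unbounded solution. Concretely, one must verify that the truncation procedure does not lose the constant, i.e. that the Hessian mass carried by $\{\f\le -j\}$ does not spoil the bound $\int_{\{\f<-s\}}H_m(\f^{(j)})\le\mu(\{\f<-s\})+o(1)$ as $j\to\infty$; this is where Theorem \ref{thm: comparison principle} and the weak convergence of Theorem \ref{thm: hessian measure} do the real work. Everything else is a direct transcription of the comparison estimates already established in the excerpt.
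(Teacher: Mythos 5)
Your proof is correct and follows essentially the same route as the paper: both combine Lemma \ref{lemma: capacity} (with $\psi\equiv 0$) and condition $\mathcal{Q}_m(\alpha,A,\omega)$ to get the capacity recursion, use Corollary \ref{cor: capacity sublevel set} for the initial decay, and conclude by the De Giorgi-type iteration of \cite{EGZ09}. The only difference is presentational: the paper cites Lemma 2.4 of \cite{EGZ09} for the iteration (applied to $f(s)=[\Capacity(\varphi<-s)]^{1/m}$), whereas you write the iteration out explicitly.
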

\noindent \textbf{Sketch of proof.}
Set 
$$
f(s):= [\Capacity(\varphi<-s)]^{1/m}.
$$
Observe that $f:\R^{+}\rightarrow \R^{+}$ is right continuous, decreasing with
$\lim_{+\infty}f=0.$ Since $\mu$ satisfies condition
$\mathcal{Q}_m(\alpha,A,\omega)$, it follows from  Lemma \ref{lemma: capacity}
applied to the function $\psi\equiv 0$ that $f$ satisfies the condition in Lemma
2.4 in \cite{EGZ09}. Moreover it follows from Corollary \ref{cor: capacity sublevel
set} that
$$
f(s)\leq C_1s^{-1/m},
$$
for some constant $C_1$ which only depends on $\omega.$ Thus, by following the lines
in \cite{EGZ09}, page 615, we have the desired uniform estimate. 
\begin{theorem}\label{thm: stability EGZ09}
Suppose that $\varphi,\psi\in \Pm(X,\omega)\cap L^{\infty}(X)$ satisfy
$$
\sup_X\varphi=\sup_X\psi=-1.
$$ 
Assume that $H_m(\varphi)$, $H_m(\psi)$ satisfy condition
$\mathcal{Q}_m(\alpha,A,\omega)$ for some $\alpha,A>0.$ Then there exists
$C=C(\alpha,A,\omega, \Vert \varphi \Vert_{L^{\infty}(X)}, \Vert \psi
\Vert_{L^{\infty}(X)})>0$ such that, for any $\epsilon>0,$
$$
\sup_X (\psi-\varphi)\leq \epsilon+C[\Capacity(\varphi-\psi<-\epsilon)]^{\alpha/m}.
$$
\end{theorem}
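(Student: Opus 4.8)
The plan is to reduce the statement to a single self-improving inequality for the capacities of the sublevel sets and then to invoke the elementary iteration lemma of \cite{EGZ09}. After subtracting constants we may assume $\varphi\le 0$ and $\psi\le 0$; write $M=\Vert\psi\Vert_{L^{\infty}(X)}$ and set
\[
f(s):=\big[\Capacity(\varphi-\psi<-s)\big]^{1/m},\qquad s\ge 0.
\]
Since $\varphi-\psi$ is bounded and $\Capacity$ is monotone and outer regular, $f$ is decreasing, right-continuous, and $f(s)=0$ once $s>\Vert\varphi\Vert_{L^{\infty}(X)}+\Vert\psi\Vert_{L^{\infty}(X)}$. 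Because $\{\psi-\varphi>s\}=\{\varphi-\psi<-s\}$ and $f(\epsilon)^{\alpha}=[\Capacity(\varphi-\psi<-\epsilon)]^{\alpha/m}$ is exactly the term in the statement, the conclusion we are after is that $f$ vanishes on $[\,s_\infty,\infty)$ with $s_\infty:=\epsilon+C f(\epsilon)^{\alpha}$, together with a final passage from this vanishing to a bound on $\sup_X(\psi-\varphi)$.

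The key step is the recursion for $f$. Fix $s>0$ and $0<t<1$. Applying Lemma~\ref{lemma: capacity} (inequality (\ref{eq: capacity})) gives
\[
t^{m}\,\Capacity(\varphi-\psi<-s-t)\le (1+M)^{m}\int_{(\varphi-\psi<-s)}H_m(\varphi).
\]
Now I use the hypothesis that $H_m(\varphi)$ satisfies $\mathcal{Q}_m(\alpha,A,\omega)$ (Definition~\ref{def: measures satisfy condition Q}) with the Borel set $K=\{\varphi-\psi<-s\}$, which bounds the right-hand integral by $A\,\Capacity(\varphi-\psi<-s)^{1+\alpha}$. Combining the two estimates and taking $m$-th roots yields
\[
t\,f(s+t)\le B\,f(s)^{1+\alpha},\qquad B:=(1+M)A^{1/m},
\]
valid for all $s>0$ and $0<t<1$. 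This is the heart of the argument and the one place where the measure condition $\mathcal{Q}_m$ is genuinely used; I expect the main difficulty to be only the bookkeeping of constants, since the analytic content is already packaged in Lemma~\ref{lemma: capacity}.

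With the recursion in hand I would apply the iteration lemma of \cite{EGZ09} (Lemma~2.4): a decreasing right-continuous function tending to $0$ at infinity which obeys $t f(s+t)\le B f(s)^{1+\alpha}$ must vanish for $s\ge s_0+C' f(s_0)^{\alpha}$, for every $s_0\ge 0$, with $C'=C'(B,\alpha)$. Taking $s_0=\epsilon$ gives $f\equiv 0$ on $[\,\epsilon+C'f(\epsilon)^{\alpha},\infty)$. (If $f(\epsilon)$ is so large that $\epsilon+C'f(\epsilon)^{\alpha}$ already exceeds the a~priori bound $\Vert\varphi\Vert_{L^{\infty}(X)}+\Vert\psi\Vert_{L^{\infty}(X)}$ on $\sup_X(\psi-\varphi)$, the asserted inequality holds trivially after enlarging $C$; otherwise the iteration runs with increments strictly less than $1$, as the hypothesis requires.)

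It remains to pass from the vanishing of $f$ to a bound on the genuine supremum. Choosing the competitor $\varphi\equiv 0$ in the definition of $\capacity$ shows $\capacity(U)\ge\int_U\omega^n$ for open $U$, hence $\Capacity(E)\ge\int_E\omega^n$ for every Borel $E$; so a set of zero outer capacity is $\omega^n$-negligible, and $\Capacity(\varphi-\psi<-s_\infty)=0$ forces $\psi-\varphi\le s_\infty$ almost everywhere. Finally, since the local potentials $\rho+\varphi$ and $\rho+\psi$ are $\omega$-subharmonic (Definition~\ref{def: m sub non smooth}(i)), each of $\varphi,\psi$ satisfies the sub-mean-value property, so the pointwise value of $\psi-\varphi$ coincides with the limit of its averages over shrinking balls; as these averages are $\le s_\infty$, we obtain $\sup_X(\psi-\varphi)\le s_\infty=\epsilon+C f(\epsilon)^{\alpha}$, which is the claim. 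Beyond routine estimates, the only delicate point is this last upgrade from an almost-everywhere to a pointwise bound, which is standard for (quasi-)subharmonic functions.
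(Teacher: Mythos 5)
Your proof is correct and takes essentially the same route as the paper, whose entire proof of this theorem is the citation ``the same as in \cite{EGZ09}, Proposition 2.6'': the recursion $t\,f(s+t)\le B\,f(s)^{1+\alpha}$ with $B=(1+M)A^{1/m}$, obtained by combining Lemma~\ref{lemma: capacity} with condition $\mathcal{Q}_m(\alpha,A,\omega)$ applied to $K=\{\varphi-\psi<-s\}$, followed by the De Giorgi-type iteration lemma of \cite{EGZ09} and the passage from vanishing capacity to a genuine supremum bound, is precisely that argument. The one point to tighten is your dichotomy: the condition $\epsilon+C'f(\epsilon)^{\alpha}\le \Vert\varphi\Vert_{L^{\infty}(X)}+\Vert\psi\Vert_{L^{\infty}(X)}$ only forces the iteration increments $2Bf(s_j)^{\alpha}\le 2Bf(\epsilon)^{\alpha}$ to be $\le 1$ once $C'$ is taken at least $2B\bigl(\Vert\varphi\Vert_{L^{\infty}(X)}+\Vert\psi\Vert_{L^{\infty}(X)}\bigr)$, which is harmless because the theorem allows $C$ to depend on these sup-norms (equivalently, split the cases according to whether $f(\epsilon)^{\alpha}\le 1/(2B)$, the trivial case being handled exactly as you handle yours).
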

\begin{proof}
The same as in \cite{EGZ09}, Proposition 2.6.
\end{proof}
\noindent The following Proposition is due to Kolodziej and Dinew  \cite[Propsition 2.1]{DK11}. We include here a slightly different proof.
\begin{prop}\cite{DK11}\label{prop: DK11 1}
Let $1<p<\frac{n}{n-m}.$ There exists a constant $C=C(p,\omega)$ such that
for every Borel subset $K$ of $X$, we have
$$
V(K)\leq C\Capacity(K)^p,
$$
where $V(K):=\int_K\omega^n.$
\end{prop}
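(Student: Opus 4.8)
The plan is to deduce the global estimate from a local Euclidean one by a covering argument, and to prove the local estimate through the relative extremal function together with a Sobolev-type inequality for $m$-subharmonic functions.

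First I would reduce to a local statement. Fix a finite cover of $X$ by coordinate balls $B_i=B(x_i,2r_i)$, each relatively compact in a chart, and concentric balls $B_i'=B(x_i,r_i)$ with $X=\bigcup_iB_i'$; on each $\overline{B_i}$ the form $\omega$ is uniformly comparable to the Euclidean form $\beta$. Since $V$ is a measure, $\Capacity$ is monotone and $p>1$, writing $E_i=K\cap B_i'$ gives
\[
V(K)\le\sum_iV(E_i)\le\sum_iC\,\Capacity(E_i)^p\le C'\,\Capacity(K)^p,
\]
so it suffices to prove $V(E)\le C\,\Capacity(E)^p$ for a Borel set $E\Subset B':=B_i'$. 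By regularity of $V$ and $\Capacity$ I may assume $E$ compact. Note also that $\Capacity(A)=0\Rightarrow V(A)=0$, since the constant $\varphi\equiv0$ is an admissible test function and $H_m(0)=\omega^n$.

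Second, for compact $E\Subset B'$ I would introduce the relative extremal function
\[
h_E=\Big(\sup\{w:\ w\ \text{is}\ \beta\text{-}m\text{-subharmonic on }B,\ w\le0,\ w\le-1\ \text{on}\ E\}\Big)^{*}.
\]
Then $-1\le h_E\le0$, $h_E=-1$ quasi-everywhere (hence almost everywhere) on $E$, $h_E=0$ near $\partial B$, and the Bedford--Taylor/Blocki theory gives $\int_B(dd^ch_E)^m\wedge\beta^{n-m}=\mathrm{cap}_m(E,B)$. Choosing $q$ with $1<q/m=p<\frac{n}{n-m}$, i.e. $q<\frac{nm}{n-m}$, and using $|h_E|=1$ a.e. on $E$, I would estimate
\[
V(E)=\int_E|h_E|^{q}\,\beta^n\le\int_B|h_E|^{q}\,\beta^n\le C\Big(\int_B(dd^ch_E)^m\wedge\beta^{n-m}\Big)^{q/m}=C\,\mathrm{cap}_m(E,B)^{p}.
\]
The middle inequality is the heart of the matter: a Sobolev-type inequality stating that for a bounded $\beta$-$m$-subharmonic function vanishing near $\partial B$ the $L^q$ norm is controlled by the total Hessian mass raised to the power $1/m$, for every subcritical $q<\frac{nm}{n-m}$. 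The critical exponent is forced by scaling: under $u\mapsto u(\cdot/\lambda)$ the left-hand side scales like $\lambda^{2n}$ while $\big(\int H_m(u)\big)^{q/m}$ scales like $\lambda^{(2n-2m)q/m}$, and these balance exactly at $q=\frac{nm}{n-m}$, which explains why the proposition holds precisely for $p<\frac{n}{n-m}$.

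It remains to pass from $\mathrm{cap}_m(E,B)$ to the global $\Capacity(E)$, namely to show $\mathrm{cap}_m(E,B)\le C\,\Capacity(E)$. This is a standard local-to-global comparison: using the uniform equivalence $c_0\beta\le\omega\le c_1\beta$ on $\overline B$ and the convexity of the cone $\Gamma_m$, a local competitor can be transferred (after extending it by $0$ outside $B$, where it already vanishes) to a globally $(\omega,m)$-subharmonic function whose Hessian mass on $E$ dominates, up to a fixed constant, the Euclidean one; taking suprema gives the claim. Combining the three steps yields $V(E)\le C\,\Capacity(E)^p$ and hence the proposition. The genuine obstacle is the Sobolev inequality with its sharp exponent and correct homogeneity in $m$; the local--global comparison, though technical, is routine once the comparability of $\omega$ and $\beta$ on relatively compact charts and the convexity of $\Gamma_m$ are invoked.
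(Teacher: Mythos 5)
Your route is genuinely different from the paper's, but it has two gaps, and each one is serious. The first is the ``Sobolev-type inequality'' $\int_B|u|^q\beta^n\le C\big(\int_B(dd^cu)^m\wedge\beta^{n-m}\big)^{q/m}$ for $q<\frac{nm}{n-m}$, which you correctly identify as the heart of the matter but do not prove. This is not an off-the-shelf result you can cite: at the level of this paper it is essentially \emph{equivalent} to the volume--capacity estimate being proved (your argument derives volume--capacity from it; conversely, integrating the distribution function of $u$ against the capacity of its sublevel sets derives it from volume--capacity). Indeed, the sharp $L^q$ bound for $m$-subharmonic functions is Blocki's integrability conjecture, resolved precisely in \cite{DK11}, and the only known proof there goes \emph{through} a volume--capacity estimate obtained by solving an auxiliary Monge--Amp\`ere equation and invoking Dinew's inequality between mixed Monge--Amp\`ere measures \cite{Di09}. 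So as written, your plan is circular: it reduces the proposition to an unproved statement whose known proof is the proposition (in its local form).

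The second gap is the step you call routine: $\mathrm{cap}_m(E,B)\le C\,\Capacity(E)$. For $1\le m<n$, $m$-positivity depends on the background form in an essential way, and the comparability $c_0\beta\le\omega\le c_1\beta$ does \emph{not} relate the cone of $(\beta,m)$-positive forms to the cone of $(\omega,m)$-positive forms; convexity of $\Gamma_m$ does not help. Already for $m=1$: if $dd^cw$ has eigenvalues $(T,-T+\epsilon)$ with respect to $\beta$ (so $\mathrm{tr}_\beta(dd^cw)\ge 0$), then $\mathrm{tr}_\omega(dd^cw)$ can be of order $-(c_1^{-1}-c_0^{-1})^{-1}T\to-\infty$, and since capacity competitors have unbounded Hessians this error cannot be absorbed into $\omega$. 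Only for $m=n$ is the class metric-independent, which is why this transfer is standard in the plurisubharmonic setting (\cite{GZ05}, \cite{EGZ09}) but not here --- the paper itself notes that Hessian theory for a non-Euclidean background form is ``fully open.'' (A minor additional flaw: neither a general local competitor nor the extremal function $h_E$ vanishes identically near $\partial B$, so extension by zero needs a gluing that runs into the same cone-transfer obstruction.) The paper's own proof avoids both difficulties by staying global: solve $\omega_u^n=V(U)^{-1}\chi_U\,\omega^n$, bound $\sup_Xu\le C\Vert f\Vert_r^{1/n}$ by \cite{BGZ08}, convert $u$ into a Hessian test function via Dinew's inequality $\omega_u^m\wedge\omega^{n-m}\ge f^{m/n}\omega^n$, and plug the normalized $u$ into the definition of $\Capacity$; letting $r\downarrow 1$ gives every exponent $p<\frac{n}{n-m}$. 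If you want to salvage your approach, you would have to prove the local Sobolev inequality first --- and the natural way to do that is exactly this Monge--Amp\`ere argument.
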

\begin{proof}
Fix an open subset $U$ such that $K\subset U.$ Solve the complex
Monge-Amp\`ere equation to find $u\geq 0$ such that $\omega_u^n=f\omega^n$ on
$X$, with $f=V(U)^{-1}\chi_U.$   From \cite{BGZ08}, Corollary 3.2, the solution
$u$ is continuous and moreover, for each $r>1$, 
$$
\sup_X u \leq C\Vert f\Vert_r^{1/n},
$$
where the constant $C=C(r,\omega)$ does not depend on $K.$ The inequality
between mixed complex Monge-Amp\`ere measures \cite{Di09} tells us that
$$
\omega_u^m\wedge \omega^{n-m}\geq f^{m/n}\omega^n.
$$
Thus since $u\in \Pm(X,\omega)\cap L^{\infty}(X)$, we obtain
$$
\Capacity(U)\geq  (\sup_Xu)^{-m}\int_UH_m(u)\geq  (\sup_Xu)^{-m}\int_Uf^{m/n}\omega^n
\geq  C^{-m} V(U)^{1-\frac{m}{rn}}.
$$
Thus, for every $r>1$, there exists a constant $C$ not depending on $K$ such
that $V(K)\leq C.\Capacity(K)^{\frac{nr}{nr-m}}$. The proof is complete.
\end{proof}
\noindent As a consequence of Proposition \ref{prop: DK11 1} we have some examples of
measures satisfying condition $\mathcal{Q}_m(\alpha,A,\omega).$
\begin{lemma}\label{lem: DK11 2}
Assume $\mu=f\omega^n$ is a Borel measure with $0\leq f\in L^p(X)$ for some
$p>n/m$. Then for any $0<\alpha<\frac{mp-n}{(n-m)p},$ there exists
$A_{\alpha}>0$ such that $\mu$ satisfies
$\mathcal{Q}_m(\alpha,A_{\alpha},\omega).$
\end{lemma}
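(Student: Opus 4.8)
The plan is to derive the bound $\mu(K)\le A_\alpha\,\Capacity(K)^{1+\alpha}$ by combining H\"older's inequality with the volume--capacity domination of Proposition \ref{prop: DK11 1}. First I would write $\mu(K)=\int_K f\,\omega^n$ and, since $f\in L^p(X)$, apply H\"older's inequality with conjugate exponent $q=p/(p-1)$ to get
\[
\mu(K)=\int_X f\,\ind_K\,\omega^n\le \Vert f\Vert_p\, V(K)^{(p-1)/p},
\]
where $V(K)=\int_K\omega^n$ as in Proposition \ref{prop: DK11 1}. This reduces the whole matter to controlling $V(K)$ by $\Capacity(K)$.

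Next I would invoke Proposition \ref{prop: DK11 1}: for any exponent $1<p'<\frac{n}{n-m}$ there is a constant $C=C(p',\omega)$ with $V(K)\le C\,\Capacity(K)^{p'}$. Substituting yields $\mu(K)\le \Vert f\Vert_p\,C^{(p-1)/p}\,\Capacity(K)^{\,p'(p-1)/p}$, so everything comes down to arranging the exponent $p'(p-1)/p$ to equal (or exceed) $1+\alpha$. Here the hypothesis $p>n/m$ and the constraint on $\alpha$ enter through a short computation:
\[
1+\alpha<1+\frac{mp-n}{(n-m)p}=\frac{n(p-1)}{(n-m)p}=\frac{n}{n-m}\cdot\frac{p-1}{p}.
\]
Hence $(1+\alpha)\frac{p}{p-1}$ is strictly less than $\frac{n}{n-m}$, and being a product of two factors exceeding $1$ it is itself strictly greater than $1$. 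Choosing $p':=(1+\alpha)\frac{p}{p-1}$ therefore lands inside the admissible range $1<p'<\frac{n}{n-m}$ of Proposition \ref{prop: DK11 1} and gives exactly $p'(p-1)/p=1+\alpha$. With this choice I obtain $\mu(K)\le A_\alpha\,\Capacity(K)^{1+\alpha}$ with $A_\alpha=\Vert f\Vert_p\,C(p',\omega)^{(p-1)/p}$, which is the asserted condition $\mathcal{Q}_m(\alpha,A_\alpha,\omega)$.

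The argument is essentially a bookkeeping of exponents once Proposition \ref{prop: DK11 1} is available, so there is no genuine obstacle here; all the analytic content sits in that proposition. One harmless point to keep in mind is that $\Capacity$ is uniformly bounded on $X$ (its total mass equals $\int_X\omega^n$), so that if one only reaches an exponent $p'(p-1)/p$ strictly larger than $1+\alpha$ --- for instance by taking $p'$ slightly below the critical value $\frac{n}{n-m}$ rather than exactly $(1+\alpha)\frac{p}{p-1}$ --- one may absorb the excess power of $\Capacity(K)$ into the constant and still conclude. This flexibility shows that the precise choice of $p'$ is inessential beyond remaining in the admissible range.
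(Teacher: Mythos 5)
Your proof is correct and follows exactly the route the paper intends: the paper states the lemma as an immediate consequence of Proposition \ref{prop: DK11 1}, and your H\"older-plus-exponent-bookkeeping argument (choosing $p'=(1+\alpha)\frac{p}{p-1}$, which your computation correctly places in the admissible range $1<p'<\frac{n}{n-m}$) is precisely the standard way to deduce it. Nothing is missing.
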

\noindent The following stability theorem was established in \cite{EGZ09} for the Monge-Amp\`ere equation. 
\begin{theorem}\label{thm: Lr-Linfty}
Assume $H_m(\varphi)=f\omega^n, \ H_m(\psi)=g\omega^n,$ where $\varphi,\psi\in
\Pm(X,\omega)\cap \Cc^{0}(X)$ and $f,g\in L^p(X)$ with $p>n/m.$ Fix $r>0$. Then if $\gamma$ small enough such that $\frac{\gamma mq}{r-\gamma(r+mq)}<\frac{mp-n}{(n-m)p}$,
we have
$$
\Vert \varphi-\psi\Vert_{L^{\infty}(X)}\leq C\Vert	\varphi-\psi\Vert_{L^r(X)}^{\gamma},
$$
where $q=\frac{p}{p-1}$ denotes the conjugate exponent of $p,$ and the constant $C$
depends only on $n,m,p,r$ and upper bounds  of $\Vert f\Vert_p, \ \Vert g\Vert_p$.
\end{theorem}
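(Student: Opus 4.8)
The plan is to chain the capacity stability estimate of Theorem \ref{thm: stability EGZ09} with a bound for the capacity of the sublevel sets $\{\varphi-\psi<-\epsilon\}$ in terms of $\Vert\varphi-\psi\Vert_{L^r}$, and then to optimize in $\epsilon$. First I would normalize by $\sup_X\varphi=\sup_X\psi=-1$; this leaves $H_m(\varphi)$ and $H_m(\psi)$ unchanged and is the normalization implicit in the statement, since adding a constant to a single potential would destroy the inequality. By Lemma \ref{lem: DK11 2} the measures $f\omega^n$ and $g\omega^n$ satisfy condition $\mathcal{Q}_m(\alpha,A_\alpha,\omega)$ for every $0<\alpha<\frac{mp-n}{(n-m)p}$, with $A_\alpha$ depending only on $\Vert f\Vert_p$, $\Vert g\Vert_p$ and $n,m,p,\alpha$; and by Proposition \ref{prop: uniform estimate EGZ09} the normalized potentials satisfy $\Vert\varphi\Vert_{L^\infty},\Vert\psi\Vert_{L^\infty}\le C$, with $C$ controlled by the same data. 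This is what will make the final constant depend only on $n,m,p,r$ and upper bounds for $\Vert f\Vert_p,\Vert g\Vert_p$. Writing $u=\varphi-\psi$, the normalization also gives $\sup_X u\ge 0$ and $\sup_X(-u)\ge 0$, hence $\Vert u\Vert_{L^\infty}\le \sup_X u+\sup_X(-u)$, so it suffices to estimate $\sup_X(\psi-\varphi)$ and, symmetrically, $\sup_X(\varphi-\psi)$.

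Second, I would produce the capacity--$L^r$ bound. Applying Lemma \ref{lemma: capacity} to the non positive pair $(\varphi,\psi)$ with $s=t=\epsilon/2$ gives
\[(\epsilon/2)^m\,\Capacity(u<-\epsilon)\le (1+M)^m\int_{\{u<-\epsilon/2\}}H_m(\varphi)=(1+M)^m\int_{\{u<-\epsilon/2\}}f\,\omega^n,\]
where $M=\Vert\psi\Vert_{L^\infty}$, and H\"older's inequality with the conjugate exponent $q=p/(p-1)$ yields
\[\int_{\{u<-\epsilon/2\}}f\,\omega^n\le \Vert f\Vert_p\,V(\{u<-\epsilon/2\})^{1/q}.\]
Finally Chebyshev's inequality bounds the volume, $V(\{u<-\epsilon/2\})\le (\epsilon/2)^{-r}\Vert u\Vert_{L^r}^{r}$, because $\{u<-\epsilon/2\}\subset\{|u|>\epsilon/2\}$. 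Combining the three displays gives
\[\Capacity(u<-\epsilon)\le C\,\epsilon^{-m-r/q}\,\Vert u\Vert_{L^r}^{r/q},\]
and the same bound holds for $\Capacity(u>\epsilon)=\Capacity(\psi-\varphi<-\epsilon)$ upon applying Lemma \ref{lemma: capacity} to $(\psi,\varphi)$ and using $g\in L^p$.

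Third, I would feed these into Theorem \ref{thm: stability EGZ09}. For the admissible $\alpha$ fixed above, the two applications of that theorem (to $(\varphi,\psi)$ and to $(\psi,\varphi)$) give
\[\Vert u\Vert_{L^\infty}\le 2\epsilon+C\big[\Capacity(u<-\epsilon)\big]^{\alpha/m}+C\big[\Capacity(u>\epsilon)\big]^{\alpha/m}\le 2\epsilon+C'\,\epsilon^{-(m+r/q)\alpha/m}\,\Vert u\Vert_{L^r}^{r\alpha/(qm)}.\]
It remains to optimize: choosing $\epsilon=\Vert u\Vert_{L^r}^{\gamma}$ and balancing the two exponents, i.e. $\gamma=\frac{r\alpha}{qm(1+\alpha)+r\alpha}$, makes both terms comparable to $\Vert u\Vert_{L^r}^{\gamma}$ and produces the asserted estimate. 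Inverting this relation gives $\alpha=\frac{\gamma mq}{r-\gamma(r+mq)}$, so the admissibility $\alpha<\frac{mp-n}{(n-m)p}$, valid whenever $\gamma$ is small enough that $r-\gamma(r+mq)>0$, is exactly the stated hypothesis on $\gamma$.

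I expect the only genuinely delicate point to be the bookkeeping of the exponents: one must fix $\alpha$ strictly below $\frac{mp-n}{(n-m)p}$, track how $q=p/(p-1)$ enters through H\"older's inequality, and check that the balancing value of $\gamma$ matches the stated admissible range. The normalization step also deserves care, since the inequality is false without it. Beyond these computations the argument is routine, as all the analytic ingredients --- the capacity estimate of Lemma \ref{lemma: capacity}, the volume--capacity comparison behind Lemma \ref{lem: DK11 2}, and the capacity stability of Theorem \ref{thm: stability EGZ09} --- are already available.
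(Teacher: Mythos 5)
Your proof is correct and takes essentially the same route as the paper: Theorem \ref{thm: stability EGZ09} together with Proposition \ref{prop: uniform estimate EGZ09} and Lemma \ref{lem: DK11 2}, the capacity bound obtained from Lemma \ref{lemma: capacity} plus H\"older/Chebyshev (you merely swap the order of the two inequalities), the choice $\epsilon=\Vert\varphi-\psi\Vert_{L^r(X)}^{\gamma}$, and the same exponent balancing $\gamma=\frac{\alpha}{m}\left[r/q-\gamma(m+r/q)\right]$, which inverts to the stated admissibility condition on $\gamma$. Your explicit handling of the normalization $\sup_X\varphi=\sup_X\psi=-1$, which the paper leaves implicit but which is genuinely needed for the statement to hold with a constant independent of the potentials, is a welcome clarification rather than a departure.
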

\begin{proof}
Fix $\epsilon>0,$ and $\alpha>0$ to be chosen later. It follows from Theorem
\ref{thm: stability EGZ09} and Proposition \ref{prop: uniform estimate EGZ09} that 
$$
\Vert \varphi-\psi\Vert_{L^{\infty}(X)}\leq \epsilon + C_1[\Capacity(\vert \varphi-\psi\vert>\epsilon)]^{\alpha/m}.
$$
Applying Lemma \ref{lemma: capacity} we see that
$$
\Capacity(\vert \varphi-\psi\vert>\epsilon)\leq
\frac{C_2}{\epsilon^{m+r/q}}\int_X \vert \varphi-\psi\vert^{r/q}(f+g)\omega^n.
$$
It follows thus from H\"{o}lder's inequality that 
$$
\Capacity(\vert \varphi-\psi\vert>\epsilon)\leq \frac{C_3\Vert
f+g\Vert_p}{\epsilon^{m+r/q}}\Vert \varphi-\psi\Vert_{L^r}^{r/q}.
$$
Choose $\epsilon:= \Vert \varphi-\psi\Vert_{L^r}^{\gamma}.$ Then 
$$
\Capacity(\vert \varphi-\psi\vert>\epsilon)\leq C_4[\Vert
\varphi-\psi\Vert_{L^r}]^{r/q-\gamma(m+r/q)}.
$$
We infer that 
$$
\Vert \varphi-\psi\Vert_{L^{\infty}(X)}\leq \Vert
\varphi-\psi\Vert_{L^r(X)}^{\gamma}+C_5\Vert
\varphi-\psi\Vert_{L^r(X)}^{\gamma'},
$$
where $\gamma'=\frac{\alpha}{m}[r/q-\gamma(m+r/q)].$ We finally choose
$\alpha$ so that $\gamma = \gamma'$: this yields the desired estimate.
\end{proof}
  
\section{Proof of the main results}
\subsection{Proof of Theorem A}
We first prove the uniqueness. Suppose that $\varphi$ and $\psi$ are two continuous solutions of (\ref{eq: He}). Set $\rho:= \varphi-\psi.$ It follows from Corollary \ref{cor: Blocki stability} that
			\[\int_X d\rho\wedge d^c\rho\wedge\omega^{n-1}\leq C.
			\Big(\int_X \rho(H_m(\psi)-H_m(\varphi))\Big)^{2^{1-m}},\]
where $C$ is a positive constant.  Since $F$ is non decreasing in the second variable,  it follows from Stokes formula that
			\[0\leq \int_X \rho(H_m(\psi)-H_m(\varphi))=\int_X
			(\varphi-\psi)(F(.,\psi)-F(.,\varphi))\omega^n\leq 0.\]
Thus, 
			\[\int_X d\rho\wedge d^c\rho\wedge\omega^{n-1}=0,\]
which implies that $\rho$ is constant. If moreover $t\mapsto F(x,t)$ is increasing for every $x\in X$, it is easy to see that $\rho=0.$

To prove  the existence, we consider three cases. 
\medskip

\noindent {\bf Case 1}: $F$ does not depend on the second variable, $F(x,t)=f(x), \forall x,t$.\\
Take a sequence of smooth strictly positive functions $(f_j)$ converging to $f$ in $L^p(X).$ We can assume that $\int_Xf_j\omega^n=\int_X\omega^n$, for every $j$. We use the existence result in Theorem \ref{thm: DK12} to produce a sequence of smooth solutions $(\varphi_j)$ normalized by $\sup_X \varphi_j=0, \forall j.$ By passing to a subsequence we can assume that $(\varphi_j)$ converges in $L^1(X).$ Since $\Vert f_j\Vert_p$ is uniformly bounded, by Lemma \ref{lem: DK11 2} we can find $\alpha,A$ which do not depend on $j$ such that all the measures $f_j\omega^n$ satisfy condition $\mathcal{Q}_m(\alpha,A,\omega).$ By Proposition \ref{prop: uniform estimate EGZ09}, the sequence $(\varphi_j)$ is uniformly bounded. Now it follows from Theorem \ref{thm: Lr-Linfty} that  $\varphi_j$ converges uniformly to a continuous function $\varphi\in \Pm(X,\omega)$ which solves equation $H_m(\varphi)=f\omega^n$. 
 
In the next two cases we will use the Schauder fixed point Theorem.
\medskip

\noindent {\bf Case 2}: There exists $t_1\in\R$ such that $\int_X F(x,t_1)\omega^n>\int_XF(x,t_0)\omega^n$.\\
We set 
$$
\Cc:=\{\varphi\in SH_m(X,\omega)\ \mid\ 	\int_X \varphi\omega^n\geq -C_0;\ \sup_X \varphi\leq 0\},
$$
where $C_0$ is the constant introduced in Lemma \ref{lem: compactness}. It
follows that $\Cc$ is a compact convex subset of $L^1(X).$
\smallskip

Take $\psi\in \Cc,$ we use the result in case 1 to find
$\varphi\in \Pm(X,\omega)\cap \Cc^0(X)$ such that $\sup_X \varphi=0$ and
$$
H_m(\varphi)=F(.,\psi+c_{\psi})\omega^n,
$$
where $c_{\psi}\geq t_0$ is a constant such that 
\begin{equation}\label{eq: constant c xi}
\int_X F(.,\psi+c_{\psi})\omega^n=\int_X\omega^n.
\end{equation}
This can be done because $F$ satisfies conditions (F2) and (F3). Indeed, by Fatou's Lemma we have
$$
\liminf _{t\to+\infty}\int_XF(.,\psi+t)\omega^n \geq \int_XF(.,t_1)\omega^n>\int_X \omega^n.
$$
Moreover $\int_X F(.,\psi+t_0)\omega^n\leq \int_X F(.,t_0)=\int_X\omega^n$. Thus by continuity of $t\mapsto \int_X F(.,\psi+t)\omega^n$ we can find $c_{\psi}$ satisfying (\ref{eq: constant c xi}).  Observe that $\varphi$ is well-defined and does not depend on $c_{\psi}.$ Indeed, assume that $c_1, c_2$ are two constants such that 
$$
\int_X F(.,\psi+c_1)\omega^n=\int_X F(.,\psi+c_2)\omega^n=\int_X\omega^n,
$$
and $\varphi_1,\varphi_2$ are two continuous functions in $\Pm(X,\omega)$ such that
$$
H_m(\varphi_1)=F(.,\psi+c_1),\ \  H_m(\varphi_2)=F(.,\psi+c_2).
$$
Since $t\mapsto F(x,t)$ is  non decreasing for every $x\in X,$ we have $F(.,\psi+c_1)=F(.,\psi+c_2)$ almost every where on $X$. Thus by the uniqueness result above, $\varphi_1=\varphi_2+ c$ for some constant $c$ which must be $0$ by the normalization. Then we define the map $\Phi: \Cc\rightarrow \Cc, \ \psi \mapsto \varphi.$ 

Now we prove that $\Phi$ is continuous on $\Cc.$ Suppose that $(\psi_j)$ is a sequence in $\Cc$ converging to $\psi\in \Cc$ in $L^1(X)$ and let $\varphi_j=\Phi(\psi_j)$. We set $c_j:=c_{\psi_j}$ and prove that $(c_j)$ is uniformly bounded. Suppose in the contrary that $c_j\uparrow +\infty.$  By subtracting a subsequence if necessary we can assume that $\psi_j\to \psi$ almost everywhere in $X$. Then by Fatou's lemma we have
$$
\int_X\omega^n=\lim_{j\to+\infty}\int_X F(.,\psi_j+c_j)\omega^n\geq\int_X F(.,t_1)\omega^n,
$$
which is impossible. Therefore the sequence $(c_j)$ is  bounded. This implies that the sequence $(F(.,\psi_j+c_j))_j$ is bounded in $L^p(X),$ for some $p>n/m$ which does not depend on $j.$ To prove the continuity of $\Phi$ it suffices to show that any cluster point of $(\varphi_j)$ satisfies $\Phi(\psi)=\varphi.$ Suppose that  $\varphi_j\to \varphi$ in $L^1(X)$. It follows from Theorem \ref{thm: Lr-Linfty} that the sequence $(\varphi_j)$ is Cauchy in $\Cc^0(X)$. Thus $(\varphi_j)$ converges to $\varphi$ in $\Cc^0(X)$ and $\varphi\in \Pm(X,\omega)\cap \Cc^0(X)$. By subtracting a subsequence if necessary we can assume that $\psi_j\to \psi$ almost everywhere on $X$ and $c_j\to c.$ Since $t\mapsto F(x,t)$ is continuous we see that $F(.,\psi_j+c_j) \to F(.,\psi+c)$ almost everywhere. Thus $H_m(\varphi)=F(.,\psi+c)$ which means $\Phi(\psi)=\varphi$ and hence $\Phi$ is continuous on $\Cc.$

By the Schauder fixed point Theorem, it follows that $\Phi$ has a fixed point in $\Cc$, say $\varphi$. By definition of $\Phi$, the function $\varphi$ must be in the class $\Pm(X,\omega)\cap \Cc^0(X)$ and we have
$$
H_m(\varphi)=F(.,\varphi+c_{\varphi})\omega^n.
$$
The function $\varphi+c_{\varphi}$ is the required solution.
\medskip
   
\noindent {\bf Case 3}: $\int_X F(.,t)\omega^n=\int_XF(.,t_0)\omega^n, \forall t\geq t_0.$  In this case we have $F(x,t)=F(x,t_0)$ for all $t\geq t_0$ and for almost $x\in X.$ Thus, for every $t\geq t_0,$ 
$$
\Vert F(.,t_0)\Vert_{L^p(X)}=\Vert F(.,t)\Vert_{L^p(X)}.
$$
From Proposition \ref{prop: uniform estimate EGZ09} we can find a positive constant $C_1$ such that for any $\varphi\in \Pm(X,\omega)\cap \Cc^0(X)$ satisfying $\sup_X \varphi=0$ and
$$
H_m(\varphi)=f\omega^n,
$$
with $\Vert f\Vert_p\leq \Vert F(.,t_0)\Vert_p$ then 
$$
\varphi\geq -C_1.
$$
We set 
$$
\Cc':=\{\varphi\in SH_m(X,\omega)\ \mid \  -C_1\leq \varphi \leq 0\}.
$$
Then $\Cc'$ is a compact convex subset of $L^1(X).$
\smallskip

Take $\psi\in \Cc',$ we use the result in case 1 to find
$\varphi\in \Pm(X,\omega)\cap \Cc^0(X)$ such that $\sup_X \varphi=0$ and
$$
H_m(\varphi)=F(.,\psi+c_{\psi})\omega^n,
$$
where $t_0\leq c_{\psi}\leq t_0+C_1$ is a constant such that 
$$
\int_X F(.,\psi+c_{\psi})\omega^n=\int_X\omega^n.
$$
This can be done because $F$ satisfies the condition (F2) and (F3) . Indeed, 
$$
\int_X F(.,\psi+t_0)\omega^n\leq \int_X \omega^n\leq \int_X F(.,\psi+t_0+C_1)\omega^n. 
$$
Thus by continuity we can find $c_{\psi}$ as above. As in case 2, $\varphi$ is well-defined and does not depend on the choice of $c_{\psi}$. By the choice of $C_1$, we see that $\varphi\in \Cc'.$ So, we can define a map $\Phi: \Cc'\rightarrow \Cc'$ by setting $\Phi(\psi)=\varphi.$

\smallskip

Now we prove that $\Phi$ is continuous on $\Cc'.$ Suppose that $(\psi_j)$ is a sequence in $\Cc'$ converging to $\psi\in \Cc'$ in $L^1(X)$ and let $\varphi_j=\Phi(\psi_j)$. We set $c_j:=c_{\psi_j}$. For each $j\in\N$, 
$$
\int_X [F(.,\psi_j+c_j)]^p\omega^n \leq \int_X [F(.,c_j)]^p\omega^n=\int_X [F(.,t_0)]^p\omega^n.
$$ 
Therefore, the sequence $(F(.,\psi_j+c_j))_j$ is bounded in $L^p(X).$\\
As in case 2, we can assume that $\varphi_j\to \varphi$ in $L^1(X)$. It follows from Theorem \ref{thm: Lr-Linfty} that the sequence $(\varphi_j)$ is Cauchy in $\Cc^0(X)$. Thus $(\varphi_j)$ converges to $\varphi$ in $\Cc^0(X)$ and $\varphi\in \Pm(X,\omega)\cap \Cc^0(X)$. By subtracting a subsequence if necessary we can assume that $\psi_j\to \psi$ in $L^1(X)$ and $c_j\to c.$ Then $H_m(\varphi)=F(.,\psi+c)$ and $\Phi(\psi)=\varphi$ which implies that $\Psi$ is continuous on $\Cc'.$

By the Schauder fixed point Theorem, it follows that $\Phi$ has a fixed point in $\Cc'$, say $\varphi$. By definition of $\Phi$, the function $\varphi$ must be in the class $\Pm(X,\omega)\cap \Cc^0(X)$ and we have
$$
H_m(\varphi)=F(.,\varphi+c_{\varphi})\omega^n.
$$		
The function $\varphi+c_{\varphi}$ is the required solution.
\subsection{Proof of Theorem B}
In this section we consider a special class of compact K\"{a}hler manifolds. We assume that $(X,\omega)$ is a rational homogeneous manifold. That means $X=G/H$, where $G$ is a complex semi-simple algebraic group and $H$ is a parabolic subgroup. Let $K$ be a maximal compact subgroup of $G.$ Then $K$ acts transitively on $X.$ We assume moreover that $\omega$ is fixed by action of $K.$ In this case we can regularize singular $(\omega,m)$-subharmonic functions by using the group action which preserves the metric. 

Let $\f$ be a continuous $(\omega,m)$-subharmonic function on $X.$ 
We consider the following regularizing sequence 
$$
\f_{\epsilon}(x):=\int_K \f(g^{-1}.x)\chi_{\epsilon}(g)dg,
$$
where $dg$ is the Haar measure on $K$ and $\chi_{\epsilon}$ are cut-off functions whose supports decreases to $\{e\}$ (the identity of $K$), and $\int_K \chi_{\epsilon}(g)dg=1, \forall \epsilon>0.$  \\
It follows from \cite{G99}, \cite{Hu94} that $\f_{\epsilon}$ is smooth for every $\epsilon>0.$ 
\begin{theorem}\label{thm: reg}
Let $\f$ be a continuous $(\omega,m)$-subharmonic function on $X.$ Then for each $\epsilon>0$, $\f_{\epsilon}$ is   smooth $(\omega,m)$-subharmonic  and 
$$
\lim_{\epsilon\to 0}\f_{\epsilon}=\f
$$
uniformly on $X.$ 
\end{theorem}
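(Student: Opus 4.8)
The plan is to prove the two assertions separately: that each $\f_\epsilon$ is $(\omega,m)$-subharmonic, and that $\f_\epsilon\to\f$ uniformly. Smoothness of $\f_\epsilon$ is already granted by the cited regularity results, so by Proposition \ref{prop: properties of m sub functions}(i) it suffices, for the first assertion, to verify that the smooth form $\omega+dd^c\f_\epsilon$ is $(\omega,m)$-positive. The whole argument hinges on the hypothesis that $\omega$ is invariant under the $K$-action: this is exactly what makes translation by a group element preserve $(\omega,m)$-subharmonicity, so that the group average $\f_\epsilon$ inherits it.

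For $g\in K$ write $L_g\colon X\to X,\ L_g(x)=g\cdot x$, a biholomorphism with $L_g^*\omega=\omega$. The first step is the lemma that $\psi_g:=\f\circ L_{g^{-1}}$ is $(\omega,m)$-subharmonic for every $g$, so that $\f_\epsilon(x)=\int_K\psi_g(x)\chi_\epsilon(g)\,dg$ is an average of such functions. Since $dd^c$ commutes with pullback by holomorphic maps and $L_{g^{-1}}^*\omega=\omega$, we have $\omega+dd^c\psi_g=L_{g^{-1}}^*(\omega+dd^c\f)$. Given smooth $(\omega,m)$-positive test forms $\beta_1,\dots,\beta_{m-1}$, I set $\tilde\beta_i:=L_g^*\beta_i$; because $L_g$ fixes $\omega$ and is an orientation-preserving biholomorphism, each $\tilde\beta_i$ is again $(\omega,m)$-positive, and $L_{g^{-1}}^*\tilde\beta_i=\beta_i$. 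Then
\[ (\omega+dd^c\psi_g)\wedge\beta_1\wedge\cdots\wedge\omega^{n-m}=L_{g^{-1}}^*\big[(\omega+dd^c\f)\wedge\tilde\beta_1\wedge\cdots\wedge\omega^{n-m}\big]\geq 0, \]
the bracket being a nonnegative measure by $(\omega,m)$-subharmonicity of $\f$, and its pullback under a biholomorphism again nonnegative. This proves the lemma.

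Next I average. For a smooth test form $\eta$, Fubini gives $\langle dd^c\f_\epsilon,\eta\rangle=\int_K\chi_\epsilon(g)\langle dd^c\psi_g,\eta\rangle\,dg$, so, using $\int_K\chi_\epsilon\,dg=1$ and that $\omega$ does not depend on $g$,
\[ \omega+dd^c\f_\epsilon=\int_K\chi_\epsilon(g)\,(\omega+dd^c\psi_g)\,dg \]
as currents. Wedging with $\beta_1\wedge\cdots\wedge\omega^{n-m}$ and using $\chi_\epsilon\geq 0$ together with the lemma shows $\omega+dd^c\f_\epsilon$ is $(\omega,m)$-positive; as $\f_\epsilon$ is smooth this is the first assertion. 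For uniform convergence I use that $\f$ is continuous on the compact $X$, hence uniformly continuous, and that $(g,x)\mapsto g^{-1}\cdot x$ is continuous. Fix $\delta>0$; there is a neighbourhood $V$ of $e$ in $K$ with $|\f(g^{-1}\cdot x)-\f(x)|<\delta$ for all $g\in V$ and all $x\in X$. For $\epsilon$ small enough that $\mathrm{supp}\,\chi_\epsilon\subset V$, the normalization yields
\[ |\f_\epsilon(x)-\f(x)|\leq\int_K|\f(g^{-1}\cdot x)-\f(x)|\,\chi_\epsilon(g)\,dg<\delta \]
uniformly in $x$, which is the claim.

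The uniform-convergence estimate is routine; the crux is the translation lemma, where the $K$-invariance of $\omega$ is indispensable, since without it $\omega+dd^c\psi_g$ would no longer be comparable to $\omega+dd^c\f$ after pullback. The only technical point requiring care is the Fubini interchange of $dd^c$ with the Haar integral, which is legitimate because $\chi_\epsilon$ is smooth and compactly supported while $\f$ is bounded and continuous.
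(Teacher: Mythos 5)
Your proof is correct and follows essentially the same route as the paper: both arguments rest on the $K$-invariance of $\omega$, the fact that pulling back the test forms by the group action preserves $(\omega,m)$-positivity, and a Fubini interchange between the Haar integral and the integral over $X$ to pass positivity to the average $\f_{\epsilon}$. The differences are purely organizational---you isolate the translation lemma ($\f\circ L_{g^{-1}}$ is $(\omega,m)$-subharmonic) and work with pullbacks of currents directly, whereas the paper runs one chain of integrations by parts against a positive test function (and accordingly restricts to closed test forms), and you spell out the uniform-convergence step that the paper dismisses in one line.
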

\begin{proof}
The uniform convergence always holds for continuous functions.
Let us show the second assertion. Let $\alpha_1,...,\alpha_{m-1}$ be $(\omega,m)$-positive closed (1,1)-forms on $X$, and denote (for short) $\alpha=\alpha_1\wedge...\wedge \alpha_{m-1}.$ Let $\Lc_g$ denote the left action of $g\in K$, i.e. 
$$
\Lc_g(x)=g.x, \ \ x\in X.
$$
Then $\Lct_g\alpha_j$ is also $(\omega,m)$-positive for every $j,$ since $\Lct_{g^{-1}} \omega=\omega,$ and
$$
\Lct_{g^{-1}} (\Lct_g\alpha ^k \wedge \omega^{n-k})= \alpha^k\wedge \omega^{n-k}.
$$  
Fix a positive test function $\psi.$ We  have
\begin{eqnarray*}
&&\int_X \psi (\omega+dd^c \f_{\epsilon})\wedge \alpha\wedge \omega^{n-m}= \int_X \psi \alpha\wedge\omega^{n-m+1}+ \int_X \f_{\epsilon} dd^c \psi\wedge \alpha\wedge  \omega^{n-m}\\
&&\, \,\,\,\, \,\,\,=\int_X \psi \alpha\wedge\omega^{n-m+1}+ \int_X\Big(\int_K\Lct_{g} \f \chi_{\epsilon}(g) dg\Big)dd^c \psi\wedge \alpha\wedge  \omega^{n-m}\\
&&\, \,\,\,\, \,\,\,=\int_X \psi \alpha\wedge\omega^{n-m+1}+ \int_K\Big(\int_X\Lct_{g} \f  dd^c \psi\wedge \alpha\wedge  \omega^{n-m}\Big)\chi_{\epsilon}(g)dg\\
&&\, \,\,\,\, \,\,\,=\int_K\Big(\int_X\psi (\omega+dd^c \Lct_g \f)  \wedge\alpha\wedge\omega^{n-m}\Big)\chi_{\epsilon}(g)dg\\
&&\, \,\,\,\, \,\,\,=\int_K\Big(\int_X \psi (\omega+\Lct_g dd^c \f)\wedge \alpha\wedge  \omega^{n-m}\Big)\chi_{\epsilon}(g)dg\\
&&\, \,\,\,\, \,\,\,=\int_K\Big(\int_X \psi \Lct_g\big[(\omega+dd^c\f)\wedge\Lct_{g^{-1}}\alpha\wedge\omega^{n-m}\big]\Big)\chi_{\epsilon}(g)dg\geq 0.
\end{eqnarray*}
\end{proof}
\begin{rmk}
Thanks to Theorem \ref{thm: reg}, every continuous $(\omega,m)$-subharmonic function belongs to $\Pm(X,\omega).$
\end{rmk}
\noindent {\bf Proof of Theorem B.} Let $\f$ be the unique continuous solution to (\ref{eq: He}). For $h\in K$, let $\f_h(x):=\f(h.x), \ x\in X.$ If $u$ is smooth then 
$$
\Vert u_h-u\Vert^2_{L^2} \leq Cdist^2(h,e)\int_X (-u)dd^c u\wedge \omega^{n-1},
$$
where $C$ is some universal constant. Then, it follows from the approximation theorem (Theorem \ref{thm: reg}) that 
$$
\Vert \f_h-\f\Vert_{L^2(X)}\leq Cdist(h,e).
$$
For fixed $h\in K$, observe that $\f_h$ is $(\omega,m)$-subharmonic and satisfies
$$
H_m(\f_h)=F(h.x,\f(h.x))\omega^n.
$$
Thus, by applying Theorem \ref{thm: Lr-Linfty} with $r=2$ we obtain
$$
\Vert \f_h-\f\Vert_{L^{\infty}} \leq C'.\Vert \f_h-\f\Vert_{L^2(X)}^{\gamma},
$$
where $0<\gamma<\frac{2(mp-n)}{mnp+2mp-2n}$ is a given constant and $C'>0$ is another constant which does not depend on $h.$ We thus get
$$
\Vert \f_h-\f\Vert_{L^{\infty}(X)}\leq C.C'dist (h,e)^{\gamma}, \ \forall h\in K.
$$ 
This yields the $\gamma$-H\"{o}lder continuity of $\f$ (see \cite{EGZ09}).

 \vskip .2cm

Lu Hoang Chinh

Laboratoire Emile Picard

UMR 5580, Universit\'e Paul Sabatier

118 route de Narbonne

31062 TOULOUSE Cedex 04 (FRANCE)

lu@math.univ-toulouse.fr  

\begin{thebibliography}{99}
\bibitem[BT76]{BT76} E. Bedford, B. A. Taylor, {\em The Dirichlet problem for a complex Monge-Amp\`ere equation}, Invent. Math. 37 (1976), no. 1, 1-44. 
\bibitem[BT82]{BT82} E. Bedford, B. A. Taylor, {\em A new capacity for
plurisubharmonic functions}, Acta Math. 149 (1982), no. 1-2, 1-40. 
\bibitem[BGZ08]{BGZ08} S. Benelkourchi, V. Guedj, A. Zeriahi,  {\em A priori estimates for weak solutions of complex Monge-Amp\`ere equations}, Ann. Sc. Norm. Super. Pisa Cl. Sci. (5) 7 (2008), no. 1, 81-96. 
\bibitem[Bl03]{Bl03} Z. Blocki, {\em Uniqueness and stability  for the complex Monge-Amp\`ere equation on compact K\"{a}hler manifold}, Indiana Univ. Math. J. 52 (2003), no. 6, 1697-1701. 
\bibitem[Bl05]{Bl05} Z. Blocki, {\em Weak solutions to the complex Hessian equation}, Ann. Inst. Fourier (Grenoble) 55 (2005), no. 5, 1735-1756.
\bibitem[Bl12]{Bl12} Z. Blocki, {\em The Monge-Amp\`ere equation on compact K\"{a}hler manifolds}, Lect. Notes in Mathematics 238 (2012).
\bibitem[BK07]{BK07} Z. Blocki, S. Kolodziej, {\em On regularization of plurisubharmonic functions on manifolds}, Proc. Amer. Math. Soc. 135 (2007), no. 7, 2089-2093.
\bibitem[CNS85]{CNS85} L. Caffarelli, L. Nirenberg, J. Spruck, {\em The Dirichlet problem for nonlinear second order elliptic equations, III: Functions of the eigenvalues of the Hessian}, Acta Math. 155 (1985), 261-301.
\bibitem[CW01]{CW01} K.-S. Chou and X.-J. Wang, {\em Variational theory for Hessian equations}, Comm. Pure Appl. Math., 54 (2001), 1029-1064.
\bibitem[De92]{De92} J.-P. Demailly, {\em Regularization of closed positive currents and intersection theory}, J. Alg. Geom. 1 (1992), 361-409. 
\bibitem[Di09]{Di09} S. Dinew, {\em An inequality for mixed Monge-Amp\`ere measures}, Math. Zeit. 262 (2009), 1-15.
\bibitem[DK11]{DK11} S. Dinew, S. Kolodziej,  {\em A priori estimates for complex Hessian equations}, arXiv:1112.3063v1.
\bibitem[DK12]{DK12} S. Dinew, S. Kolodziej, {\em Liouville and Calabi-Yau type theorems for complex Hessian equations}, arXiv:1203.3995v1.
\bibitem[EGZ09]{EGZ09} P. Eyssidieux, V. Guedj, A. Zeriahi, {\em Singular K\"{a}hler Einstein metrics}, Journal of the American Mathematical Society, Volume 22, Number 3,  (2009), 607-639.
\bibitem[Ga59]{Ga59} L. G{\aa}rding, {\em An inequality for Hyperbolic Polynomials}, J. Math. Mech. 8 (1959) 957-965. 
\bibitem[GL96]{GL96} B. Guan, Y.-Y. Li {\em Monge-Amp\`ere equations on Riemannian manifolds}, J. Differential Equations 132 (1996), no. 1, 126-139. 
\bibitem[G99]{G99} V. Guedj, {\em Approximation of currents on complex manifolds}, Math. Ann. 313 (1999), no. 3, 437-474.  
\bibitem[GKZ08]{GKZ08} V. Guedj, S. Kolodziej, A. Zeriahi, {\em H\"{o}lder continuous solutions to Monge-Amp\`ere equations}, Bull. Lond. Math. Soc. 40 (2008), no. 6, 1070-1080. 
\bibitem[GZ05]{GZ05} V. Guedj, A. Zeriahi, {\em Intrinsic capacities on compact K\"{a}hler manifolds}, J. Geom. Anal. 15 (2005), no. 4, 607-639. 
\bibitem[GZ07]{GZ07} V. Guedj, A. Zeriahi, {\em The weighted Monge-Amp\`ere energy of quasiplurisubharmonic functions}, J. Funct. Anal. 250 (2007), no. 2, 442–482.
\bibitem[H09]{H09} Z. Hou, {\em Complex Hessian equation on K\"{a}hler manifold}, Int. Math. Res. Not. IMRN (2009),  no. 16, 3098-3111.
\bibitem[HMW10]{HMW10} Z. Hou, X. Ma, D.-M. Wu, {\em A second order estimate for complex Hessian equations on a compact K\"{a}hler manifold}, Math. Res. Lett. 17 (2010), no. 3, 547-561.
\bibitem[Hu94]{Hu94} A. Huckleberry, {\em Subvarieties of homogeneous and almost homogeneous manifolds}, Contributions to complex analysis and analytic geometry, 189-232,  Aspects Math., E26, Friedr. Vieweg, Braunschweig, 1994. 
\bibitem[ITW04]{ITW04} N. Ivochkina, N. S. Trudinger, X.-J. Wang, {\em The Dirichlet problem for degenerate Hessian equations}, Comm. Partial Diff. Equations 29 (2004), 219-235.
\bibitem[Jb10]{Jb10} A. Jbilou, {\em Equations hessiennes complexes sur des vari\'et\'es k\"{a}hl\'eriennes compactes}, C. R. Math. Acad. Sci. Paris 348 (2010), no. 1-2, 41-46.
\bibitem[Kok10]{Kok110} V. N. Kokarev, {\em Mixed volume forms and a complex equation of Monge-Amp\`ere type on K\"{a}hler manifolds of positive curvature}, Izv. RAN. Ser. Mat. 74:3 (2010), 65-78.
\bibitem[Kol98]{Kol98} S. Kolodziej, {\em The complex Monge-Amp\`ere equation}, Acta Math. 180 (1998) 69-117.
\bibitem[Kol02]{Kol02} S. Kolodziej, {\em Equicontinuity of families of plurisubharmonic functions with bounds on their Mone-Amp\`ere masses}, Math. Z. 240 (2002), no. 4, 835–847.
\bibitem[Kol03]{Kol03} S. Kolodziej, {\em The Monge-Amp\`ere equation on compact K\"{a}hler manifolds} Indiana Univ. Math. J. 52 (2003) 667-686.
\bibitem[Kol05]{Kol05} S. Kolodziej, {\em The complex Monge-Amp\`ere equation and  theory}, Memoirs Amer. Math. Soc. 178 (2005) 64p.
\bibitem[Kr95]{Kr95} N. V. Krylov, {\em On the general notion of fully nonlinear second-order elliptic equations}, Trans. Amer. Math. Soc. 347 (1995), 857-895.
\bibitem[La02]{La02} D. Labutin, {\em Potential estimates for a class of fully nonlinear elliptic equations}, Duke Math. J. 111 (2002), 1-49.
\bibitem[Li04]{Li04}S.-Y. Li, {\em On the Dirichlet problems for symmetric function equations of the eigenvalues of the complex Hessian}, Asian J. Math. 8 (2004),no. 1, 87-106. 
\bibitem[Lit63]{Lit63} W. Littman, {\em Generalized subharmonic functions: Monotonic approximations and an improved maximum principle}, Ann. Scuola Norm. Sup. Pisa (3) 17 (1963) 207-222.
\bibitem[SA12]{SA12} A. S. Sadullaev, B. I. Abdullaev, {\em Capacities and Hessians in a class of m-subharmonic functions}, preprint arXiv:1201.6531.
\bibitem[Tr95]{Tr95} N. S. Trudinger, {\em On the Dirichlet problem for Hessian equations}, Acta Math. 175 (1995), 151-164.
\bibitem[TW99]{TW99} N. S. Trudinger, X.-J. Wang, {\em Hessian measures II}, Ann. of Math. 150 (1999), 579-604.
\bibitem[Ur01]{Ur01} J. Urbas, {\em An interior second derivative bound for solutions of Hessian equations}, Calc. Var. PDE (12) (2001), 417-431.
\bibitem[Y78]{Y78} S.-T. Yau,  {\em On the Ricci curvature of a compact K\"{a}hler manifold and the complex Monge-Amp\`ere equation} Comm. Pure Appl. Math. 31 (1978), no. 3, 339-411.
\bibitem[W09]{W09} X.-J. Wang, {\em The k-Hessian equation}, Lecture Notes in Math., 1977, Springer, Dordrecht, 2009.
\end{thebibliography}
  \end{document}